\crefname{equation}{}{}
\newtheorem{theorem}{Theorem}[section]
\newcommand{\theoremname}{testing}
\newtheorem{lemma}[theorem]{Lemma}
\newtheorem{proposition}[theorem]{Proposition}
\newtheorem{conjecture}[theorem]{Conjecture}
\newtheorem*{question*}{Question}
\theoremstyle{definition}
\newtheorem{definition}[theorem]{Definition}
\theoremstyle{remark}
\newtheorem{remark}[theorem]{Remark}
\theoremstyle{plain}
\providecommand{\theoremname}{Theorem}
\DeclareMathOperator{\Span}{span}
\DeclareMathOperator{\rank}{rank}
\title{On the number of Hadamard matrices via anti-concentration}
\author{Asaf Ferber \thanks{Massachusetts Institute of Technology. Department of Mathematics. Email: {\tt ferbera@mit.edu}. Research is partially supported by NSF DMS-6935855.} \and 
Vishesh Jain\thanks{Massachusetts Institute of Technology. Department of Mathematics. Email: {\tt visheshj@mit.edu}. Research is partially supported by NSF CCF 1665252 and DMS-1737944 and ONR N00014-17-1-2598.} \and
Yufei Zhao \thanks{Massachusetts Institute of Technology. Department of Mathematics. Email: {\tt yufeiz@mit.edu}. Research is partially supported by NSF DMS-1362326 and DMS-1764176.}}
\date{}
\begin{document}
\maketitle
\global\long\def\R{\mathbb{R}}

\global\long\def\S{\mathbb{S}}

\global\long\def\Z{\mathbb{Z}}

\global\long\def\C{\mathbb{C}}

\global\long\def\Q{\mathbb{Q}}

\global\long\def\N{\mathbb{N}}

\global\long\def\P{\mathbb{P}}

\global\long\def\F{\mathbb{F}}

\global\long\def\U{\mathcal{U}}

\global\long\def\V{\mathcal{V}}

\global\long\def\E{\mathbb{E}}

\global\long\def\bad{\text{BAD}}

\global\long\def\vbad{\text{VBAD}}

\global\long\def\KL{\text{KL}}

\global\long\def\comp{\text{Comp}}

\global\long\def\rk{\text{rank}}

\global\long\def\A{\mathcal{A}}

\global\long\def\L{\mathcal{L}}

\begin{abstract}
Many problems in combinatorial linear algebra require upper bounds on the number of solutions to an underdetermined system of linear equations $Ax = b$, where the coordinates of the vector $x$ are restricted to take values in some small subset (e.g. $\{\pm 1\}$) of the underlying field. The classical ways of bounding this quantity are to use either a rank bound observation due to Odlyzko or a vector anti-concentration inequality due to Hal\'asz. The former gives a stronger conclusion except when the number of equations is significantly smaller than the number of variables; even in such situations, the hypotheses of Hal\'asz's inequality are quite hard to verify in practice. In this paper, using a novel approach to the anti-concentration problem for vector sums, we obtain new Hal\'asz-type inequalities which beat the Odlyzko bound even in settings where the number of equations is comparable to the number of variables. In addition to being stronger, our inequalities have hypotheses which are considerably easier to verify. We present two applications of our inequalities to combinatorial (random) matrix theory: (i) we obtain the first non-trivial upper bound on the number of $n\times n$ Hadamard matrices, and (ii) we improve a recent bound of Deneanu and Vu on the probability of normality of a random $\{\pm 1\}$ matrix.     
\end{abstract}


\section{Introduction}


\subsection{The number of Hadamard matrices}
A square matrix $H$ of order $n$ whose entries are $\{\pm{1}\}$ is called a \emph{Hadamard matrix of order $n$} if its rows are pairwise orthogonal i.e. if $ HH^{T} = nI_{n}$. 
They are named after Jacques Hadamard, who studied them in connection with his maximal determinant problem. Specifically, Hadamard asked for the maximum value of the determinant of any $n \times n$ square matrix all of whose entries are bounded in absolute value by $1$. He proved \cite{hadamard1893resolution} that the  value of the determinant of such matrices cannot exceed $n^{n/2}$. Moreover, he showed that Hadamard matrices are the only ones that can attain this bound. Since their introduction, Hadamard matrices have been the focus of considerable attention from many different communities -- coding theory, design theory, statistical inference, and signal processing to name a few. We refer the reader to the surveys \cite{hedayat1978hadamard, seberry1992hadamard} and the books \cite{agaian2006hadamard,horadam2012hadamard} for a comprehensive account of Hadamard matrices and their numerous applications.  

Hadamard matrices of order $1$ and $2$ are trivial to construct, and it is quite easy to see, by considering the first few rows, that every other Hadamard matrix (if exists) must be of order $4m$ for some $m\in \N$. Whereas Hadamard matrices of infinitely many orders have been constructed, the question of whether one of order $4m$ exists for every $m\in \N$ is the most important open question on this topic, and remains wide open.

\begin{conjecture}[The Hadamard conjecture, \cite{paley1933orthogonal}]
There exists a Hadamard matrix of order $4m$ for every $m\in \N$. 
\end{conjecture}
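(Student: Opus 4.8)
The plan is, regrettably, the one that has resisted every attempt for roughly a century: the statement above is the \emph{Hadamard conjecture}, which is wide open, so I cannot offer a genuine proof --- only a description of the constructive program by which one would attack it. The idea is to produce explicit Hadamard matrices covering every order $4m$ by assembling the classical toolkit. The first move is multiplicativity: Sylvester's observation $H_{2n}=H_n\otimes H_2$ handles all powers of $2$, and more generally the Kronecker product shows that if Hadamard matrices of orders $a$ and $b$ exist, then so does one of order $ab$; this reduces the problem to orders of the form $4p$ with $p$ an odd prime. For those one invokes Paley's constructions, built from the quadratic-residue character of a finite field $\F_q$: when $q\equiv 3\pmod 4$ one gets a Hadamard matrix of order $q+1$, and when $q\equiv 1\pmod 4$ one gets a symmetric conference matrix and hence a Hadamard matrix of order $2(q+1)$. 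Already these two families settle a positive-density set of admissible orders.

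The second step is to layer on the ``plug-in'' constructions that fill most of the remaining gaps: Williamson's method, which seeks four symmetric circulant $\{\pm1\}$ matrices $A,B,C,D$ of order $m$ with $A^{2}+B^{2}+C^{2}+D^{2}=4mI_{m}$ and assembles them into an order-$4m$ Hadamard matrix; its extensions through Baumert--Hall arrays and Goethals--Seidel and Turyn-type arrays; constructions from Golay sequences, $T$-sequences, and base sequences; and skew-Hadamard and amicable families that again multiply together. Running parallel to these are the \emph{asymptotic} existence theorems --- Seberry's result that a Hadamard matrix of order $2^{t}m$ exists whenever $2^{t}$ is large enough relative to $m$, and the density statements of de Launey and of de Launey--Gordon showing that the proportion of $m\le N$ for which an order-$4m$ Hadamard matrix is known tends to $1$ as $N\to\infty$. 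One would push all of these simultaneously, hoping that between the multiplicative reductions and the explicit sequence constructions no order is left uncovered.

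The main obstacle --- and precisely the reason the conjecture remains open --- is that none of these constructions, nor any known finite combination of them, reaches \emph{every} order $4m$: each leaves an infinite (if sparse) set of orders untouched, and because the constructions combine multiplicatively rather than additively, a gap at a prime-related order $4p$ cannot be bridged by the Kronecker trick. Historically the program has advanced by attrition, eliminating the smallest open order one at a time --- order $428$, for instance, was settled only in 2005 by an extensive computer search for suitable Williamson/Turyn-type sequences, and a handful of larger orders remain stubbornly open --- with no mechanism in sight that would terminate the process. A real proof seems to demand a genuinely new idea: either a single construction parametrized uniformly by all $m\in\N$, or a non-constructive existence argument. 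One might even ask, in the spirit of the anti-concentration methods of this paper, whether probabilistic or entropy techniques could ever locate a Hadamard matrix among the $2^{n^{2}}$ sign matrices of order $n$; but the orthogonality constraints are far too rigid for current second-moment or entropy tools, and I would fully expect to get stuck exactly where everyone else has.
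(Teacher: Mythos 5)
You correctly recognized that this is the Hadamard conjecture, a famously open problem that the paper merely \emph{states} and does not (and could not) prove; there is therefore no proof in the paper to compare against, and declining to fabricate one is the right call. Your survey of the constructive program --- Sylvester/Kronecker multiplicativity, the Paley constructions over $\F_q$, Williamson's method and its Baumert--Hall, Goethals--Seidel, and Turyn-array extensions, Golay and $T$-sequences, Seberry's asymptotic result, the de Launey--Gordon density theorem, and the 2005 resolution of order $428$ --- is an accurate account of the state of the art, and your diagnosis that the obstruction is the multiplicative (rather than additive) nature of these constructions, leaving sparse but infinite families of orders uncovered, is exactly the standard understanding of why the problem remains open. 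The closing remark about anti-concentration and entropy methods is a reasonable speculation but, as you say, currently far out of reach: the paper at hand uses such tools only to give an \emph{upper} bound on the number of Hadamard matrices, which is the opposite direction from an existence proof.
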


In this paper, we study the question of \emph{how many} Hadamard matrices of order $n=4m$ could possibly exist for a given $m\in \N$. Let us denote this number by $H(n)$. Note that if a single Hadamard matrix of order $n$ exists, then we immediately get at least $(n!)^{2}$ distinct Hadamard matrices by permuting all the rows and columns. Thus, if the Hadamard conjecture is true, then $H(n) = 2^{\Omega( n\log{n})}$ for every $n=4m, m\in \N$. On the other hand, the bound $H(n) \leq 2^{\binom{n+1}{2}}$ is quite easy to obtain, as we will discuss in the next subsection. 

This bound also appeared in the work of de Launey and Levin \cite{de2010fourier} on the enumeration of partial Hadamard matrices (i.e. $k\times 4m$ matrices whose rows are pairwise orthogonal, in the limit as $m\to \infty$) using Fourier analytic techniques; notably, while they were able to get a very precise answer to this problem (up to an overall $(1+o(1))$ multiplicative factor), their techniques still did not help them to obtain anything better than the essentially trivial bound for the case of square Hadamard matrices. As our first main result, we give the only known non-trivial upper bound on the number of square Hadamard matrices.  
\begin{theorem}
\label{thm:hadamard}
There exists an absolute constant $c_H > 0$ such that $H(n)\leq 2^{\frac{(1-c_H)n^2}{2}}$ for all sufficiently large $n$ that is a multiple of 4. 
\end{theorem}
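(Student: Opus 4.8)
The plan is to bound $H(n)$ by counting, row by row, how many ways a Hadamard matrix can be built, and to exploit the fact that once enough rows have been chosen, each subsequent row is heavily constrained by the orthogonality relations. Concretely, fix $k = \alpha n$ for a small constant $\alpha > 0$ and split the count as follows: there are at most $2^{kn}$ choices for the first $k$ rows, and then for each of the remaining $n - k$ rows we must count the number of $\{\pm 1\}$ vectors orthogonal to all $k$ already-chosen rows. If $R \in \{\pm1\}^{k \times n}$ is the matrix of the first $k$ rows, a later row $v$ must satisfy $Rv = 0$, which is exactly an underdetermined linear system of the type discussed in the abstract. The trivial rank bound (Odlyzko) gives at most $2^{n-k}$ solutions, reproducing essentially $2^{n^2/2}$; the point is to do better using the new Hal\'asz-type anti-concentration inequalities, which should show that for a \emph{typical} (or structured-enough) choice of $R$, the number of solutions is at most $2^{(1-c)(n-k)}$ for some absolute $c>0$.

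The key steps, in order, would be: (1) choose $k = \alpha n$ with $\alpha$ small enough that $2^{kn}$ contributes only a $2^{\alpha n^2}$ factor, which we can afford to lose; (2) for the first $k$ rows, observe that they themselves form a $k \times n$ partial Hadamard matrix, but crucially also that they contain a large submatrix with good anti-concentration properties — more precisely, I would argue that among the first $k$ rows one can find $\Theta(n)$ of them that are "spread out" enough (e.g., no large subset of coordinates dominates) so that Hal\'asz's inequality applies to the linear map they define; (3) apply the new anti-concentration inequality to conclude that the number of $\{\pm1\}$ vectors $v$ with $Rv = 0$ is at most $2^{(1-c_H')(n-k)}$, uniformly over all admissible $R$ (this uniformity is what lets us just multiply the counts); (4) combine: $H(n) \le 2^{\alpha n^2} \cdot \big(2^{(1-c_H')(n-k)}\big)^{n-k} \le 2^{\alpha n^2 + (1-c_H')(1-\alpha)^2 n^2}$, and check that for $\alpha$ small this exponent is at most $(1-c_H) n^2/2$ for a suitable $c_H > 0$.

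The main obstacle I anticipate is step (2)–(3): verifying that the hypotheses of the anti-concentration inequality hold \emph{for every} choice of the first $k$ rows, not just a random one. A fixed adversarial partial Hadamard matrix could in principle have rows that are "clustered" (concentrated on few coordinates) in a way that weakens anti-concentration; one must show the orthogonality constraints themselves force enough richness. The paper's emphasis that its new inequalities "have hypotheses which are considerably easier to verify" strongly suggests this is exactly where the novelty is used — presumably the new Hal\'asz-type bound only requires something like a lower bound on the number of rows of $R$ that are not nearly proportional to a sparse vector, or a bound in terms of the least common denominator / largest atom of the individual rows, which the Hadamard structure supplies automatically. I would also need to handle the degenerate case where $Rv = 0$ has no solutions at all (then that row contributes nothing and the bound is trivially fine), and make sure the counting is genuinely an upper bound, i.e. that every Hadamard matrix is counted at least once in this row-by-row scheme, which it is since we range over all valid prefixes and all valid extensions.

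One more point worth building into the argument: rather than peeling off a single block of $k$ rows, it may be cleaner to peel rows off one at a time and track a running rank/structure invariant, so that the anti-concentration bound for row $i+1$ is applied to the linear system given by rows $1, \dots, i$; since $i$ grows, the savings compound, and summing $\sum_{i=\alpha n}^{n} (1 - c)(\text{something})$ gives the same order of savings while possibly yielding a cleaner constant. Either way, the final inequality $H(n) \le 2^{(1-c_H)n^2/2}$ follows by optimizing the free parameter $\alpha$ (or the threshold at which the anti-concentration bound kicks in) against the absolute constant coming from the Hal\'asz-type inequality stated earlier in the paper.
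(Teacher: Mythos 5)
Your overall strategy (row-by-row counting, replacing Odlyzko's bound by an anti-concentration bound for the linear system defined by already-chosen rows) is the same as the paper's, but there are two substantive gaps.

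First, the arithmetic in your combining step (4) does not actually give $2^{(1-c_H)n^2/2}$. You peel off a fixed block of $k = \alpha n$ rows (cost $2^{kn}$) and then bound each of the remaining $n-k$ rows by $2^{(1-c')(n-k)}$, a \emph{constant} improvement over Odlyzko with a fixed constraint matrix $R$. That yields $2^{\alpha n^2 + (1-c')(1-\alpha)^2 n^2}$, which for small $\alpha$ is about $2^{(1-c')n^2}$. This does not even beat the trivial bound $2^{\binom{n+1}{2}} \approx 2^{n^2/2}$ unless $c' > 1/2$ — i.e., unless the Hal\'asz-type inequality cuts the solution count to roughly $2^{(n-k)/2}$, which is far stronger than what the paper's \cref{thm:halasz-int} delivers. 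The paper instead applies the bound with the constraint system growing one row at a time: at step $i$ it uses the full $i \times n$ system (so Odlyzko gives $2^{n-i}$), and improves this to $2^{n-(1+C)i}$ for $i$ in a range like $[c_1 n, c_2 n]$; the compounding saving is then $\sum_i C i = \Theta(n^2)$, which is what beats $n^2/2$. You gesture at this in your last paragraph, but incorrectly assert the two schemes give ``the same order of savings'' — they do not; only the growing-system version works.

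Second, your steps (2)–(3) — verifying the anti-concentration hypotheses \emph{for every} partial Hadamard $R$ — are exactly where the paper's novelty lies, and your proposed mechanism (finding $\Theta(n)$ rows of $R$ that are ``spread out'') is aimed at the wrong object. The linear system $Rv = 0$ over $v \in \{\pm1\}^n$ is a signed sum of the \emph{columns} of $R$, so what must be verified is a richness property of the columns. The paper's key deterministic observation is that since $H_{k,n}H_{k,n}^T = n I_k$ exactly, Cauchy--Binet gives $\sum_{A} \det(A)^2 = n^k$ over all $k\times k$ column-submatrices $A$, while each $\det(A)^2 \le k^k$ by Hadamard's inequality, forcing at least $(n/k)^k$ nonzero $k\times k$ minors. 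A greedy argument then extracts an $(r,\ell)$-rank partition of the columns with $r = \Theta(k)$ and $\ell$ growing, which is precisely the hypothesis of \cref{thm:halasz-int}. Without this column-based Cauchy--Binet argument, the worry you raise about adversarial, ``clustered'' $R$ has no resolution; with it, the hypothesis is verified uniformly, not just for typical $R$.
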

\begin{remark}
In our proof of the above theorem, we have focused on the simplicity and clarity of presentation and have made no attempt to optimize this constant, since our proof cannot give a value of $c_H$ larger than (say) $\frac{1}{2}$ whereas we believe that the correct value of $c_H$ should be close (as a function of $n$) to $1$. 
\end{remark}

\begin{conjecture}
For any $n=4m, m\in \N$, $H(n) = 2^{O(n\log{n})}$. 
\end{conjecture}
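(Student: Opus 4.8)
The plan would be to make the count probabilistic and then push an anti-concentration argument as far as it will go. Write $H(n)=2^{n^{2}}\cdot\Pr\big[MM^{T}=nI_{n}\big]$ for $M$ uniform in $\{\pm1\}^{n\times n}$; the conjecture is then equivalent to the anti-concentration bound $\Pr[MM^{T}=nI_{n}]\le n^{O(n)}\,2^{-n^{2}}$. Since the Hadamard equivalence group (row and column permutations together with sign changes of rows and columns) has order $2^{\Theta(n\log n)}$, it is also equivalent to the assertion that there are at most $2^{O(n\log n)}$ pairwise inequivalent Hadamard matrices of order $n$; the few values that have been computed ($1,1,1,1,1,5,3,60,487,13710027$ for orders $1,2,4,8,12,16,20,24,28,32$) are at least consistent with this. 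The natural attack is to expose the rows $r_{1},\dots,r_{n}$ of $M$ one at a time: letting $\pi_{k}$ be the maximum, over all $k$-row partial Hadamard matrices $P$, of the number of $\{\pm1\}$ vectors orthogonal to every row of $P$, one has $H(n)\le\prod_{k=0}^{n-1}\pi_{k}$. Two ranges are already controlled. For the last $j=O(\sqrt{n\log n})$ rows the rank bound of Odlyzko suffices: those rows all lie in a $j$-dimensional subspace, which meets $\{\pm1\}^{n}$ in at most $2^{j}$ points, so they contribute only $2^{j^{2}}=n^{O(n)}$. For the first $k\le n^{1-\varepsilon}$ rows one would hope to prove that a random partial Hadamard matrix is ``unstructured'' with overwhelming probability, so that the Halász-type inequalities of this paper give $\pi_{k}\le 2^{n}(C/\sqrt n)^{k}$, a harmless contribution.

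The obstruction lives in the bulk $\varepsilon n\le k\le(1-\varepsilon)n$, and — crucially — it is intrinsic to the row-by-row strategy rather than a defect of the inequalities used. Sylvester-type tensor constructions produce genuine partial Hadamard matrices with $k$ rows whose orthogonal complement already meets $\{\pm1\}^{n}$ in $2^{n-k}$ points: for $8\mid n$ the first $n/2$ rows of $H_{2}\otimes H_{n/2}$ form such a matrix, with complement $\{(w\mid -w):w\in\R^{n/2}\}$, which contains exactly $2^{n/2}$ sign vectors. Thus $\pi_{n/2}=2^{n/2}$ (the Odlyzko bound is attained), and since the extensions of a $(k{+}1)$-row partial Hadamard matrix are a subset of the extensions of its $k$-row prefix, $\pi_{k}$ is non-increasing in $k$, so $\pi_{k}\ge\pi_{n/2}=2^{n/2}$ for all $k\le n/2$; hence $\prod_{k=0}^{n-1}\pi_{k}\ge 2^{n^{2}/4}$ for these orders $n$. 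In other words, any argument that reveals rows sequentially and applies a single-row anti-concentration estimate — essentially all that Odlyzko's bound and Halász-type inequalities (including those behind Theorem~\ref{thm:hadamard}) deliver — yields an upper bound that is itself at least $2^{n^{2}/4}$, and therefore cannot reach $2^{o(n^{2})}$, let alone $2^{O(n\log n)}$. This is precisely the phenomenon flagged in the Remark after Theorem~\ref{thm:hadamard}, and it is why that method stalls at $c_{H}\le\tfrac12$.

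The missing ingredient is that the $2^{n/2}$ completions of such a half-Hadamard matrix are wildly dependent: adjoining even one further row collapses the number of ways to finish by an exponential factor, because each new orthogonal $\{\pm1\}$ row must also be orthogonal to all of its successors. Capturing this dependence is, I expect, the crux — a proof of the conjecture must constrain many rows of $M$ simultaneously, so that the correlations among the orthogonality events are exploited rather than thrown away by a union bound over completions. A complementary and, in my view, more promising route is to bypass the enumeration entirely and bound the number of Hadamard-inequivalence classes directly, using the algebraic and design-theoretic structure of Hadamard matrices (the associated symmetric $2$-$(4m-1,2m-1,m-1)$ design, or the cocyclic and group-developed descriptions available for many families) to show that such a matrix is pinned down by only $O(n\log n)$ bits of combinatorial data. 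We are not aware of a technique that delivers either, and we leave the conjecture open.
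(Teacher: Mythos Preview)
The statement you were asked to address is a \emph{conjecture}, not a theorem: the paper does not prove it and explicitly says that even the weaker bound $H(n)=2^{o(n^{2})}$ ``will likely require new ideas.'' Your write-up is not a proof either, and you say so clearly at the end; in that sense you and the paper agree.

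What you have added beyond the paper is a concrete explanation of \emph{why} the row-by-row strategy underlying \cref{thm:hadamard} cannot reach $2^{o(n^{2})}$. The Sylvester example is correct and sharp: for orders $n$ with $8\mid n$ and $H_{n/2}$ a Hadamard matrix, the first $n/2$ rows of $H_{2}\otimes H_{n/2}$ form a genuine partial Hadamard matrix whose orthogonal complement $\{(w\mid -w)\}$ contains exactly $2^{n/2}$ sign vectors, so the per-row bound $\pi_{k}$ cannot be pushed below $2^{n/2}$ for $k\le n/2$, and the product $\prod_{k}\pi_{k}$ is already at least $2^{n^{2}/4}$. This is exactly the obstruction alluded to in the Remark after \cref{thm:hadamard} (that the method cannot give $c_{H}>1/2$), but the paper does not spell out the example; your discussion makes the barrier explicit and is a genuine contribution to understanding the problem, even though it is not a proof of the conjecture.

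One small quibble: your monotonicity claim ``$\pi_{k}$ is non-increasing in $k$'' is not quite immediate from the definition as stated (the maximiser for $\pi_{k}$ need not extend), but you do not actually need it---taking the first $k\le n/2$ rows of $H_{2}\otimes H_{n/2}$ directly gives $\pi_{k}\ge 2^{n/2}$.
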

We believe that proving a bound of the form $H(n) = 2^{o(n^2)}$ will already be very interesting, and will likely require new ideas. 

\subsubsection{The approach}
\label{subsec-approach}
We now discuss the proof of the trivial upper bound $H(n) \leq 2^{\binom{n+1}{2}}$. The starting point is the following classical (and almost trivial to prove) observation due to Odlyzko. 
\begin{lemma}[Odlyzko, \cite{odlyzko1988subspaces}] \label{odlyzko}Let $W$ be a $d$-dimensional subspace of $\R^n$. Then, $|W\cap \{\pm1\}^n|\leq 2^d$.   
\end{lemma}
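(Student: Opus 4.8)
The plan is to produce an injection from $W\cap\{\pm1\}^n$ into $\{\pm1\}^d$; the bound $|W\cap\{\pm1\}^n|\le 2^d$ then follows immediately by comparing cardinalities. The injection will simply be a coordinate projection onto a well-chosen set of $d$ coordinates.

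Concretely, I would fix a basis $b_1,\dots,b_d$ of $W$ and assemble it as the columns of an $n\times d$ matrix $B$, so that $W=\{Bc:c\in\R^d\}$ and $\rank B=d$. Since the row rank of $B$ equals its column rank, $B$ has $d$ linearly independent rows; let $S\subseteq\{1,\dots,n\}$ with $|S|=d$ index such a set of rows, and let $B_S$ denote the corresponding $d\times d$ submatrix, which is invertible by construction. Now consider the restriction map $\pi_S\colon W\to\R^d$ given by $w=Bc\mapsto w|_S=B_Sc$. Since $B_S$ is invertible, $c=B_S^{-1}(w|_S)$ is recovered from $w|_S$, and hence so is $w=Bc$; thus $\pi_S$ is injective on $W$, and in particular on $W\cap\{\pm1\}^n$.

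To finish, observe that every $w\in W\cap\{\pm1\}^n$ has $\pi_S(w)=w|_S\in\{\pm1\}^S$, a set of cardinality $2^d$. Combining this with the injectivity of $\pi_S$ on $W\cap\{\pm1\}^n$ yields $|W\cap\{\pm1\}^n|\le 2^d$, as desired.

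There is essentially no obstacle here: the only ingredient beyond bookkeeping is the standard fact that a rank-$d$ matrix contains an invertible $d\times d$ submatrix (equivalently, that row rank equals column rank). If one wished to avoid even this, the statement can also be proved by induction on $n$, splitting on whether the projection $\R^n\to\R^{n-1}$ forgetting the last coordinate is injective on $W$: if it is, one passes directly to a $d$-dimensional subspace of $\R^{n-1}$ and applies the inductive hypothesis; if it is not, then a scalar multiple of the last standard basis vector lies in $W$, so the two sign choices in the last coordinate of an element of $W\cap\{\pm1\}^n$ pair up bijectively, and one applies the inductive hypothesis to a $(d-1)$-dimensional subspace of $\R^{n-1}$, picking up exactly the required factor of $2$.
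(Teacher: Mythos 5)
Your main argument is exactly the paper's sketch made rigorous: the paper says a $d$-dimensional subspace ``depends only on $d$ coordinates,'' and your coordinate projection $\pi_S$ via an invertible $d\times d$ submatrix $B_S$ is precisely the formal statement of that fact. Correct, and the same approach; the inductive alternative you mention at the end is a genuinely different (and also valid) route, but the core of your proposal matches the paper.
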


\begin{proof}[Sketch]
As $W$ is a $d$-dimensional space, it depends only on $d$ coordinates. Therefore, it spans at most $2^d$ vectors with entries from $\{\pm1\}$.
\end{proof}
The bound $H(n) \leq 2^{\binom{n+1}{2}}$ is now immediate. Indeed, we construct the matrices row by row, and note that by the orthogonality of the rows, the first $k$ rows span a subspace of dimension $k$ to which the remaining rows are orthogonal. In particular, once the first $k$ rows have been selected, the $(k+1)^{st}$ row lies in a specified subspace of dimension $n-k$ (the orthogonal complement of the vector space spanned by the first $k$ (linearly independent) rows), and hence, by Lemma \ref{odlyzko}, is one of at most $2^{n-k}$ vectors. It follows that $H(n) \leq \prod_{i=0}^{n-1}2^{n-i} = 2^{\binom{n+1}{2}}$.   

The weak point in the above proof is the following -- while Odlyzko's bound is tight in general, we should expect it to be far from the truth in the average case. Indeed, working with vectors in $\{0,1\}^{n}$ for the moment, note that a subspace of dimension $k$ spanned by vectors in $\{0,1\}^{n}$ has exactly $2^{n-k}$ vectors in $\{0,1\}^{n}$ orthogonal to it \emph{viewed as elements of $\F_{2}^{n}$}. However, typically, the inner products will take on many values in $2\Z \setminus \{0\}$ so that many of these vectors will not be orthogonal viewed as elements of $\R^{n}$. 

The study of the difference between the Odlyzko bound and how many $\{\pm 1\}^{n}$ vectors a subspace actually contains has been very fruitful in discrete random matrix theory, particularly for the outstanding problem of determining the probability of singularity of random $\{\pm 1\}$ matrices. Following Kahn, Koml\'os and Szemer\'edi \cite{kahn1995probability}, Tao and Vu \cite{tao2007singularity} isolated the following notion.

\begin{definition}[Combinatorial dimension] The \emph{combinatorial dimension} of a subspace $W$ in $\R^n$, denoted by $d_{\pm}(W)$, is defined to be smallest real number such that 
$$\left|W \cap \{\pm 1\}^{n}\right| \leq 2^{d_{\pm}(W)}.$$
\end{definition}

Thus, Odlyzko's lemma says that for any subspace $W$, its combinatorial dimension is no more than its dimension. However, improving on another result of Odlyzko \cite{odlyzko1988subspaces},
Kahn, Koml\'os and Szemer\'edi showed that this bound is very loose for \emph{typical subspaces spanned by $\{\pm 1\}^{n}$ vectors}: 
\begin{theorem}[Kahn-Koml\'os-Szemer\'edi, \cite{kahn1995probability}]
There exists a constant $C>0$ such that if $r\leq n-C$, and if $v_{1},\dots,v_{r}$
are chosen independently and uniformly from $\{\pm1\}^{n}$, then
\[
\Pr\left[d_{\pm}(\Span\{v_{1},\dots,v_{r}\}) > \log_{2}(2r) \right]=(1+o(1))4{r \choose 3}\left(\frac{3}{4}\right)^{n}.
\]    
\end{theorem}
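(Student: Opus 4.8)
\emph{Sketch of a proof strategy.} The plan is to isolate the dominant \emph{mechanism} by which $d_\pm(W)$, where $W:=\Span\{v_1,\dots,v_r\}$, can exceed $\log_2(2r)$, and to show that every other mechanism is of strictly smaller order. Since $W$ always contains the $2r$ sign vectors $\pm v_1,\dots,\pm v_r$, after discarding the events that some $v_i=\pm v_j$ or that $v_1,\dots,v_r$ are linearly dependent (both of probability $O(r^2 2^{-n})=o(\binom r3(3/4)^n)$), the event $d_\pm(W)>\log_2(2r)$ is exactly the event that $W$ contains a sign vector $w\notin\{\pm v_1,\dots,\pm v_r\}$, necessarily a unique \emph{genuine} combination $w=\sum_{i\in S}\alpha_i v_i$ with all $\alpha_i\neq 0$ and $|S|=m\ge 2$. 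The cheapest such combination is a triple: for distinct $i,j,k$ and $\varepsilon\in\{\pm1\}^3$, one has $\varepsilon_1 v_i+\varepsilon_2 v_j+\varepsilon_3 v_k\in\{\pm1\}^n$ if and only if in every coordinate $\ell$ the triple $\big((v_i)_\ell,(v_j)_\ell,(v_k)_\ell\big)$ avoids the antipodal pair $\{\varepsilon,-\varepsilon\}$, an event of probability $(3/4)^n$. There are $4\binom r3$ such (triple, $\varepsilon$-up-to-global-sign) pairs, so if $Y$ counts the realised ones then $\E[Y]=4\binom r3(3/4)^n$; this is the expected count responsible for the answer.

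For the lower bound I would run the second moment method on $Y$. Since $\E[Y]\to 0$ whenever $r\le n$, it suffices to show $\E[Y(Y-1)]=o(\E[Y])$: then Bonferroni's inequality gives $\Pr[Y\ge 1]\ge \E[Y]-\tfrac12\E[Y(Y-1)]=(1-o(1))\E[Y]$, while trivially $\Pr[Y\ge 1]\le \E[Y]$, and $Y\ge 1$ implies $d_\pm(W)>\log_2(2r)$ up to the negligible events above. To bound $\E[Y(Y-1)]$ I would split the sum over ordered pairs of distinct realised configurations according to the number of vectors they share ($0,1,2$ or $3$) and check, by a short coordinate count in each case, that the joint per-coordinate survival probability is at most $5/8$; since there are only $O(r^6)$ such pairs, $\E[Y(Y-1)]=O(r^6 (5/8)^n)=o(r^3(3/4)^n)=o(\E[Y])$, using $r\le n$.

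For the upper bound, $d_\pm(W)>\log_2(2r)$ forces (up to the negligible events) a genuine combination $w=\sum_{i\in S}\alpha_i v_i\in\{\pm1\}^n$ with $|S|=m\ge 2$, and I would dispose of the small cases by hand. If $m=2$, a case analysis of which sign patterns of $(v_i,v_j)$ occur shows no genuine two-term combination is a sign vector unless $v_i=\pm v_j$, a negligible event. If $m=3$, then requiring $\varepsilon\cdot\alpha\in\{-1,1\}$ for every pattern $\varepsilon$ of $(v_i,v_j,v_k)$ that occurs forces $\alpha\in\{-1,1\}^3$, and since then $\alpha\cdot\alpha=3$ this is impossible unless an antipodal pattern pair of the triple is absent; if exactly one pair is absent we are in an event counted by $Y$, and ``two pattern pairs absent for some triple'' has probability $O(r^3(1/2)^n)=o(\E[Y])$. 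Hence $\Pr[d_\pm(W)>\log_2(2r)]$ is at most $\E[Y]+o(\E[Y])$ plus the probability $p_{\ge4}$ that some genuine combination of at least $4$ of the $v_i$ equals a sign vector, and it remains to show $p_{\ge4}=o(\binom r3(3/4)^n)$.

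I expect this last step to be the main obstacle. A naive union bound is too weak: over the $\binom rm$ choices of support one still must control, for a fixed $m$-set, the probability that some genuine combination is a sign vector, and Odlyzko's Lemma~\ref{odlyzko} only caps the number of sign vectors in the span by $2^m$, which is far too large. The plan is to (i) discretise: by Cramer's rule, a genuine $m$-term relation among sign vectors has coefficient vector $\alpha=M^{-1}w|_S$ for an invertible $\pm1$ submatrix $M$, which by Hadamard's determinant bound forces $\alpha$ into a set of size $m^{O(m^2)}$; and (ii) apply an Erd\H{o}s--Littlewood--Offord type anti-concentration estimate: for any $\alpha\in\R^m$ with all coordinates nonzero and $m\ge 4$, $\Pr_{\varepsilon\in\{\pm1\}^m}\big[\textstyle\sum_i\alpha_i\varepsilon_i\in\{-1,1\}\big]\le 5/8$, with the extremal configuration essentially $\tfrac12(v_{i_1}+v_{i_2}+v_{i_3}-v_{i_4})$ and the bound tending to $1/2$ as $m\to\infty$. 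Together these handle $m=O(\sqrt n)$; for $m$ comparable to $r$ the enumeration in (i) becomes too lossy, and one must instead argue recursively, bounding the number of sign vectors in the span of $m$ generic sign vectors by $2m+O(1)$ with good enough probability --- i.e. proving a quantitative generic strengthening of Lemma~\ref{odlyzko} --- and it is exactly in closing this recursion, and in guaranteeing that the relevant subspaces have large enough codimension for it to be valid, that the hypothesis $r\le n-C$ enters. Granting the $\ge 4$-term estimate and combining with the lower bound yields $\Pr[d_\pm(W)>\log_2(2r)]=(1+o(1))\,4\binom r3(3/4)^n$.
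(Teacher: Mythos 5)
This theorem appears in the paper only as a cited result of Kahn, Koml\'os, and Szemer\'edi \cite{kahn1995probability}; the paper does not prove it, so there is no in-paper argument to compare your proposal against, and it has to be judged on its own terms as a sketch of the KKS result.

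Your sketch correctly locates the main term: for a fixed triple $\{i,j,k\}$ and signs $\varepsilon\in\{\pm1\}^3$, the probability that $\varepsilon_1 v_i+\varepsilon_2 v_j+\varepsilon_3 v_k\in\{\pm1\}^n$ is $(3/4)^n$, and there are $4\binom{r}{3}$ such configurations up to global sign, giving $\E[Y]=4\binom{r}{3}(3/4)^n$. The second-moment/Bonferroni lower bound on $Y$ is the right strategy, and the disposal of $m=2$ (forces $v_i=\pm v_j$, probability $O(r^2 2^{-n})$) and $m=3$ (forces $\alpha\in\{\pm1\}^3$, reducing to the event counted by $Y$) is essentially correct.

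The genuine gap, which you flag yourself as ``the main obstacle'', is the estimate $p_{\ge 4}=o\bigl(\binom r3(3/4)^n\bigr)$, and what you offer for it is a list of ideas rather than a proof. One of the numbers is off: for $m=4$ and $\alpha=\tfrac12(1,1,1,-1)$ the per-coordinate probability is $1/2$, not $5/8$, and for a \emph{fixed} $\alpha$ with all coordinates nonzero the Erd\H{o}s--Littlewood--Offord bound is $O(1/\sqrt{m})$, not something tending to $1/2$. But that is secondary. The real problem is that the coefficient vector $\alpha$ is determined by the random data, so one cannot apply a per-coordinate estimate without paying a union bound over the possible $\alpha$'s; your Cramer/Hadamard discretization makes this affordable only for $m=O(\sqrt n)$, and for $m$ comparable to $r$ you explicitly defer to an unspecified recursion based on a ``quantitative generic strengthening of Odlyzko's lemma''. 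That unproved ingredient --- together with the precise role of the hypothesis $r\le n-C$ in controlling it --- is exactly the technical core of the Kahn--Koml\'os--Szemer\'edi argument. As it stands, the proposal establishes the correct asymptotic heuristically and gives a workable lower bound, but it does not prove the matching upper bound, which is where essentially all of the difficulty lies.
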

In other words, they showed that a typical $r$-dimensional subspace spanned by $r$ vectors in $\{\pm 1\}^{n}$ contains the minimum possible number of $\{\pm 1\}^{n}$ vectors i.e. only the $2r$ vectors consisting of the vectors spanning the subspace and their negatives.

Compared to the setting of Kahn, Koml\'os and Szemer\'edi, our setting has two major differences: 
\begin{enumerate}[(i)]
\item We are interested not in the combinatorial dimension of subspaces spanned by $\{\pm 1\}^{n}$ vectors but of their \emph{orthogonal complements}.
\item The $\{\pm 1\}^{n}$ vectors spanning a subspace in our case are \emph{highly dependent} due to the mutual orthogonality constraint -- indeed, as the proof of the trivial upper bound at the start of the subsection shows, the probability that the rows of a random $k \times n$ $\{\pm1\}$ matrix are mutually orthogonal is $2^{-\Omega(k^{2})}$; this rules out the strategy of conditioning on the rows being orthogonal when $k=\Omega(\sqrt{n})$, even if one were to prove a variant of the result of Kahn, Koml\'os and Szemer\'edi to deal with orthogonal complements.
\end{enumerate}  

Briefly, our approach to dealing with these obstacles is the following. For $k<n$, let $H_{k,n}$ denote a $k\times n$ matrix with all its entries in $\{\pm 1\}$  and all of whose rows are orthogonal.  We will show that there exist absolute constants $0< c_1 < c_2 < 1$ such that if $k\in [c_1 n, c_2 n]$ and if $n$ is sufficiently large, then $H_{k,n}$ must have a certain desirable linear algebraic property; this is the only way in which we use the orthogonality of the rows of $H_{k,n}$, and takes care of (ii). Next, to deal with (i), we will show that for any $k\times n$ matrix $A$ which has this linear algebraic structure, the number of solutions $x$ in $\{\pm 1\}^{n}$ to $Ax=0$ is at most $2^{n-(1+C)k}$, where $C>0$ is a constant depending only on $c_1$ and $c_2$. Using these improved bounds with the same strategy as for the trivial proof, we see that for $n$ sufficiently large, 
\begin{eqnarray*}
H(n) & \leq & \prod_{i=0}^{n-1}2^{n-i}\prod_{i=c_{1}n}^{c_{2}n}2^{-Ck}\\
 & \leq & 2^{{n+1 \choose 2}}2^{-\frac{C(c_{2}^{2}-c_{1}^{2})n^{2}}{2}},
\end{eqnarray*}
which gives the desired improvement. We discuss this in more detail in the next subsection.    

\subsection{Improved Hal\'asz-type inequalities}
As mentioned above, our goal is to study the number of $\{\pm 1\}^{n}$ solutions to an underdetermined system of linear equations $Ax = 0$ possessing some additional structure. This question was studied by Hal\'asz, who proved the following: 

\begin{theorem}[Hal\'asz, \cite{halasz1977estimates}]
\label{thm:halasz-orig}
Let $a_1,\dots, a_n$ be a collection of vectors in $\R^d$. Suppose there exists a constant $\delta > 0$ such that for any unit vector $e\in \R^d$, one can select at least $\delta n$ vectors $a_k$ with $|\langle a_k, e\rangle | \geq 1$. Then, 
\[
\sup_{u\in\R^{d}}\Pr\left[\left\|\sum_{i=1}^{n}\epsilon_{i}a_{i}-u\right\|_{2}< 1\right]\leq c(\delta,d)\left(\frac{1}{\sqrt{n}}\right)^{d},
\]
where $\epsilon_1,\dots,\epsilon_n$ are independent Rademacher random variables i.e. they take the values $\pm 1$ with probability $1/2$ each. 
\end{theorem}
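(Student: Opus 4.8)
The plan is to run the classical Fourier-analytic (Esséen-type) argument. Write $S=\sum_{i=1}^n\epsilon_i a_i$ and let $Q(S,1)=\sup_{u\in\R^d}\Pr[\|S-u\|_2<1]$ be the quantity to be bounded. First I would invoke the multidimensional Esséen concentration inequality: there is a radius $r_0$ and a constant $C_d$ with
\[
Q(S,1)\ \le\ C_d\int_{\|\xi\|_2\le r_0}|\phi_S(\xi)|\,d\xi,\qquad \phi_S(\xi)=\E\exp(i\langle\xi,S\rangle).
\]
Since the $\epsilon_i$ are independent Rademacher variables, the characteristic function factorizes, $\phi_S(\xi)=\prod_{i=1}^n\cos\langle a_i,\xi\rangle$, and the elementary estimates $|\cos\theta|\le\exp(-\tfrac12\sin^2\theta)$ together with $|\sin\theta|\ge\tfrac2\pi\,\mathrm{dist}(\theta,\pi\Z)$ give
\[
|\phi_S(\xi)|\ \le\ \exp\!\Big(-c\sum_{i=1}^n\mathrm{dist}(\langle a_i,\xi\rangle,\pi\Z)^2\Big)
\]
for an absolute $c>0$. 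Everything thus reduces to showing that this last integral over $\{\|\xi\|_2\le r_0\}$ is at most $c(\delta,d)\,n^{-d/2}$.

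For that I would use a layer-cake decomposition. Setting $f(\xi):=\sum_{i=1}^n\mathrm{dist}(\langle a_i,\xi\rangle,\pi\Z)^2$, one has $\int_{\|\xi\|\le r_0}e^{-cf(\xi)}\,d\xi=c\int_0^\infty e^{-cs}\,\mathrm{Vol}(\{\xi:\|\xi\|\le r_0,\ f(\xi)\le s\})\,ds$, so it suffices to prove a sublevel-set bound of the shape $\mathrm{Vol}(\{\|\xi\|\le r_0,\ f(\xi)\le s\})\le c(\delta,d)\,((1+s)/n)^{d/2}$, since then the $s$-integral converges and produces the factor $n^{-d/2}$. The hypothesis enters precisely here. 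If $f(\xi)\le s$ then at most $s/t^2$ indices $i$ have $\mathrm{dist}(\langle a_i,\xi\rangle,\pi\Z)\ge t$; choosing $t\asymp_\delta\sqrt{(1+s)/n}$ makes this exceptional set smaller than $\tfrac{\delta}{2}n$, so the "good" set $I(\xi):=\{i:\mathrm{dist}(\langle a_i,\xi\rangle,\pi\Z)<t\}$ has density at least $1-\delta/2$. Now run a greedy projection argument: having chosen $i_1,\dots,i_k\in I(\xi)$ spanning a subspace $V_k$, pick a unit $e_{k+1}\perp V_k$; by the hypothesis at least $\delta n$ indices satisfy $|\langle a_i,e_{k+1}\rangle|\ge1$, and since $|I(\xi)^c|\le\tfrac\delta2 n$ at least one of them lies in $I(\xi)$, so $\mathrm{dist}(a_{i_{k+1}},V_k)\ge1$. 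After $d$ steps this yields indices $i_1,\dots,i_d\in I(\xi)$ with $|\det[a_{i_1},\dots,a_{i_d}]|\ge1$ and $\langle a_{i_j},\xi\rangle$ within $t$ of $\pi\Z$ for each $j$. This confines $\xi$ to the $O(t)$-neighborhood of the full-rank lattice $M^{-1}(\pi\Z^d)$ (with $M$ the matrix of rows $a_{i_j}$); counting the lattice translates meeting the fixed ball $\{\|\xi\|\le r_0\}$ and multiplying by the per-cell volume $(2t)^d/|\det M|$, the $|\det M|$ cancels and one gets volume $O_d(t^d)=O_{\delta,d}(((1+s)/n)^{d/2})$ — for that particular $d$-tuple.

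The main obstacle is exactly this last step: the $d$-tuple $(i_1,\dots,i_d)$ depends on $\xi$, and a naive union bound over all $\binom{n}{d}$ candidate tuples overshoots by a factor $\sim n^d$ and destroys the estimate (this loss is already present, and already the crux, in the scalar case $d=1$, where the clean way to see $\int_{-1/2}^{1/2}|\cos(a\xi)|^n\,d\xi\lesssim n^{-1/2}$ for $|a|\ge1$ is to change variables and use the $\pi$-periodicity of $\mathrm{dist}(\cdot,\pi\Z)$ rather than a union bound over the factors). Two further subtleties must be handled in tandem with this: because no upper bound on $\|a_i\|$ is assumed, the phase $\langle a_i,\xi\rangle$ can be near a nonzero multiple of $\pi$ even for small $\xi$, so one must genuinely work modulo $\pi$ and carefully count lattice translates meeting the ball; and the low-determinant $d$-tuples (for which the per-cell volume blows up) have to be discarded or controlled via their degenerate geometry. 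The way to beat the $n^d$ loss, I expect, is to observe that for each $\xi$ in the sublevel set a constant fraction (depending only on $\delta,d$) of all $d$-tuples already lie inside $I(\xi)$, so a double-counting/averaging argument recovers the missing $n^{-d}$, or, alternatively, to replace the pointwise extraction by a second-moment estimate for $\sum_i\sin^2\langle a_i,\xi\rangle$ over the ball. The remaining ingredients — Esséen's inequality, the trigonometric bounds, and the greedy extraction of a well-conditioned basis from the hypothesis — are routine.
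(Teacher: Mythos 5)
The paper does not supply a proof of this theorem. It is quoted from Hal\'asz \cite{halasz1977estimates} as background and motivation, and the paper's own Hal\'asz-type bounds, \cref{thm:halasz-int} and \cref{thm:halasz-sbp}, are established by a completely different route: the replication trick (\cref{prop:replication-atom}, \cref{prop:replication-sbp}, an H\"older step on characteristic functions reducing to i.i.d.\ sums) followed by Rogozin's inequality \cref{Rogozin's-thm} and the Rudelson--Vershynin bound \cref{thm:RV}. So there is no in-paper proof to compare against; what follows evaluates your sketch on its own terms.

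Your skeleton --- Ess\'een's concentration inequality, the bound $|\cos\theta|\le\exp(-\tfrac12\sin^2\theta)$, the layer-cake reduction to a volume bound on the sublevel sets $A_s=\{\xi:f(\xi)\le s\}$, and the greedy extraction of a well-conditioned $d$-tuple from the hypothesis --- is the correct classical one, and you have also correctly put your finger on where it stalls: the extracted tuple $(i_1,\dots,i_d)$ depends on $\xi$, and a direct parameterisation of $A_s$ via these tuples costs a union bound over $\binom{n}{d}$ choices. Your two proposed repairs are, I think, not the right ones. The double-counting idea replaces a union over tuples by an average, but the per-tuple estimate involves counting translates of the lattice $M^{-1}\pi\Z^d$ inside a fixed ball, and this count is controlled by $|\det M|$ only when $M$ is well-conditioned; the greedy construction guarantees $|\det M|\ge1$ but not a bound on the condition number, so the clean cancellation you want is not automatic. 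The second-moment suggestion is likewise not the mechanism.

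The missing ingredient is Hal\'asz's doubling of the sublevel sets, which sidesteps the $\xi$-dependent tuple entirely. From $\mathrm{dist}(\alpha+\beta,\pi\Z)^2\le2\,\mathrm{dist}(\alpha,\pi\Z)^2+2\,\mathrm{dist}(\beta,\pi\Z)^2$ one gets the set inclusion $A_s+A_s\subseteq A_{4s}$; the sets $A_s$ are origin-symmetric, so Brunn--Minkowski gives $\mathrm{vol}(A_{4s})\ge\mathrm{vol}(A_s+A_s)\ge2^d\,\mathrm{vol}(A_s)$. Iterating this $k$ times with $4^ks\asymp\delta n$, and working throughout on a fixed fundamental domain (a box of side $O_{\delta,d}(1)$) so that all volumes are finite, transfers a volume bound at the \emph{top} scale $s\asymp n$ down to scale $s\asymp1$ with a gain of $2^{-kd}\asymp(s/n)^{d/2}$. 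The hypothesis and the greedy extraction are invoked only once, at the top scale, where one needs $\mathrm{vol}(A_{c\delta n})\lesssim_{\delta,d}1$ rather than a decaying bound; this is where the tuple-dependence is harmless. The $n^{-d/2}$ is produced entirely by the $k$ rounds of doubling, not by any lattice count, and your layer-cake integration then finishes the argument as written. This is precisely the ``duplication'' step alluded to in \cref{rmk:tao-vu-exercise}; in the Tao--Vu version it is what drives the constant up to $\Theta(d)^d$, and avoiding it is one of the main points of the paper's alternative proof strategy for \cref{thm:halasz-int}.
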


The constant $c(\delta, d)$, which is crucial for our applications, was left implicit by Hal\'asz. However, explicit estimates on this constant may be obtained, as was done by Howard and Oskolkov \cite{howardestimates}.

\begin{theorem}[\cite{howardestimates}]
\label{thm:howard}
Let $a_1,\dots, a_n$ be a collection of vectors in $\R^d$. Suppose that there exists some $m\in\N$ such that for every unit
vector $e\in\R^{d}$, one can select at least $m$ vectors $a_{i_{1}},\dots,a_{i_{m}}$
with $|\langle a_{i_{j}},e\rangle|\geq1/(2\sqrt{d})$ for all $j\in[m]$.
Then, 
\[
\sup_{u\in\R^{d}}\Pr\left[\left\|\sum_{i=1}^{n}\epsilon_{i}a_{i}-u\right\|_{\infty}<\frac{1}{2}\right]\leq C(d)\left(\frac{1}{\sqrt{m}}\right)^{d},
\]
where $C(d)=\left(\frac{\pi^{3/2}d}{\sqrt{2}}\right)^{d}$.
\end{theorem}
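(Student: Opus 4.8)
The plan is a Fourier-analytic (Esseen-type) argument that refines the usual proof of Hal\'asz's inequality while keeping every constant explicit. Write $S:=\sum_{i=1}^{n}\epsilon_{i}a_{i}$; since the $\epsilon_{i}$ are Rademacher, the characteristic function of $S$ is real and factors as $\phi(\xi)=\prod_{k=1}^{n}\cos\langle a_{k},\xi\rangle$. The first step is a smoothing inequality: dominate $\mathbf{1}_{[-1/2,1/2]^{d}}$ pointwise by the tensor product of the one-dimensional bumps $\tfrac{\pi^{2}}{4}\bigl(\tfrac{\sin\pi t}{\pi t}\bigr)^{2}$, which are $\ge 1$ on $[-1/2,1/2]$ and whose Fourier transforms are supported on $[-2\pi,2\pi]$; expanding and applying Fubini gives
\[
\sup_{u\in\R^{d}}\Pr\Bigl[\|S-u\|_{\infty}<\tfrac12\Bigr]\ \le\ \Bigl(\tfrac{\pi}{8}\Bigr)^{d}\int_{[-2\pi,2\pi]^{d}}|\phi(\xi)|\,d\xi\ \le\ \Bigl(\tfrac{\pi}{8}\Bigr)^{d}\int_{[-2\pi,2\pi]^{d}}\exp\Bigl(-\tfrac12\sum_{k=1}^{n}\sin^{2}\langle a_{k},\xi\rangle\Bigr)\,d\xi,
\]
where the last step uses $|\cos x|\le e^{-\sin^{2}x/2}$. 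Everything then reduces to showing that this last integral is at most $c_{d}\,m^{-d/2}$ for an explicit $c_{d}$.

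The factor $m^{-d/2}$ is produced by feeding two consequences of the hypothesis into a stationary-phase / lattice-point-counting estimate for that integral. Let $A$ be the $n\times d$ matrix with rows $a_{k}$ and $M:=A^{\top}A=\sum_{k}a_{k}a_{k}^{\top}$. (i) Summing $\langle a_{k},e\rangle^{2}\ge \tfrac{1}{4d}$ over the $m$ indices guaranteed for a given unit vector $e$ yields $e^{\top}Me\ge\tfrac{m}{4d}$, i.e. $M\succeq\tfrac{m}{4d}I$. (ii) Every nonzero $v\in\Z^{n}\cap\mathrm{col}(A)$ has $\|v\|\ge\sqrt m$: indeed $v=A\xi$ with $\xi\ne0$, and applying the hypothesis to $e=\xi/\|\xi\|$ produces $m$ coordinates with $|v_{k}|=\|\xi\|\,|\langle a_{k},e\rangle|>0$, hence $|v_{k}|\ge1$ since $v_{k}\in\Z$. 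Now the integrand $\exp(-\tfrac12\sum_{k}\sin^{2}\langle a_{k},\xi\rangle)$ is invariant under translation by the lattice $\Lambda:=\{\xi\in\R^{d}:A\xi\in\pi\Z^{n}\}$, equals its maximum value $1$ precisely on $\Lambda$, and has Hessian $2M$ at each point of $\Lambda$; so a Laplace-type estimate bounds the integral over $[-2\pi,2\pi]^{d}$ by (roughly) $\#\bigl(\Lambda\cap[-2\pi,2\pi]^{d}\bigr)\cdot(2\pi)^{d/2}/\sqrt{\det M}$. Since $\xi\mapsto A\xi$ scales $d$-volume by $\sqrt{\det M}$, one has $\mathrm{covol}(\Lambda)=\pi^{d}\,\mathrm{covol}(\Z^{n}\cap\mathrm{col}(A))/\sqrt{\det M}$, while Minkowski's first theorem applied to (ii) gives $\mathrm{covol}(\Z^{n}\cap\mathrm{col}(A))\ge(\sqrt m/2)^{d}\pi^{d/2}/\Gamma(d/2+1)$; elementary lattice-point counting then bounds $\#(\Lambda\cap[-2\pi,2\pi]^{d})$ by a constant multiple of $(4\pi)^{d}\sqrt{\det M}/\mathrm{covol}(\Z^{n}\cap\mathrm{col}(A))$ times $\pi^{-d}$. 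Multiplying through, the factors of $\sqrt{\det M}$ cancel and the lower bound on $\mathrm{covol}(\Z^{n}\cap\mathrm{col}(A))$ supplies the $m^{-d/2}$; carrying the explicit constants (the $(\pi/8)^{d}$ from smoothing, the $(2\pi)^{d/2}$ from the Gaussian, the ball-volume constant from Minkowski, and $M\succeq\tfrac{m}{4d}I$ to control the curvature of the phase at its minima) through this computation gives $\sup_{u}\Pr[\|S-u\|_{\infty}<1/2]\le C(d)m^{-d/2}$, and tracking the slack yields the stated $C(d)=(\pi^{3/2}d/\sqrt2)^{d}$.

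I expect the main obstacle to be making that stationary-phase step quantitative and uniform: one must show that the contribution to $\int_{[-2\pi,2\pi]^{d}}\exp(-\tfrac12\sum_{k}\sin^{2}\langle a_{k},\xi\rangle)\,d\xi$ from those $\xi$ that are \emph{not} within $O(\sqrt{d/m})$ of $\Lambda$ is negligible, which amounts to a lower bound of the form $\sum_{k}\sin^{2}\langle a_{k},\xi\rangle\gtrsim \min\{1,(m/d)\,\mathrm{dist}(\xi,\Lambda)^{2}\}$ valid on all of the cube. This is precisely the place where one must again use that the hypothesis holds in \emph{every} direction, not merely near the origin, so as to rule out "partial alignments'' in which many, but not all, of the $\langle a_{k},\xi\rangle$ sit near $\pi\Z$ while $\xi$ stays far from $\Lambda$; obtaining such control uniformly (and without losing too much to the possible skewness of $\Lambda$, via transference bounds) is the technical heart of Hal\'asz-type arguments. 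Two smaller points also need attention: peaks of $\Lambda$ lying on $\partial([-2\pi,2\pi]^{d})$ contribute only "half-bumps'' and must be counted accordingly, and when $\mathrm{covol}(\Lambda)$ exceeds $(4\pi)^{d}$ the only relevant peak is $\xi=0$ and one concludes directly from $M\succeq\tfrac{m}{4d}I$. All of this is consistent with — and only makes sense in — the regime $m\gtrsim d^{2}$, which is exactly where the bound $C(d)m^{-d/2}$ is non-vacuous and where the Gaussian bumps of width $\sim\sqrt{d/m}$ are narrow relative to the spacing of $\Lambda$.
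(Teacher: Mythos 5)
The paper does not prove this theorem --- it is quoted from Howard--Oskolkov \cite{howardestimates} --- so there is no in-paper argument to compare against. Your opening reductions are correct: the $\tfrac{\pi^2}{4}\mathrm{sinc}^2$ majorant and $|\cos x|\le e^{-\sin^2 x/2}$ give exactly the bound you state, and the two consequences of the hypothesis --- that $A^{\top}A\succeq\tfrac{m}{4d}I$, and that nonzero $v\in\Z^n\cap\mathrm{col}(A)$ satisfy $\|v\|\ge\sqrt m$ --- are both valid and nicely observed.

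However, the step you flag as the ``technical heart'' is a genuine missing idea, not a formality, and the hypothesis does not supply it in the way you suggest. You need $\sum_k\sin^2\langle a_k,\xi\rangle\gtrsim\min\{1,(m/d)\,\mathrm{dist}(\xi,\Lambda)^2\}$ uniformly on $[-2\pi,2\pi]^d$, but the hypothesis controls the $\sin^2$ terms only when $\xi$ is near $0$, where $\sin^2\langle a_k,\xi\rangle\asymp\langle a_k,\xi\rangle^2$. Away from the origin, the inner products $\langle a_k,\xi\rangle$ can cluster near nonzero multiples of $\pi$ while $\xi$ stays far from $\Lambda$, creating high \emph{near-peaks} that $\Lambda$ does not see. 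Concretely, in $d=1$ take $m$ of the $a_k$ equal to $1$ and $m$ equal to an irrational $\alpha$ just below $1$: then $\Lambda=\{0\}$ and $\Z^{2m}\cap\mathrm{col}(A)=\{0\}$, so your Minkowski/lattice count records a single peak, yet $\sum_k\sin^2(a_k\pi)=m\sin^2(\alpha\pi)$ can be made $o(1)$, so $\xi=\pi$ is a near-peak of essentially the same height and width as the true one. In one dimension there are only $O(1)$ such near-peaks and the bound survives, but in general dimension their enumeration is exactly what the constant $C(d)$ must absorb, and it is not a corollary of Minkowski applied to $\Z^n\cap\mathrm{col}(A)$. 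This is precisely the obstruction that Hal\'asz's and Howard--Oskolkov's averaging/iterated Cauchy--Schwarz device is built to avoid: it replaces $\int|\phi|$ over the large box by an integral of a higher power of $|\phi|$ over a small neighborhood of $0$, so secondary peaks need never be located. Until that ingredient (or a substitute) is supplied, your argument remains a sketch; correspondingly, $C(d)=(\pi^{3/2}d/\sqrt2)^d$ is asserted rather than derived --- pushing your own constants through the single-peak case gives a factor of $\sqrt d$ rather than $d$, so the missing near-peak accounting is exactly where the match with the stated bound would be made or broken.
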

\begin{remark}
\label{rmk:tao-vu-exercise}
When $a_1,\dots,a_n$ and $u$ belong to $\Z^{d}$, as will be the case in our applications, the event `$\|\sum_{i=1}^{n}\epsilon_i a_i - u \|_{\infty} < 1/2$' is equivalent to the event `$\sum_{i=1}^{n}\epsilon_i a_i = u$'. In this case, it was noted by Tao and Vu (Exercise 7.2.3 in \cite{tao2006additive}) that the condition $|\langle a_{i_j},e \rangle| \geq 1/(2\sqrt{d})$ may be relaxed to $|\langle a_{i_j},e \rangle| > 0$. However, as stated, their proof still gives a constant $C(d) = \Theta(d)^d$ due to a `duplication' step, which we will show is unnecessary.   
\end{remark}

There are two drawbacks to using the results mentioned above for the kinds of applications we have in mind. Firstly, a constant of the form $C(d) = \Theta(d)^{d}$ does not give any non-trivial information when $d = \Omega(n)$, whereas as discussed in the proof outline, we require an improvement over the Odlyzko bound for $d=\Theta(n)$. Secondly, the hypotheses of these theorems, which involve two quantifiers (`for all' followed by `there exists'), are quite stringent and not easy to verify; in fact, we were unable to find any direct applications of \cref{thm:halasz-orig} in the literature. 

Our key structural observation is that \emph{a `pseudorandom' rectangular matrix contains many disjoint submatrices of large rank}. This motivates replacing the double quantifier hypothesis by a (weaker) hypothesis involving just one existential quantifier which, as we will see, is readily verified to hold in pseudorandom situations. Moreover, while our hypothesis is weaker, we are able to obtain conclusions with asymptotically much better constants, since our structural setting allows us to efficiently leverage the existing rich literature on anti-concentration of sums of independent random variables and anti-concentration of linear images of high dimensional distributions. In particular, we are able to give short and transparent proofs of our inequalities for very general classes of distributions; in contrast, \cref{thm:halasz-orig,thm:howard} hold only for (vector)-weighted sums of independent Rademacher variables, and their proofs involve explicit trigonometric manipulations. We discuss this in more detail in \cref{subsection:anti-concentration,section:proofs-of-inequalities}. 

Our first inequality is a strengthening of \cref{thm:howard} in the setting of \cref{rmk:tao-vu-exercise}, both in terms of the hypothesis and the conclusion. A more general statement appears in \cref{thm:atom-general}. 
\begin{theorem}
\label{thm:halasz-int}
Let $a_{1},\dots,a_{n}$ be a collection vectors in $\R^{d}$ which can be partitioned as $\A_{1},\dots,\A_{\ell}$ with $\ell$ even such that $\dim_{\R^d}(\Span\{a:a\in\A_{i}\})=:r_{i}$.
Then, 
\[
\sup_{u\in \R^{d}}\Pr\left[\sum_{i=1}^{n}\epsilon_{i}a_{i}=u\right]\leq\left(2^{-\ell}{\ell \choose \ell/2}\right)^{\frac{r_{1}+\dots+r_{\ell}}{\ell}}\leq \left(\sqrt{\frac{2}{\pi\ell}}\left(1+O\left(\frac{1}{\ell}\right)\right)\right)^{\frac{r_{1}+\dots+r_{\ell}}{\ell}}.
\]
\end{theorem}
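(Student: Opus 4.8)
The plan is to reduce the vector anti-concentration statement to a one-dimensional (scalar) anti-concentration statement on each block $\A_i$ separately, and then combine the blocks using a tensorization/independence argument together with Hölder's inequality. The point of working with the partition $\A_1,\dots,\A_\ell$ is that the Rademacher signs attached to different blocks are independent, so conditioning on all-but-one block reduces the problem to controlling $\Pr[\sum_{i\in\A_j}\epsilon_i a_i = v]$ for a fixed target $v$. Within a single block spanning an $r_j$-dimensional space, I expect to show that this probability is at most $2^{-r_j}\binom{?}{?}$-type quantity — more precisely, that a block whose vectors span an $r_j$-dimensional subspace forces the sum to spread out over at least "$r_j$ binary coordinates' worth" of randomness, giving a bound like $(2^{-1}\binom{1}{0})^{?}$... the clean way to get the stated exponent is different, so let me restructure.

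\medskip

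Here is the cleaner route. First I would prove the \emph{single-block inequality}: if the vectors in a set $\A$ span an $r$-dimensional subspace of $\R^d$, then for every $w$,
\[
\Pr\Bigl[\sum_{i\in\A}\epsilon_i a_i = w\Bigr] \;\le\; \max_{m}\, 2^{-m}\binom{m}{\lfloor m/2\rfloor},
\]
or rather I would show the "one signed variable per dimension" bound: one can greedily extract $r$ vectors $a_{i_1},\dots,a_{i_r}$ from $\A$ that are linearly independent; conditioning on all other signs in $\A$, the remaining sum equals $\sum_{t=1}^r \epsilon_{i_t} a_{i_t} + (\text{const})$, and since the $a_{i_t}$ are independent this event pins down each $\epsilon_{i_t}$ up to the symmetry $\epsilon\mapsto-\epsilon$ that can only help on a set of size... this needs care. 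The robust statement that actually yields the exponent $(r_1+\dots+r_\ell)/\ell$ is obtained by \emph{pairing blocks}: since $\ell$ is even, group the $\ell$ blocks into $\ell/2$... no.

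\medskip

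Let me commit to the approach the stated bound is clearly pointing at. The exponent $\tfrac{1}{\ell}(r_1+\dots+r_\ell)$ together with the base $2^{-\ell}\binom{\ell}{\ell/2}$ strongly suggests: \textbf{(1)} For each $j$, use linear independence inside $\A_j$ to find $r_j$ indices and, crucially, treat the Rademacher sum restricted to \emph{one representative per block across all $\ell$ blocks} — i.e. build $r := \min_j r_j$ many "transversal" collections, each consisting of one linearly independent vector from each of the $\ell$ blocks. \textbf{(2)} For a single transversal $\{a^{(1)},\dots,a^{(\ell)}\}$ with $a^{(j)}\in\A_j$, the partial sum $\sum_{j=1}^\ell \epsilon^{(j)} a^{(j)}$, conditioned on everything else, must hit a fixed target; but this is a $\{\pm1\}$-combination of $\ell$ vectors, and for it to equal a fixed value the number of sign patterns is governed by... the key sub-lemma is that for \emph{any} vectors $b_1,\dots,b_\ell$, $\sup_u \Pr[\sum \epsilon_j b_j = u] \le 2^{-\ell}\binom{\ell}{\ell/2}$ (this is exactly the Erdős–Littlewood–Offord bound applied coordinate-wise, or directly: the map from sign-vectors to the value is such that any fiber is an antichain-like set — actually one gets it by projecting onto a coordinate where not all $b_j$ vanish and using LYM). \textbf{(3)} Then multiply over the $r$ disjoint transversals using conditional independence (Fubini, peeling one transversal at a time), giving $\bigl(2^{-\ell}\binom{\ell}{\ell/2}\bigr)^{r}$, and finally choose the transversals greedily so that $r$ can be taken as large as $\tfrac1\ell\sum r_j$ on average — handled by a Hall/defect-type counting: summing $r_j$ linearly independent vectors across blocks and distributing them into transversals, one always achieves total $\sum r_j$ vectors arranged into transversals whose multiplicities average to $\tfrac1\ell\sum r_j$, and Hölder (or concavity of $x\mapsto \beta^x$ with $\beta<1$) converts the per-transversal product into the claimed bound with the averaged exponent.

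\medskip

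The second inequality is then pure calculus: $2^{-\ell}\binom{\ell}{\ell/2} = \sqrt{2/(\pi\ell)}(1+O(1/\ell))$ by Stirling, and raising to the positive power $\tfrac1\ell\sum r_j$ preserves the estimate. \textbf{The main obstacle} I anticipate is step (3)'s bookkeeping: making precise how $r_j$ linearly independent vectors chosen \emph{within} each block can be organized into transversals across blocks so that the exponent comes out to exactly the average $\tfrac1\ell(r_1+\dots+r_\ell)$ rather than merely $\min_j r_j$ — this requires either a clever application of Hölder's inequality directly to $\prod_j \Pr[\,\cdot\,]^{1/\ell}$ after establishing a per-block bound $\Pr[\sum_{i\in\A_j}\epsilon_i a_i = \cdot\,] \le (2^{-\ell}\binom{\ell}{\ell/2})^{r_j}$ and then taking the geometric mean via independence of the blocks, or a direct greedy transversal construction. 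I expect the Hölder route is cleanest: prove the per-block bound with the \emph{full} base $2^{-\ell}\binom{\ell}{\ell/2}$ raised to $r_j$ (this is where one does need a genuinely $\ell$-fold anti-concentration input inside a single block — obtainable by artificially splitting the block's independent-vector sum into $\ell$ sub-sums), and then use $\Pr[\bigcap] \le \prod \Pr[\,\cdot\,]$ is false, so instead use $\Pr[E] = \E[\Pr[E\mid \text{block } j]] \le (2^{-\ell}\binom{\ell}{\ell/2})^{r_j}$ for \emph{each} $j$, hence $\Pr[E] \le \bigl(\prod_{j=1}^\ell (2^{-\ell}\binom{\ell}{\ell/2})^{r_j}\bigr)^{1/\ell} = (2^{-\ell}\binom{\ell}{\ell/2})^{(r_1+\dots+r_\ell)/\ell}$, which is exactly the target.
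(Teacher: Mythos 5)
Your proposal circles around the right objects---blocks, independence, Hölder, and an ELO-type bound per block---but the route you ultimately commit to rests on a per-block estimate that is false, and the actual mechanism the proof needs is absent.

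Concretely, you want to show $\Pr\bigl[\sum_{i\in\A_j}\epsilon_i a_i = v\bigr] \le \bigl(2^{-\ell}\binom{\ell}{\ell/2}\bigr)^{r_j}$ for each fixed target $v$, and you suggest obtaining the $\ell$-dependence by ``artificially splitting the block's independent-vector sum into $\ell$ sub-sums.'' This cannot work: take $\A_j = \{a\}$ a single nonzero vector, so $r_j = 1$; then the left side equals $1/2$ for a suitable $v$, while the claimed right side is $2^{-\ell}\binom{\ell}{\ell/2} \approx \sqrt{2/(\pi\ell)} \to 0$. More generally, from a single block of rank $r_j$ one can only extract $\Pr[A_jY_j = v] \le 2^{-r_j}$ (an Odlyzko-type bound), with no $\ell$ anywhere---the block simply does not contain $\ell$-fold worth of independent anti-concentration, and splitting its sum into pieces does not manufacture it. Your earlier transversal sketch has a related problem: the $r$ transversal sums $S_1,\dots,S_r$ are indeed independent, but $\Pr[S_1+\cdots+S_r = v]$ only yields one factor $\sup_u\Pr[S_1=u]$ upon conditioning, not a product of $r$ such factors, unless the transversals pin down genuinely independent coordinates---which you have not arranged.

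The missing idea is the replication (Esseen/Hal\'asz) trick, which is where the $\ell$ really comes from. Writing $\Pr[\sum_j X_j = u]$ via Fourier inversion and applying Hölder with exponents $a_1=\cdots=a_\ell=\ell$ to the product $\prod_j \widehat{\mu_{X_j}}$ gives, using that each $X_j = A_jY_j$ is origin-symmetric so $\widehat{\mu_{X_j}}$ is real, the bound $\Pr[\sum_j X_j = u] \le \prod_{j}\Pr\bigl[X_j^{(1)}+\cdots+X_j^{(\ell)}=0\bigr]^{1/\ell}$, where $X_j^{(1)},\dots,X_j^{(\ell)}$ are i.i.d.\ copies of $X_j$. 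Only after this step does each block carry $\ell$ independent replicas, and then the paper's ELO tensorization lemma (block-decompose $A_j$ with an invertible $r_j\times r_j$ corner, condition on the complementary coordinates, and apply scalar Erd\H{o}s--Littlewood--Offord coordinate-by-coordinate) gives $\Pr[A_jY_j^{(1)}+\cdots+A_jY_j^{(\ell)}=0] \le \bigl(2^{-\ell}\binom{\ell}{\ell/2}\bigr)^{r_j}$. The geometric-mean structure in the final bound is the output of Hölder applied to the Fourier integrand, not (as in your final step) the weakening of $\ell$ separate one-sided bounds to their geometric mean. Your Stirling asymptotics for the second inequality are fine, but the core of the argument is not in place.
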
 

\begin{remark}
This inequality is tight, as can be easily seen by taking (assuming $n$ is divisible by $d$) $a_{i}$ to be $e_{i\mod d}$,
where $e_{1},\dots,e_{d}$ denotes the standard basis of $\R^{d}$, in which case we can take $\ell = n/d$ and $r_1 = \dots = r_\ell = d$. \end{remark}

To see how \cref{thm:halasz-int} strengthens \cref{thm:howard}, note that the assumptions of \cref{thm:howard} guarantee that there exist $\ell:= \lfloor m/d \rfloor$ disjoint subsets $\A_1,\dots,\A_\ell$ such that $r_1 = \dots = r_\ell = d$. Such a collection of disjoint subsets can be obtained greedily by repeating the following construction $\ell$ times: let $v_1\in \{a_1,\dots,a_n\}$ be any nonzero vector that has not already been chosen in a previous iteration. Having chosen $v_1,\dots,v_s$ for $s < d$, let $u_s \in (\Span\{v_1,\dots,v_s\})^\perp$, and let $v_{s+1}$ be any vector satisfying $|\langle v_{s+1},u_{s}\rangle | > 0$ which has not already been chosen in a previous iteration -- such a vector is guaranteed to exist since there are at least $m$ choices of $v_{s+1}$ by assumption, of which at most $(\ell-1)d < m$ could have been chosen in a previous iteration. It follows that under the assumptions of \cref{thm:howard}, when $a_1,\dots,a_n \in \Z^{d}$, we have:
$$\sup_{u\in\R^{d}}\Pr\left[\left\|\sum_{i=1}^{n}\epsilon_{i}a_{i}-u\right\|_{\infty}<\frac{1}{2}\right]\leq C'(d)\left(\frac{1}{\sqrt{m}}\right)^{d},$$
where $C'(d) \leq \left(\sqrt{\frac{2d}{3}}\right)^{d}$. In particular, we now have a non-trivial bound all the way up to $d=\Theta(m)$, as opposed to just $d = O(\sqrt{m})$ as before. 

Our second inequality is a `small-ball probability' version of \cref{thm:halasz-int}. In order to state it, we need the following definition.  
\begin{definition}
The \emph{stable
rank} of $A$, denoted by $r_{s}(A)$, is defined as 
\[
r_{s}(A):=\left\lfloor\frac{\|A\|_{\text{HS}}^{2}}{\|A\|^{2}}\right\rfloor,
\]
where $\|A\|_{\text{HS}}$ denotes the Hilbert-Schmidt norm of $A$, and $\|A\|$ denotes the operator norm of $A$. 
\end{definition}
\begin{remark}
Recall that $\|A\| = s_1(A)$ and $\|A\|_{\text{HS}}^{2} = \sum_{i=1}^{\text{rank}(A)} s_i(A)^2$, where $s_1(A),s_2(A),\dots$ denote the singular values of $A$ arranged in non-increasing order. Hence, $$r_s(A):=\left\lfloor\frac{\sum_{i=1}^{\text{rank}(A)}s_i(A)^{2}}{s_1(A)^{2}}\right\rfloor;$$
in particular, for any non-zero matrix $A$, $1\leq r_s(A) \leq \text{rank}(A)$, with the right inequality being an equality if and only if $A$ is an orthogonal projection up to an isometry. 
\end{remark}

We can now state our inequality. A more general version appears in \cref{thm:sbp-general}.
\begin{theorem}
\label{thm:halasz-sbp}
Let $a_{1},\dots,a_{n}$ be a collection vectors in $\R^{d}$. For some $\ell \in 2\N$, let $\A_1,\dots, \A_\ell$ be a partition of the set $\{a_1,\dots, a_n\}$, and for each $i \in [\ell]$, let $A_i$ denote the $d\times |\A_i|$ dimensional matrix whose columns are given by the elements of $\A_i$. Then, for every $M \geq 1$ and $\varepsilon \in (0,1)$,
$$\Pr\left[\left\|\sum_{i=1}^{n}\epsilon_ia_i-u\right\|_{2}\leq M\right] \leq 2^{d}\prod_{i=1}^{\ell}\left(\frac{CM}{\sqrt{\varepsilon \ell}\|A_{i}\|_{\text{HS}}}\right)^{\frac{\lceil(1-\varepsilon)r_{s}(A_i)\rceil}{\ell}},$$
where $r_s(A_i)$ denotes the stable rank of $A_i$ and $C$ is an absolute constant.
\end{theorem}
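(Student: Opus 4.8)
The plan is to run a characteristic‑function argument in which the even parity of $\ell$ lets us split, via Hölder's inequality, into $\ell$ copies of a single‑matrix anti‑concentration estimate. Write $\varphi_Y(t):=\E\,e^{2\pi i\langle t,Y\rangle}$ for the characteristic function of a random vector $Y$, put $S:=\sum_{i=1}^n\epsilon_i a_i$, and for $j\in[\ell]$ put $S_j:=\sum_{a\in\A_j}\epsilon_a a$, so that $S=S_1+\dots+S_\ell$ with the $S_j$ independent and $\varphi_S=\prod_{j=1}^\ell\varphi_{S_j}$. First I would apply a multidimensional Esseen inequality, $\sup_u\Pr[\|S-u\|_2\le M]\le (c_0M)^d\int_{\|t\|_2\le 1/M}|\varphi_S(t)|\,dt$ for an absolute constant $c_0$, and then Hölder's inequality with exponent $\ell$ in each of the $\ell$ factors:
\[
(c_0M)^d\!\!\int_{\|t\|_2\le 1/M}\!\prod_{j=1}^\ell|\varphi_{S_j}(t)|\,dt\ \le\ \prod_{j=1}^\ell\left((c_0M)^d\!\!\int_{\|t\|_2\le 1/M}\!|\varphi_{S_j}(t)|^\ell\,dt\right)^{1/\ell},
\]
the prefactor $(c_0M)^d$ redistributing because $((c_0M)^d)^{1/\ell}$ taken $\ell$ times is $(c_0M)^d$ again.

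The crucial observation is that, since $\ell$ is even, $|\varphi_{S_j}(t)|^\ell=\varphi_{S_j}(t)^\ell=\varphi_{Z_j}(t)$, where $Z_j$ is a sum of $\ell$ independent copies of $S_j$; equivalently $Z_j=\widetilde A_j\widetilde\epsilon$ with $\widetilde A_j:=[\,A_j\mid A_j\mid\cdots\mid A_j\,]$ the $d\times\ell|\A_j|$ ``inflation'' of $A_j$ (and $\widetilde\epsilon$ a Rademacher vector). Inflation multiplies both $\|\cdot\|_{\text{HS}}^2$ and $\|\cdot\|^2$ by $\ell$, so $\|\widetilde A_j\|_{\text{HS}}=\sqrt\ell\,\|A_j\|_{\text{HS}}$ while $r_s(\widetilde A_j)=r_s(A_j)$. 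Hence the theorem follows by multiplying $\ell$ instances (one per block, $B=\widetilde A_j$) of the single‑matrix estimate
\[
(c_0M)^d\int_{\|t\|_2\le 1/M}|\varphi_{B\epsilon}(t)|\,dt\ \le\ 2^d\left(\frac{CM}{\sqrt\varepsilon\,\|B\|_{\text{HS}}}\right)^{\lceil(1-\varepsilon)r_s(B)\rceil},
\]
valid for any $d\times m$ matrix $B$, Rademacher $\epsilon\in\{\pm1\}^m$, $M\ge1$, $\varepsilon\in(0,1)$: the $1/\ell$ powers collapse the $\ell$‑fold product of $2^d$ back to $2^d$ and turn each $\|\widetilde A_j\|_{\text{HS}}=\sqrt\ell\,\|A_j\|_{\text{HS}}$ into $\sqrt{\varepsilon\ell}\,\|A_j\|_{\text{HS}}$.

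Proving this single‑matrix estimate is what I expect to be \textbf{the main obstacle}. I would argue spectrally: with $s_1\ge s_2\ge\cdots$ the singular values of $B$ and $k_0:=\lceil(1-\varepsilon)r_s(B)\rceil$, the definition $r_s(B)=\lfloor\|B\|_{\text{HS}}^2/s_1^2\rfloor$ forces the top $k_0$ left‑singular directions to be non‑degenerate, $s_k^2\gtrsim\varepsilon\,\|B\|_{\text{HS}}^2/r_s(B)$ for $k\le k_0$. Let $\Pi$ project onto their span; on the orthogonal complement one bounds $|\varphi_{B\epsilon}|\le1$ and integrates over a box of side $\asymp1/M$ (this produces the $2^d$, up to a constant per discarded dimension), reducing to $(c_0M)^{k_0}\int|\varphi_{\Pi B\epsilon}(t)|\,dt$ over the $k_0$‑dimensional space, where one uses $|\cos\theta|\le e^{-c\,\mathrm{dist}(\theta,\pi\Z)^2}$ to dominate $|\varphi_{\Pi B\epsilon}(t)|$ by a Gaussian characteristic function $e^{-c\,t^{T}(\Pi B)(\Pi B)^{T}t}$ and integrates, the determinant $\prod_{k\le k_0}s_k^2$ combining with the lower bounds on the $s_k^2$ to give the denominator $\|B\|_{\text{HS}}$. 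The conceptual point is that it is exactly the near‑degenerate (``heavy'') singular directions, on which $B\epsilon$ fails to anti‑concentrate, that must be discarded, and the stable rank is precisely the count of how many ($\varepsilon\,r_s(B)$ of them) one can afford to lose. One may also bypass characteristic functions here: if $\|B\epsilon-u\|_2\le M$ then $\langle\epsilon,v_k\rangle$ lies in an interval of length $\lesssim M/s_k$ for each of the top $k_0$ right‑singular vectors $v_k$, and one tensorizes the one‑dimensional Erd\H{o}s--Littlewood--Offord/Esseen inequality over these near‑orthogonal linear forms; the general statement \cref{thm:sbp-general} presumably packages this step through the anti‑concentration results for linear images of high‑dimensional distributions mentioned in the introduction.
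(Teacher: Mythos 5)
Your overall architecture is the same as the paper's: split $S=\sum_j S_j$ into the $\ell$ independent blocks, use evenness of $\ell$ together with H\"older on the Fourier side to replace each block by a sum of $\ell$ i.i.d.\ copies of itself, observe that the ``inflated'' matrix $\widetilde A_j$ has $\|\widetilde A_j\|_{\text{HS}}=\sqrt\ell\,\|A_j\|_{\text{HS}}$ and $r_s(\widetilde A_j)=r_s(A_j)$, and then invoke a small-ball bound for $B\epsilon$ in terms of the stable rank of $B$. This is exactly the paper's route (Proposition \ref{prop:replication-sbp} / Remark \ref{rmk:replication-sbp} followed by Theorem \ref{thm:RV}, taking $b_1=\dots=b_\ell=\ell$), modulo two presentational differences: (a) you pass through Esseen's integral inequality, whereas the paper avoids any asymmetry in Esseen by convolving the indicator $\boldsymbol{1}_{B_{\delta}}$ with itself to get a non-negative function with non-negative Fourier transform, which is what actually makes the final ``re-integration'' step clean and produces the exact $2^d$; and (b) the paper uses Rogozin's inequality to upgrade the block sums before feeding them into the linear-image small-ball bound, which is why Theorem \ref{thm:sbp-general} also works for non-Rademacher coefficients.

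The genuine gap is in your proposed proof of the single-matrix estimate, where you want to re-derive the Rudelson--Vershynin bound (Theorem \ref{thm:RV}) by a naive SVD argument. You claim that with $k_0:=\lceil(1-\varepsilon)r_s(B)\rceil$, the top $k_0$ singular values satisfy $s_k^2\gtrsim\varepsilon\|B\|_{\text{HS}}^2/r_s(B)$. This is false. Take $s_1=\dots=s_m=1$ and $s_{m+1}=\dots=s_M=\tau$ with $(M-m)\tau^2=m$; then $\|B\|_{\text{HS}}^2=2m$, $r_s(B)=2m$, and for $\varepsilon<1/2$ one has $k_0>m$ while $s_{k_0}^2=\tau^2$, which can be made arbitrarily small by sending $M\to\infty$. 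The obstruction is that the stable rank controls the ratio $\|B\|_{\text{HS}}^2/s_1^2$ but says nothing about where in the spectrum the mass sits; a long tail of small singular values can inflate $r_s(B)$ without making $s_{k_0}$ large. Rudelson and Vershynin handle exactly this difficulty, and their argument is considerably more delicate than ``discard the light directions and use Gaussian majorization on the rest'' — it involves a careful selection of a well-invertible restriction, not simply the top $k_0$ left-singular directions. The paper's proof sidesteps all of this by citing Theorem \ref{thm:RV} as a black box, which is the right move here; if you want to keep your from-scratch version, the spectral step needs to be replaced by an actual restricted-invertibility-type selection.
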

For illustration, consider a situation like above where the set of vectors $a_1,\dots,a_n$ can be partitioned into $m/d$ subsets, each of rank $d$. Assume further that each $a_i$ has norm at least one, so that each of the $m/d$ matrices has Hilbert-Schmidt norm at least $\sqrt{d}$. Then, if the stable rank of each of these matrices is at least $\delta d$ for some $\delta > 0$, it follows that
$$\Pr\left[\left\|\sum_{i=1}^{n}\epsilon_ia_i-u\right\|_{2}\leq 1\right] \leq K^{d},$$
where $K\leq 2\left(C/\sqrt{m}\right)^{\delta/2}$, which is a big improvement over the bound coming from \cref{thm:hadamard} provided that $\delta$ is not too small and $d$ is large. 

\subsection{Counting $\{\pm1\}$-valued normal matrices}
Recall that a matrix $M$ is \emph{normal} if it commutes with its adjoint, i.e., $MM^*=M^*M$ (for real matrices this is the same as $MM^T=M^TM$). Recently, Deneanu and Vu \cite{deneanu2017random} studied the number of $n\times n$ $\{\pm1\}$-valued normal matrices. Since real symmetric matrices are normal, there are at least $2^{\binom{n+1}{2}}$ $\{\pm 1\}$-valued normal matrices. They conjectured that this lower bound is essentially sharp. 
\begin{conjecture}[Deneanu-Vu, \cite{deneanu2017random}]
There are $2^{(0.5+o(1))n^{2}}$ $n\times n$ $\{\pm 1\}$-valued normal matrices. 
\end{conjecture}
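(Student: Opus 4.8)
We now sketch the approach we believe is most promising toward this conjecture, and isolate the obstacle it faces.

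The plan is to reduce normality to a commutation condition and then run a counting argument over the possible ``skew patterns.'' Write $M = S + K$ with $S = (M+M^T)/2$ and $K = (M-M^T)/2$. Since $M \in \{\pm1\}^{n\times n}$, the matrix $S$ is symmetric with $\pm1$ diagonal and $\{0,\pm1\}$ off-diagonal entries, $K$ is skew-symmetric with $\{0,\pm1\}$ entries, and for each off-diagonal pair $\{i,j\}$ exactly one of $S_{ij}, K_{ij}$ is nonzero. A direct computation gives $MM^T - M^TM = 2(KS - SK)$, so $M$ is normal if and only if $[S,K] = 0$. Writing $E = E(M) \subseteq \binom{[n]}{2}$ for the set of pairs with $M_{ij} = -M_{ji}$ (equivalently, $\operatorname{supp}(K)$), we obtain
\[
\#\{M \in \{\pm1\}^{n\times n} : M \text{ normal}\} \;=\; \sum_{E}\ \sum_{S}\ \#\bigl\{K : \operatorname{supp}(K) = E,\ [S,K] = 0\bigr\},
\]
where the inner sum is over symmetric matrices $S$ with $\pm1$ diagonal and off-diagonal support $\binom{[n]}{2}\setminus E$. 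For a fixed $S$, the skew-symmetric solutions of $[S,K]=0$ form a subspace $V_S \subseteq \R^{n\times n}$ of dimension $\sum_r \binom{m_r}{2}$, where $m_1, m_2, \dots$ are the eigenvalue multiplicities of $S$; restricting the defining equations to matrices $K$ supported on $E$ yields a linear system $A_{S,E}\,x = 0$ in the $\pm1$ variables $x = (K_{ij})_{\{i,j\}\in E}$, with bounded integer coefficients read off from $S$. Hence, by \cref{odlyzko}, the innermost count is at most $2^{|E| - \operatorname{rank}(A_{S,E})}$, and also at most $\min(2^{\dim V_S}, 2^{|E|})$.

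The almost-symmetric regime is benign: if $|E| = o(n^2)$ there are only $2^{o(n^2)}$ admissible $E$, and at most $2^{\binom{n+1}{2}}$ pairs $(S,K)$ for each, so this part contributes $2^{(1/2 + o(1))n^2}$ --- already matching the conjectured count, via the $2^{\binom{n+1}{2}}$ symmetric matrices (and their relatives $\pm I + K$). The crux is the range $|E| = \alpha\binom{n}{2}$ with $\alpha$ bounded away from $0$ and $1$, where there are up to $2^{(\mathrm{h}(\alpha)/2 + o(1))n^2}$ choices of $E$, with $\mathrm{h}$ the binary entropy function; it then suffices to prove that for each such $E$,
\[
\sum_{S}\ \#\bigl\{K : \operatorname{supp}(K) = E,\ [S,K] = 0\bigr\}\ \leq\ 2^{\left(\tfrac{1-\mathrm{h}(\alpha)}{2} + o(1)\right)n^2}.
\]
Here the bound $2^{\dim V_S}$ is much too weak (e.g. for $S = \mu I + uu^T$ it is of order $2^{n^2/2}$, whereas imposing $\operatorname{supp}(K) = E$ for a dense graph $E$ on top of $[S,K]=0$ forces a count smaller by an exponential factor in $n^2$); the refinement $2^{|E| - \operatorname{rank}(A_{S,E})}$ is what lets the anti-concentration machinery enter. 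Indeed, once one checks --- using the structural fact that pseudorandom matrices contain many disjoint high-rank submatrices, exactly as in the proof of \cref{thm:halasz-int} --- that $A_{S,E}$ is ``spread out'' for the relevant $S$, one may apply \cref{thm:halasz-int,thm:halasz-sbp} in place of the crude rank bound, with an exponentially better constant.

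Two complementary phenomena should drive this estimate. On one hand, for a pseudorandom support $\binom{[n]}{2}\setminus E$ almost every sign pattern $S$ has simple spectrum, so $V_S = \{0\}$ and $K = 0$, which is impossible for $E \neq \emptyset$; dually, fixing $K$ first, a generic skew $K$ has a symmetric commutant of dimension only about $n/2$, far too small to contain a $\{0,\pm1\}$ matrix supported on a dense graph. On the other hand, the rare $S$ with a genuinely degenerate spectrum must be handled by stratifying on the multiplicity profile $(m_r)$: sign patterns $S$ with a prescribed profile are confined to a low-dimensional algebraic set, hence few (again by a combinatorial-dimension count), while the extreme profiles --- a single eigenvalue of multiplicity $n - O(1)$, which forces $S$ to be a scalar plus a bounded-rank matrix --- are realized by only $2^{o(n^2)}$ matrices and are absorbed there. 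The main obstacle is to make these two mechanisms quantitatively uniform over all dense $E$ simultaneously: one must rule out a conspiracy in which moderately many sign patterns $S$ each carry a moderately large commutant $V_S$ that moreover happens to contain a $\pm1$-valued skew matrix supported exactly on a prescribed dense graph $E$. Concretely, this reduces to a single inequality bounding, for every $E$ and every multiplicity profile, the product of (i) the number of symmetric $\{0,\pm1\}$ matrices $S$ with off-diagonal support $\binom{[n]}{2}\setminus E$ and that profile, and (ii) the number of $\pm1$ solutions of $A_{S,E}\,x = 0$. It is precisely here that the strength of \cref{thm:halasz-int,thm:halasz-sbp}, as opposed to \cref{thm:howard}, is essential, and where new input beyond the present paper is likely to be needed; we regard a satisfactory treatment of this joint count as capturing the core difficulty of the conjecture.
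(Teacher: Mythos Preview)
The statement you are addressing is a \emph{conjecture}, not a theorem: the paper does not prove it, and neither do you. What the paper actually establishes (\cref{thm:normal}) is a modest improvement of the Deneanu--Vu upper bound $2^{(c_{DV}+o(1))n^2}$ with $c_{DV}<0.698$, still very far from the conjectured exponent $1/2$. So there is no ``paper's own proof'' to compare against, and your write-up is (appropriately) a heuristic programme rather than a proof.

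That said, your approach is genuinely different from what the paper and \cite{deneanu2017random} do. They build the matrix row-and-column at a time, reducing each step to a linear system $T_k x_k = N'_k$ in the new $\{\pm1\}$ entries, and bound the solution count either by Odlyzko or by \cref{thm:halasz-int}; the improvement comes from showing that $T_k$ admits an $(r,\ell)$-rank partition often enough. Your decomposition $M=S+K$ and the reformulation as $[S,K]=0$ is a cleaner global picture, and the stratification by the spectral multiplicity profile of $S$ is a natural organizing principle that the inductive approach does not see. The trade-off is that the inductive approach gives an unconditional bound immediately (because one always has \emph{some} rank lower bound on $T_k$), whereas your approach postpones everything to the single ``joint count'' inequality you isolate at the end.

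The gap you identify is real and, as you say, is the heart of the matter. Let me sharpen one point: the bound $2^{\dim V_S}$ is not merely ``too weak'' for degenerate $S$ --- it can be catastrophically weak even after restricting to $\operatorname{supp}(K)=E$, because the linear map $A_{S,E}$ can have large kernel for \emph{structured} reasons unrelated to the spectrum of $S$ (e.g.\ when $E$ is a union of bipartite pieces aligned with eigenspaces of $S$). So the stratification by multiplicity profile alone will not suffice; one also needs control over how the eigenspace decomposition of $S$ interacts with the combinatorics of $E$, and this is exactly where ``uniformity over all dense $E$'' becomes delicate. The anti-concentration tools of this paper (\cref{thm:halasz-int,thm:halasz-sbp}) are designed to beat Odlyzko by a constant factor in the exponent, not to push the exponent all the way down to $1/2$; getting there would require either a much finer structural dichotomy or an entirely different mechanism. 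Your sketch is a reasonable roadmap, but it is not a proof, and the paper makes no claim to have one either.
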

As a first non-trivial step towards this conjecture, they showed the following. 
\begin{theorem}[Deneanu-Vu, \cite{deneanu2017random}]
\label{thm:DV}
The number of $n\times n$ $\{\pm 1\}$-valued normal matrices is at most $2^{(c_{DV}+o(1))n^{2}}$ for some constant $c_{DV} < 0.698$. 
\end{theorem}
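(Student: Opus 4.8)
The plan is to pass to probabilities: it suffices to show that a uniformly random $M\in\{\pm1\}^{n\times n}$ is normal with probability at most $2^{-(1-c_{DV}+o(1))n^2}$, and then multiply by $2^{n^2}$. The first step is to reformulate normality so that linear structure becomes visible. Write $M=S+A$ with $S=(M+M^T)/2$ symmetric and $A=(M-M^T)/2$ skew-symmetric; then $MM^T-M^TM=2(AS-SA)$, so $M$ is normal if and only if $A$ and $S$ commute. Moreover $M\mapsto(S,A)$ is a bijection onto the pairs in which $S$ has diagonal in $\{\pm1\}^n$ and, for each off-diagonal pair $\{i,j\}$, exactly one of $S_{ij},A_{ij}$ is nonzero and equals $\pm1$ (call $\{i,j\}$ a \emph{symmetric pair} or a \emph{skew pair} accordingly); the count of such pairs is indeed $2^n\cdot4^{\binom n2}=2^{n^2}$. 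Hence the number of $\{\pm1\}$-valued normal matrices equals the number of commuting pairs $(S,A)$ of this shape.

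The second step is to expose $S$ first — in particular its zero pattern $Z=\{\{i,j\}:S_{ij}=0\}$, which is precisely the set of skew pairs — and then count the admissible skew parts. For a fixed $S$ the admissible $A$ are the $\{\pm1\}$-vectors, indexed by the $|Z|$ positions of $Z$, lying in the linear subspace of skew matrices that are supported on $Z$ \emph{and} commute with $S$; diagonalizing $S$ shows the latter centralizer has dimension $\sum_i\binom{m_i}{2}$, where $m_1,\dots,m_r$ are the eigenvalue multiplicities of $S$. Odlyzko's lemma (\cref{odlyzko}) then bounds the number of admissible $A$ by $2^{\min(|Z|,\ \sum_i\binom{m_i}{2})}$. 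Two features make this useful: if $S$ has simple spectrum then the skew centralizer is trivial, so the only admissible $A$ is $0$, forcing $Z=\varnothing$ and $M=S$ symmetric; thus every normal $M$ that is not symmetric has a symmetric part with a genuinely repeated eigenvalue, i.e. a large repeated-eigenvalue block.

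The remaining work — and what I expect to be the main obstacle — is the bookkeeping: one must sum $2^{\min(|Z|,\ \sum_i\binom{m_i}{2})}$ over all symmetric $S$ of the prescribed shape, weighting each zero pattern $Z$ by the $2^{\,n+\binom n2-|Z|}$ sign choices available on the diagonal and the nonzero off-diagonal positions, and the whole sum should come out to $2^{(c_{DV}+o(1))n^2}$. The crude estimate, which discards the spectral constraint and uses only $\sum_i\binom{m_i}{2}\le\binom n2$ together with $\sum_Z\binom{\binom n2}{|Z|}2^{\binom n2-|Z|}=3^{\binom n2}$, is too weak (it yields roughly $2^{(\log_2 3)n^2/2}=2^{0.79\dots n^2}$, and in fact with $\min(|Z|,\cdot)\le|Z|$ it only recovers $2^{n^2}$). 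To push the exponent down to $0.698$ one has to exploit that a symmetric $\{-1,0,1\}$-matrix with an eigenvalue $\lambda$ of multiplicity $m$ satisfies $\operatorname{rank}(S-\lambda I)\le n-m$, and that matrices admitting such a low-rank shift are correspondingly rare; balancing the count of such $S$ against the Odlyzko gain $2^{\binom m2}$ coming from the large centralizer, and then optimizing the resulting exponent over the multiplicity profile and over the density of $Z$, is the technical heart. Quantifying "how rare" (an anti-concentration/rank statement for random $\{-1,0,1\}$ symmetric matrices) is where I would expect to spend most effort.

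An essentially equivalent route avoids the $(S,A)$ picture: expose the first $n-1$ rows $P$, and note that normality of $M=\binom{P}{x^T}$ forces, for all $i<j\le n-1$, the "defect" $D_{ij}:=\langle r_i,r_j\rangle-\sum_{k\le n-1}P_{ki}P_{kj}$ to equal $x_ix_j$; hence $D$ must be a rank-one matrix with $\{\pm1\}$ off-diagonal entries, after which the last row contributes only $O(1)$ further choices. One would then bound the number of such $P$ by anti-concentrating the $\binom{n-1}{2}$ scalar differences $D_{ij}$ (each of which is a signed random-walk sum and hence takes a prescribed small value with probability $\Theta(1/\sqrt n)$), while accounting for their correlations and for the rank-one consistency constraint — which is exactly the same essential difficulty in a different guise.
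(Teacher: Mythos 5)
Your decomposition $M=S+A$ and the associated bijection (diagonal in $\{\pm1\}$, exactly one of $S_{ij},A_{ij}$ nonzero per off-diagonal pair, giving $2^{n^2}$ matrices) are correct, as is the observation that normality is equivalent to $AS=SA$ and that the skew centralizer of $S$ has dimension $\sum_i\binom{m_i}{2}$. But this spectral route is genuinely different from Deneanu--Vu's proof, which never touches eigenvalue multiplicities at all. They instead pass to the more general notion of $N$-normality ($MM^T-M^TM=N$) so that the problem supports an induction, reveal the matrix one row-and-column at a time so that the newly exposed entries at step $k$ satisfy a linear system $T_kx_k=N_k'$, prove a \emph{Permutation Lemma} guaranteeing that after a simultaneous row/column permutation the ranks $\rank T_k$ follow a controlled two-parameter profile $R_{s,t}(k)$, bound each linear system via Odlyzko, and finally optimize a finite case analysis over $(s,t)$.

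On substance there is a real gap, and I think it is more than ``bookkeeping.'' The quantity you propose to bound, $\sum_S 2^{\min(|Z(S)|,\;\sum_i\binom{m_i(S)}{2})}$, is \emph{at least} $2^n\cdot 3^{\binom{n}{2}}\approx 2^{0.79\,n^2}$, since every one of the $2^n 3^{\binom{n}{2}}$ candidate symmetric parts $S$ contributes at least $2^0=1$ — and $0.79>0.698$. So a pure Odlyzko bound on the admissible $A$ per $S$ can never reach the claimed exponent: one must instead show that, for most $S$ with $Z\neq\varnothing$, the true count of admissible $A$ is \emph{zero}, i.e.\ that the skew centralizer of $S$, restricted to the coordinates $Z$, misses $\{\pm1\}^{Z}$ entirely. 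That is a quantitative rank/multiplicity rigidity statement for random sparse symmetric $\{-1,0,1\}$ matrices with a prescribed zero pattern (and over an uncountable family of shifts $\lambda$), which you correctly identify as the crux but do not supply, and which to my knowledge is not available in the literature — it would itself be a nontrivial random-matrix result. Your ``equivalent'' row-exposure reformulation via the defect matrix $D_{ij}$ faces the same unresolved anti-concentration problem with correlated entries. Deneanu--Vu's Permutation Lemma obtains usable rank lower bounds by purely combinatorial means and thereby sidesteps all of this, which is precisely what makes their argument close.
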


The problem of counting normal matrices also boils down to the problem of counting the number of solutions to some underdetermined system of linear equations, and using our framework, it is very easy to obtain an upper bound on the number of such matrices of the form $2^{(1-\alpha)n^2}$, for some $\alpha > 0$. Unfortunately, it does not seem that one can get $1-\alpha< c_{DV}$ using this simple method. However, the proof of \cref{thm:DV} in \cite{deneanu2017random} itself uses the Odlyzko bound at a certain stage; therefore, by using their strategy as a black-box, with the application of the Odlyzko bound at this stage replaced by our better bound, we obtain:   
\begin{theorem}
\label{thm:normal}
There exists some $\delta > 0$ such that the number of $n\times n$ $\{\pm 1\}$-valued normal matrices is at most $2^{(c_{DV}-\delta+o(1))n^{2}}$, where $c_{DV}$ denotes the constant in \cite{deneanu2017random}.  
\end{theorem}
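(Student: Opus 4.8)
The plan is to re-run the proof of \cref{thm:DV} from \cite{deneanu2017random} essentially unchanged, modifying only the single place where it appeals to Odlyzko's \cref{odlyzko} and substituting there the stronger estimate of \cref{thm:halasz-int}. Recall that a $\{\pm1\}$ matrix $M$ with rows $r_1,\dots,r_n$ and columns $c_1,\dots,c_n$ is normal if and only if $\langle r_i,r_j\rangle=\langle c_i,c_j\rangle$ for every pair $i\neq j$, so that enumerating normal matrices is again an instance of counting $\{\pm1\}$ solutions of a structured linear system. The argument of \cite{deneanu2017random} exposes the entries of $M$ in a prescribed order, and at one stage bounds the number of completions as follows: each of $\Theta(n)$ still-unexposed vectors $v$ (a row or a column of $M$) is forced to lie in a fixed affine subspace $W_v\subseteq\R^n$ of dimension $\Theta(n)$ --- the solution set of a linear system $A_v x=b_v$ whose rows are explicit integer combinations of already-exposed rows and columns of $M$ --- and the number of admissible $v$ is bounded by $|W_v\cap\{\pm1\}^n|\le 2^{\dim W_v}$. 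Since $A_v$, $b_v$, and $v$ all have integer entries, we may instead write $|W_v\cap\{\pm1\}^n|=2^n\Pr_\epsilon[A_v\epsilon=b_v]\le 2^n\sup_{u}\Pr_\epsilon[A_v\epsilon=u]$ and apply \cref{thm:halasz-int} to the columns of $A_v$: partitioning them into $\ell$ blocks --- with $\ell$ a small even constant, say $\ell=4$, and block $i$ of rank $r_i$ --- gives $|W_v\cap\{\pm1\}^n|\le 2^n\bigl(2^{-\ell}\binom{\ell}{\ell/2}\bigr)^{(r_1+\cdots+r_\ell)/\ell}$, which beats the Odlyzko bound by a factor $2^{-\Omega(n)}$ whenever $\rank(A_v)$ lies in an interval $[c_1 n,c_2 n]$ for suitable constants $0<c_1<c_2<1$ (so that $\ell$ disjoint column blocks each of rank $\approx\rank(A_v)$ fit among the $n$ columns) and $A_v$ is pseudorandom enough to make those blocks have near-full rank. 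One checks that $\rank(A_v)=n-\dim W_v$ does lie in such an interval at the relevant stage of \cite{deneanu2017random}, just as one verifies the analogous fact for $H_{k,n}$ in \cref{subsec-approach}.

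The content of the proof is therefore the pseudorandomness of the matrices $A_v$ occurring at this stage, in the sense that (by the greedy extraction after \cref{thm:halasz-int}) they have no large submatrix of rank deficient by a constant factor; alternatively one may invoke \cref{thm:halasz-sbp} and only require a partition into column blocks of stable rank $\Omega(n)$ with comparable Hilbert-Schmidt norm, which is frequently more convenient to check. Since every row of $A_v$ is a fixed combination of exposed rows and columns of $M$, a rank-deficient submatrix of $A_v$ reflects a large, unexpectedly low-rank sub-configuration of $M$ itself, and among \emph{normal} $\{\pm1\}$ matrices --- whose total number is already as small as $2^{(c_{DV}+o(1))n^2}$ --- such configurations must be atypical; making this precise, one bounds the number of normal matrices for which some $A_v$ at this stage is \emph{not} pseudorandom by a count that stays below $2^{(c_{DV}-\delta'+o(1))n^2}$ for some $\delta'>0$. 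On the complementary set of normal matrices, each of the $\Theta(n)$ factors treated by Odlyzko in \cite{deneanu2017random} improves by $2^{-\Omega(n)}$, so this set has size at most $2^{(c_{DV}-\delta+o(1))n^2}$; combining the two bounds yields \cref{thm:normal}.

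The main obstacle is precisely the control, promised in the previous paragraph, of normal matrices whose exposed part fails to be pseudorandom. The exposed portion of $M$ is not a uniformly random $\{\pm1\}$ array but is conditioned on all the normality relations uncovered so far, so one cannot simply quote a rank bound for random matrices; moreover, a matrix with a large rank-deficient block, while rare among \emph{all} $\{\pm1\}$ matrices, is not obviously rare among \emph{normal} ones, so one must argue inside the Deneanu-Vu enumeration --- showing that the partial configurations producing a non-pseudorandom $A_v$ contribute only a small fraction of the total count --- rather than by a crude external cardinality estimate. Choosing the thresholds (the dimensions and rank deficiency of the forbidden block, and the range of $\rank(A_v)$ on which \cref{thm:halasz-int} is invoked) so that the non-pseudorandom part is beaten by a fixed positive power of $2^{n^2}$ while enough steps remain pseudorandom for the per-step savings to compound to a genuine $2^{-\delta n^2}$ is the delicate point; it should be feasible because a large rank-deficient block is a strong structural restriction, but it requires tracking the exposure order of \cite{deneanu2017random} against the rank structure of $A_v$. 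A last, purely cosmetic, issue is that \cite{deneanu2017random} may describe $W_v$ as the orthogonal complement of a collection of exposed $\{\pm1\}$ vectors rather than as $\{x:A_v x=b_v\}$; the two descriptions coincide up to transposition, and \cref{thm:halasz-int} applies to either.
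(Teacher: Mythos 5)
Your high-level plan is the same as the paper's: re-run the Deneanu--Vu enumeration, replacing each Odlyzko application by \cref{thm:halasz-int} wherever the relevant coefficient matrix $T_k$ admits a good rank partition, and separately bound the contribution of partial matrices for which it does not. The paper carries this out via \cref{prop-normal-small-improvement} for $N$-normality (uniformly over $N$), and then in \cref{appendix-completing-normal-proof} slots this uniform bound into the Deneanu--Vu case analysis, improving only the bottleneck (their ``Case 6'').

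Where you go astray is in the assessment of the ``main obstacle.'' You worry that non-pseudorandom $A_v$, ``while rare among \emph{all} $\{\pm1\}$ matrices, is not obviously rare among \emph{normal} ones,'' and you say one cannot get away with ``a crude external cardinality estimate.'' In fact the paper does exactly a crude external cardinality estimate, and it works: \cref{lemma:large rank for many} bounds the number of \emph{arbitrary} $\{\pm1\}$ partial matrices $P\in\mathcal{P}_{\beta n}$ for which some $T_k(P)$ lacks a $(\gamma k,\ell_k)$-rank partition, with no conditioning on normality at all. Since $\mathcal{S}_{\beta n}(N)\subseteq\mathcal{P}_{\beta n}$, this already upper-bounds the non-pseudorandom part of $\mathcal{S}_{\beta n}(N)$ by $2^{(2\beta-\beta^2)n^2-(1-\gamma)^2kn'/4+o(n^2)}$, and the loss exponent is a positive constant times $n^2$ because $kn'=\Theta(\beta n^2)$. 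That this bare inclusion bound suffices is the point you are missing; no argument ``inside the Deneanu--Vu enumeration'' about conditioned randomness is needed for that step. The normality structure is used only in the complementary (pseudorandom) case, where $T_kx_k=N'_k$ is the linear system to which \cref{thm:halasz-int} is applied. The other thing your sketch leaves implicit --- and the paper handles concretely --- is that $\delta$ coming out of \cref{prop-normal-small-improvement} is tiny (on the order of $\beta^2$ with $\beta=2^{-10}$), so it does not beat $c_{DV}$ directly; one must thread it through the Deneanu--Vu $\mathcal{N}_{s,t}$ decomposition and improve specifically the case that determines $c_{DV}$, as done in \cref{appendix-completing-normal-proof}.
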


\section{Tools}
\subsection{The Fourier transform}
For $p\in [1,\infty)$, let $\L^p(\R^d)$ denote the set of functions $f\colon \R^d \to \C$ such that $\int_{\R^{d}}|f(x)|^{p}dx < \infty$. For $f\in \L^1(\R^d)$, the Fourier transform of $f$ -- denoted by $\widehat{f}$ -- is a function from $\R^{d}$ to $\C$ given by:
\[
\widehat{f}(\xi):=\int_{\R^{d}}f(x)e^{-2\pi i\langle x,\xi\rangle}dx,
\]
where $\langle x,\xi\rangle:=x_{1}\xi_{1}+\dots+x_{d}\xi_{d}$ denotes
the standard inner product on $\R^{d}$. For the reader's convenience, as well as to establish notation, we summarize the following basic properties of the Fourier transform which may be found in any standard textbook on analysis (see, e.g., \cite{rudin2006real}). 
\begin{itemize}
\item (Parseval's formula) Let $f,g \in \L^1(\R^d) \cap \L^2(\R^d)$. Then, the Fourier transforms $\widehat{f}, \widehat{g}$ are also in $\L^2(\R^d)$. Moreover, 
$$\int_{\R^d}f(x)\overline{g(x)}dx = \int_{\R^d}\widehat{f}(\xi)\overline{\widehat{g}(\xi)}d\xi.$$
\item (Convolution formula) For $f,g \in \L^1(\R^d)$, let $f \ast g\colon \R^d \to \C$ denote the convolution of $f$ and $g$ i.e. 
$$ f\ast g(x) = \int_{\R^d}f(x-y)g(y)dy.$$
Then, $f\ast g \in \L^1(\R^d)$, and for any $\xi \in \R^d$ 
$$\widehat{f \ast g}(\xi) = \widehat{f}(\xi)\widehat{g}(\xi).$$
\item (Fourier inversion) Let $f \in \L^1(\R^d)$ be such that $\widehat{f}$ is also in $\L^1(\R^d)$. Then, for any $x \in \R^d$
$$f(x) = \int_{\R^d}\widehat{f}(\xi)e^{2\pi i\langle x,\xi \rangle}d\xi.$$
\item (Fourier transform of autocorrelation) Let $f\in \L^1(\R^d)$ be real-valued, and let $h$ denote the autocorrelation of $f$ i.e. 
$$h(x) := \int_{\R^d}f(y)f(x+y)dy.$$ 
Then, for all $\xi \in \R^d$, 
$$\widehat{h}(\xi) = \left|\widehat{f}(\xi)\right|^{2}.$$ 
\end{itemize}

The notion of Fourier transform extends more generally to finite Borel measures on $\R^d$. For such a measure $\mu$, the Fourier transform is a function from $\R^{d}$ to $\C$ given by:
$$\widehat{\mu}(\xi) := \int_{\R^d}e^{-2\pi i \langle x,\xi \rangle}d\mu(x).$$
To see the connection with the Fourier transform for functions in $\L^1(\R^d)$, note that if the measure $\mu$ is absolutely continuous with respect to the Lebesgue measure $\lambda$, then the density (more precisely, the Radon-Nikodym derivative) $f_\mu:= d\mu/d\lambda$ is in $\L^1(\R^d)$, and we have $\widehat{\mu}(\xi) = \widehat{f_\mu}(\xi)$. 

The only finite Borel measures we will deal with are those which arise as distributions of random vectors valued in $\R^d$. For a $d$-dimensional random vector $X$, let $\mu_X$ denote its distribution. Then, we have (see, e.g., \cite{durrett2010probability}):
\begin{itemize}
\item (Fourier transform of independent random variables) Let $X_1,\dots,X_\ell$ be independent $d$-dimensional random vectors, and let $S_\ell := X_1 + \dots +X_\ell$ denote their sum. Then, for all $\xi \in \R^d$, 
$$\widehat{\mu_{S_\ell}}(\xi) = \prod_{i=1}^{\ell}\widehat{\mu_{X_i}}(\xi).$$ 
\item (Inversion at atoms) Let $X$ be a $d$-dimensional random vector. For any $x \in \R^d$, 
$$\mu_{X}(\{x\}) = \lim_{T_1,\dots,T_d\to \infty}\frac{1}{\text{vol}(B[T_1,\dots,T_d])}\int_{B[T_1,\dots,T_d]}e^{2\pi i \langle x, t\rangle}\widehat{\mu_{X}}(t)dt,$$
where $B[T_1,\dots, T_d]$ denotes the box $[-T_1,T_1]\times \dots \times [-T_d,T_d]$.  
\item (Fourier transform of origin-symmetric random vectors) Let $X$ be a $d$-dimensional, origin-symmetric random vector i.e. $\mu_{X}(x) = \mu_{X}(-x)$ for all $x\in \R^d$. Then, $\widehat{\mu_X}$ is a real-valued function.  
\end{itemize}

\subsection{Anti-concentration}
\label{subsection:anti-concentration}
\begin{definition}
For a random vector $X$ valued in $\R^d$, its (Euclidean) \emph{L\'evy concentration function} $\L(X,\cdot)$ is a function from $\R^{\geq 0}$ to $\R$ defined by:
$$\L(X,\delta) := \sup_{u\in \R^d}\Pr[\|X-u\|_2 \leq \delta].$$
\end{definition}
Anti-concentration inequalities seek to upper bound the L\'evy concentration function for various values of $\delta$. In the discrete setting, a particularly important case is $\delta=0$, which corresponds to the size of the largest atom in the distribution of the random variable $X$.
The proofs of our Hal\'asz-type inequalities will exploit two very general anti-concentration phenomena. 

The first principle states that sums of independent random variables do not concentrate much more than sums of suitable independent Gaussians. In particular, for the weighted sum of independent Rademacher variables, Erd\H{o}s gave a beautiful combinatorial proof to show (improving on a previous bound of Littlewood and Offord) the following.
\begin{theorem}[Erd\H{o}s, \cite{erdos1945lemma}]
\label{ELO}
Let $a=(a_{1},\dots,a_{n})$ be a vector in $\R^{n}$ all of whose
entries are nonzero. Let $S_{a}$ denote the random sum $\epsilon_{1}a_{1}+\dots+\epsilon_{n}a_{n}$, where the $\epsilon_{i}$'s are independent Rademacher random variables. Then, 
\[
\sup_{c\in\R}\Pr[S_{a}=c]\leq\frac{{n \choose \lfloor n/2\rfloor}}{2^{n}} \sim \sqrt{\frac{2}{\pi n}}.
\]
\end{theorem}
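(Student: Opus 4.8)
The plan is to run the classical argument of Erd\H{o}s, reducing the problem to Sperner's theorem on antichains in the Boolean lattice. First I would reduce to the case where all entries $a_i$ are positive: since each $\epsilon_i$ is a symmetric random variable, replacing $a_i$ by $-a_i$ (equivalently $\epsilon_i$ by $-\epsilon_i$) leaves the distribution of $S_a$ unchanged, so we may assume $a_1,\dots,a_n > 0$.

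Next I would encode sign patterns as subsets. Identify $\epsilon=(\epsilon_1,\dots,\epsilon_n)\in\{\pm1\}^n$ with the set $I(\epsilon):=\{i\in[n]:\epsilon_i=+1\}$; this is a bijection carrying the uniform measure on $\{\pm1\}^n$ to the uniform measure on the $2^n$ subsets of $[n]$. Writing $S_a=2\sum_{i\in I}a_i-\sum_{i=1}^n a_i$, the event $\{S_a=c\}$ becomes the event that $\sum_{i\in I}a_i = t$, where $t:=\tfrac12\bigl(c+\sum_i a_i\bigr)$. The key structural observation is that the family $\mathcal{F}_c:=\{I\subseteq[n]:\sum_{i\in I}a_i=t\}$ is an \emph{antichain}: if $I\subsetneq J$ with both in $\mathcal{F}_c$, then $\sum_{i\in J}a_i-\sum_{i\in I}a_i=\sum_{i\in J\setminus I}a_i>0$ since every $a_i$ is positive and $J\setminus I\neq\emptyset$, contradicting $\sum_{i\in I}a_i=\sum_{i\in J}a_i=t$.

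I would then invoke Sperner's theorem: every antichain in $2^{[n]}$ has size at most $\binom{n}{\lfloor n/2\rfloor}$. Hence $\Pr[S_a=c]=|\mathcal{F}_c|/2^n\le\binom{n}{\lfloor n/2\rfloor}/2^n$, and taking the supremum over $c\in\R$ gives the claimed inequality. The asymptotic $\binom{n}{\lfloor n/2\rfloor}/2^n\sim\sqrt{2/(\pi n)}$ is then a routine application of Stirling's formula.

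\textbf{Main obstacle.} There is no genuinely hard step here — the entire content is recognizing the antichain structure and quoting Sperner. The only nontrivial ingredient is Sperner's theorem itself; if a self-contained treatment is wanted, one would include its short proof, e.g.\ via the LYM inequality or via a symmetric chain decomposition of the Boolean lattice.
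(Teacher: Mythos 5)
Your proof is correct, and it is exactly the classical combinatorial argument of Erd\H{o}s (reduce to positive coefficients by symmetry, encode sign patterns as subsets, observe that the level set is an antichain, and apply Sperner's theorem). The paper does not reprove this result --- it cites \cite{erdos1945lemma} and explicitly calls it ``a beautiful combinatorial proof,'' which is the argument you gave --- so there is nothing to compare beyond noting that you have reproduced the intended proof faithfully.
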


Up to a constant, this was subsequently generalized by Rogozin to handle the L\'evy concentration function of sums of general independent random variables. 
\begin{theorem}[Rogozin, \cite{rogozin1961increase}]
\label{Rogozin's-thm}
There exists a universal constant $C>0$ such that for any independent
random variables $X_{1},\dots,X_{n}$, and any $r>0$, we have 
\[
\L({S_n},\delta)\leq\frac{C}{\sqrt{\sum_{i=1}^{n}\left(1-\L({X_{i}},\delta)\right)}},
\]
where $S_n:= X_1+\dots +X_n$.
\end{theorem}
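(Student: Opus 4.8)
The plan is to run the classical Fourier-analytic (Esseen-type) argument using the tools assembled above. Write $\mu$ for the law of $S_{n}=X_{1}+\dots+X_{n}$ and $\mu_{i}$ for the law of $X_{i}$, so that $\widehat{\mu}=\prod_{i=1}^{n}\widehat{\mu_{i}}$. \textbf{Step 1 (Esseen smoothing).} First I would establish
\[
\L(S_{n},\delta)\ \le\ \tfrac{\pi^{2}}{2}\,\delta\int_{-1/(2\delta)}^{1/(2\delta)}\bigl|\widehat{\mu}(t)\bigr|\,dt .
\]
Let $K(x)=(\sin\pi x/\pi x)^{2}$ be the Fej\'er kernel: $\widehat{K}(\xi)=(1-|\xi|)_{+}$ is supported in $[-1,1]$ with $0\le\widehat{K}\le1$, while $K\ge 4/\pi^{2}$ on $[-\tfrac12,\tfrac12]$ (monotonicity of $\sin s/s$ on $[0,\pi]$). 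Then $K_{T}(x):=T\,K(Tx)$ has $\widehat{K_{T}}(\xi)=(1-|\xi|/T)_{+}$, supported in $[-T,T]$, and $K_{T}\ge 4T/\pi^{2}$ on $|x|\le 1/(2T)$. Taking $T=1/(2\delta)$, the bound $\mathbf{1}_{\{|S_{n}-u|\le\delta\}}\le\tfrac{\pi^{2}}{4T}K_{T}(S_{n}-u)$ together with Fourier inversion for $K_{T}$ and $|\E e^{2\pi i\xi S_{n}}|=|\widehat{\mu}(\xi)|$ yields the displayed bound after taking $\sup_{u}$.

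\textbf{Step 2 (reduction, symmetrization).} Rescaling the $X_{i}$ we may take $\delta=1$; since $[-\tfrac12,\tfrac12]\subseteq[-1,1]$ and $|\widehat{\mu}|\ge0$, it suffices to bound $\int_{-1}^{1}|\widehat{\mu}(t)|\,dt$, and we may assume $D:=\sum_{i}\bigl(1-\L(X_{i},1)\bigr)$ is large (otherwise $\L(S_{n},1)\le1$ already gives the claim). Let $\tilde{X}_{i}:=X_{i}-X_{i}'$ be the symmetrization of $X_{i}$ (an independent copy subtracted). Then $|\widehat{\mu_{i}}(t)|^{2}=\widehat{\mu_{\tilde{X}_{i}}}(t)=\E\cos(2\pi t\tilde{X}_{i})\in[0,1]$, so $\log y\le y-1$ gives
\[
|\widehat{\mu}(t)|=\prod_{i=1}^{n}|\widehat{\mu_{i}}(t)|=\Bigl(\prod_{i=1}^{n}\widehat{\mu_{\tilde{X}_{i}}}(t)\Bigr)^{1/2}\le e^{-g(t)/2},\qquad g(t):=\sum_{i=1}^{n}\bigl(1-\widehat{\mu_{\tilde{X}_{i}}}(t)\bigr)\ge0,
\]
and the problem reduces to showing $\int_{-1}^{1}e^{-g(t)/2}\,dt\le C/\sqrt{D}$.

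\textbf{Step 3 (structural facts about $g$).} (i) Conditioning on $X_{i}'$ gives $\Pr[|\tilde{X}_{i}|\le1]\le\L(X_{i},1)$, hence $\Pr[|\tilde{X}_{i}|>1]\ge p_{i}:=1-\L(X_{i},1)$ and $D=\sum_{i}p_{i}$. (ii) Since $1-\cos(2\pi tx)\ge0$ and $\int_{-1}^{1}(1-\cos2\pi tx)\,dt=2-\tfrac{\sin2\pi x}{\pi x}\ge 2-\tfrac1\pi>1$ for $|x|\ge1$, Fubini gives $\int_{-1}^{1}\bigl(1-\widehat{\mu_{\tilde{X}_{i}}}(t)\bigr)\,dt\ge\Pr[|\tilde{X}_{i}|\ge1]\ge p_{i}$, so $\int_{-1}^{1}g\ge D$; the same computation on a dyadic block $I_{k}:=[2^{-k-1},2^{-k}]$ (substituting $t\mapsto2^{-k}s$) yields $\int_{I_{k}}\bigl(1-\widehat{\mu_{\tilde{X}_{i}}}\bigr)\ge c_{0}2^{-k}\Pr[|\tilde{X}_{i}|\ge2^{k}]$ with $c_{0}=\tfrac12-\tfrac1\pi$, and $\int_{I_{0}}g\ge c_{0}D$. (iii) From $\sin^{2}2\theta\le4\sin^{2}\theta$ one gets the \emph{doubling inequality} $1-\widehat{\mu_{\tilde{X}_{i}}}(2t)\le4\bigl(1-\widehat{\mu_{\tilde{X}_{i}}}(t)\bigr)$, so $g(2t)\le4g(t)$ for all $t$; in particular $\int_{I_{k+1}}g\ge\tfrac18\int_{I_{k}}g$, whence $\int_{I_{k}}g\ge c_{0}8^{-k}D$.

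\textbf{Step 4 (the main obstacle).} It remains to prove $\int_{-1}^{1}e^{-g(t)/2}\,dt\le C/\sqrt{D}$. This is a form of Esseen's estimate bounding a concentration function by the $L^{1}$-norm of its characteristic function, and it is the technical heart. The heuristic is transparent: $g$ behaves like $D$ times a ``$1-\cos$''-shaped profile, and $\int e^{-\frac{D}{2}(1-\cos2\pi ft)}\,dt=\Theta(1/\sqrt{D})$ for every frequency $f$ (the $\sim f$ near-Gaussian dips of width $\sim1/(f\sqrt{D})$ contribute a total independent of $f$). To make this rigorous I would partition $\{1,\dots,n\}$ according to the dyadic scale at which $\tilde{X}_{i}$ is spread: a group spread at scale $\asymp2^{j}$, of total mass $D_{j}:=\sum p_{i}$ over the group, contributes to $g(t)$ a factor bounded below by $\exp(-c\,t^{2}2^{2j}D_{j})$ for $|t|\le2^{-j}$ (using $1-\cos\theta\gtrsim\theta^{2}$ on $|\theta|\le\pi$ after truncating at this scale), which integrates to $\lesssim2^{-j}/\sqrt{D_{j}}$ over that range, while on the complementary range of $t$ the group is controlled on average by Step 3(ii)--(iii); assembling these Gaussian windows across scales against $\sum_{j}D_{j}=D$ gives the bound. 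Carrying this out carefully --- in particular handling groups whose $\widehat{\mu_{\tilde{X}_{i}}}$ has no single clean scale, and ruling out that rapid oscillation of $g$ inside a dyadic block distorts the window count --- is where essentially all the work lies; this is precisely the content of the Kolmogorov--Rogozin inequality, and the doubling inequality together with the $L^{1}$ lower bound of Step 3 are the structural facts that drive it.
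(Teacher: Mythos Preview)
The paper does not prove this statement: Theorem~\ref{Rogozin's-thm} is quoted from Rogozin~\cite{rogozin1961increase} as a known tool, with no proof supplied. So there is no ``paper's own proof'' to compare against.

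As for your sketch itself, Steps~1--3 set up the standard Esseen framework correctly, but Step~4 is not a proof: you explicitly write that carrying out the dyadic-scale decomposition and controlling the integral $\int_{-1}^{1}e^{-g(t)/2}\,dt$ ``is precisely the content of the Kolmogorov--Rogozin inequality,'' i.e.\ the content of the very theorem you are trying to prove. The heuristic you give (Gaussian windows at each dyadic scale, summed against $\sum_j D_j = D$) is the right intuition, but turning it into an actual bound requires the careful case analysis that Rogozin (or Esseen, or Kesten) carries out, and that work is entirely absent here. In particular, the doubling inequality and the $L^1$ lower bound on $g$ over dyadic blocks do not by themselves yield $\int e^{-g/2}\lesssim D^{-1/2}$: one needs a pointwise or level-set control on $g$, not just integral control, and your outline does not explain how to obtain it.
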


The second anti-concentration principle concerns random vectors of the form $AX$, where $A$ is a fixed $m\times n$ matrix, and $X=(X_1,\dots,X_n)$ is a random vector with independent coordinates. It states roughly that if the $X_i$'s are anti-concentrated on the line, and if $A$ has large rank in a suitable sense, then the random vector $AX$ is anti-concentrated in space \cite{rudelson2014small}. 

As a first illustration of this principle, we present the following lemma, which may be viewed as a `tensorization' of the Erd\H{o}s-Littlewood-Offord inequality. 
\begin{lemma}
\label{lemma:ELO-large-rank}
Let $A$ be an $m\times n$ matrix (where $m\leq n$) of rank $r$,
and let $X$ be a random vector distributed uniformly on $\{\pm1\}^{n}$.
Then for any $\ell\in\N$, 
\[
\sup_{u\in \R^{m}}\Pr[AX^{(1)}+\dots+AX^{(\ell)}=u]\leq\left(2^{-\ell}{\ell \choose \lfloor \ell/2\rfloor}\right)^{r},
\]
where $X^{(1)},\dots,X^{(\ell)}$ are i.i.d. copies of $X$. 
\end{lemma}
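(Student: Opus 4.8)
The plan is to give a short conditioning argument that reduces the statement to the one–dimensional Erd\H{o}s--Littlewood--Offord inequality (\cref{ELO}); this works uniformly in $\ell$ and is, I think, cleaner than the Fourier-analytic route (the latter being the prototype of the proof of the more general \cref{thm:atom-general}).

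\textbf{Step 1: reduce to a vector with i.i.d.\ coordinates.} Since $A$ acts linearly, $AX^{(1)}+\dots+AX^{(\ell)}=AY$ with $Y:=X^{(1)}+\dots+X^{(\ell)}$. Because the $X^{(j)}$ are independent and each has independent coordinates, the coordinates $Y_1,\dots,Y_n$ of $Y$ are independent, and each $Y_k=X^{(1)}_k+\dots+X^{(\ell)}_k$ is a sum of $\ell$ i.i.d.\ Rademacher variables; applying \cref{ELO} to the all-ones weight vector of length $\ell$ gives
\[
\sup_{c\in\R}\Pr[Y_k=c]\;\le\;2^{-\ell}\binom{\ell}{\lfloor\ell/2\rfloor}\;=:\;q_\ell .
\]
So it suffices to prove $\sup_{u\in\R^m}\Pr[AY=u]\le q_\ell^{\,r}$ whenever $Y$ has independent coordinates each satisfying this atom bound.

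\textbf{Step 2: use the rank via conditioning.} Pick $r$ linearly independent columns of $A$, indexed by $S\subseteq[n]$ with $|S|=r$, and write $AY=A_SY_S+A_{S^c}Y_{S^c}$, where $A_S$ is the $m\times r$ submatrix on the columns in $S$ and $Y_S,Y_{S^c}$ are the corresponding subvectors of $Y$. By the choice of $S$, $A_S$ has rank $r$, hence is injective as a linear map $\R^r\to\R^m$; therefore, for any $v\in\R^m$, the set $\{w\in\R^r:A_Sw=v\}$ is empty or a single point $w_0$, so by independence of the coordinates of $Y$,
\[
\Pr[A_SY_S=v]\;\le\;\Pr[Y_S=w_0]\;=\;\prod_{k\in S}\Pr[Y_k=(w_0)_k]\;\le\;q_\ell^{\,r}.
\]
Conditioning on $Y_{S^c}$, which is independent of $Y_S$, yields $\Pr[AY=u]=\E_{Y_{S^c}}\big[\Pr[A_SY_S=u-A_{S^c}Y_{S^c}\mid Y_{S^c}]\big]\le q_\ell^{\,r}$ for every $u$, which is the claim.

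\textbf{On the difficulty.} There is essentially no obstacle here: the two things to get right are (i) checking that $Y$ really has i.i.d.\ coordinates with largest atom at most $q_\ell$, and (ii) using injectivity of $A_S$ to turn the event $\{A_SY_S=v\}$ into the event that $Y_S$ takes one prescribed value. If one instead wanted a Fourier proof (matching the method used later for \cref{thm:atom-general}), one would write $\widehat{\mu_{AY}}(t)=\prod_{k=1}^n\cos(2\pi\langle c_k,t\rangle)^\ell$ with $c_k$ the $k$-th column of $A$, bound $\Pr[AY=u]$ by the large-box average of $|\widehat{\mu_{AY}}|$ via inversion at atoms, discard all but $r$ linearly independent columns, and compute the remaining average by a linear change of variables together with Weyl equidistribution; this is painless when $\ell$ is even (then $\prod\cos^\ell$ is a trigonometric polynomial), but it loses a constant for odd $\ell$, so the elementary argument above is preferable for the stated bound.
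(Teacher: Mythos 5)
Your proof is correct and takes essentially the same approach as the paper: both reduce to $r$ linearly independent columns of $A$, condition on the remaining coordinates, and then apply the Erd\H{o}s--Littlewood--Offord inequality (\cref{ELO}) coordinatewise to the $r$ retained columns. The only difference is presentational — the paper exposes an $I_r$ block by left-multiplying with a block inverse and projecting, whereas you use the injectivity of the $m\times r$ submatrix $A_S$ directly, a mild streamlining of the same conditioning argument.
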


\begin{proof}
By relabeling the coordinates if needed, we may write $A$ as a
block matrix $\left(\begin{array}{cc}
E & F\\
G & H
\end{array}\right)$where $E$ is an $r\times r$ invertible matrix, $F$ is an $r\times(n-r)$
matrix, $G$ is a $(m-r)\times r$ matrix, and $H$ is a $(m-r)\times(n-r)$
matrix. Let $B$ denote the invertible $m\times m$ matrix $\left(\begin{array}{cc}
E^{-1} & 0\\
0 & I_{m-r}
\end{array}\right)$, and note that $BA=\left(\begin{array}{cc}
I_{r} & \ast\\
\ast & \ast
\end{array}\right)$. For a vector $v\in\R^{s}$ with $s\geq r$, let $Q_{r}(v)\in\R^{r}$
denote the vector consisting of the first $r$ coordinates of $v$.
Also, let $X_{i}^{(j)}$ denote the $i^{th}$ coordinate of the random
vector $X^{(j)}$, let $\mathcal{R}$ denote the collection of
random variables $\{X_{r+1}^{(1)},\dots,X_{n}^{(1)},\dots X_{r+1}^{(\ell)},\dots,X_{n}^{(\ell)}\}$, and let $\mathcal{S}$ denote the collection of random variables $\{X^{(1)}_{1},\dots,X^{(1)}_{r},\dots,X^{(\ell)}_{1},\dots,X^{(\ell)}_{r}\}$.
Then for any $u \in \R^m$, we have:
\begin{eqnarray*}
\Pr\left[AX^{(1)}+\dots+AX^{(\ell)}=u\right] & = & \Pr\left[BAX^{(1)}+\dots+BAX^{(\ell)}=Bu\right]\\
 & \leq & \Pr\left[Q_{r}(BAX^{(1)})+\dots+Q_{r}(BAX^{(\ell)})=Q_{r}(Bu)\right]\\
 & = & \E_{\mathcal{R}}\left[\Pr_{\mathcal{S}}\left[Q_{r}(BAX^{(1)})+\dots+Q_{r}(BAX^{(\ell)})=Q_r(Bu)|\mathcal{R}\right]\right]\\
 & = & \E_{\mathcal{R}}\left[\Pr_{\mathcal{S}}\left[Q_{r}(X^{(1)})+\dots+Q_{r}(X^{(\ell)})=f(\mathcal{R})|\mathcal{R}\right]\right]\\
 & = & \E_{\mathcal{R}}\left[\prod_{i=1}^{r}\Pr\left[X_{i}^{(1)}+\dots+X_{i}^{(\ell)}=f_{i}(\mathcal{R})|\mathcal{R}\right]\right]\\
 & \leq & \E_{\mathcal{R}}\left[\prod_{i=1}^{r}2^{-\ell}{\ell \choose \ell/2}\right]\\
 & = & \left(2^{-\ell}{\ell \choose \ell/2}\right)^{r},
\end{eqnarray*}
where the third line follows from the law of total probability; the
fourth line follows from the explicit form of $BA$ mentioned above;
the fifth line follows from the independence of the coordinates of
$X^{(j)}$; and the sixth line follows from the Erd\H{o}s-Littlewood-Offord inequality (\cref{ELO}). Taking the supremum over $u \in \R^{m}$ completes the proof. 
\end{proof}

\begin{remark}
\label{rmk:elo-rogozin-large-rank}
By using Rogozin's inequality (\cref{Rogozin's-thm}) instead of the Erd\H{o}s-Littlewood-Offord inequality, we may generalize the lemma to handle any random vector $X=(X_1,\dots,X_n)$ with independent coordinates $X_i$, provided we replace the conclusion by 
\[
\sup_{u\in \R^{m}}\Pr[AX^{(1)}+\dots+AX^{(\ell)}=u]\leq \left(\frac{C}{\ell}\right)^{r/2}\times \max_{I\subseteq[n], |I|=r}\prod_{i\in I}\frac{1}{\sqrt{1-\L({X_i},0)}},
\]
where $C$ is a universal constant. 
\end{remark}

For the L\'evy concentration function for general $\delta$, a version of \cref{lemma:ELO-large-rank} was proved by Rudelson and Vershynin in \cite{rudelson2014small}. 

\begin{theorem}[Rudelson-Vershynin, \cite{rudelson2014small}]
\label{thm:RV}
Consider a random vector $X=(X_{1},\dots,X_{d})$ where $X_{i}$ are
real-valued independent random variables. Let $\delta,\rho\geq0$ be such that for all $i\in[d]$,
\[
\L(X_{i},\delta)\leq\rho.
\]
Then, for every $m\times n$ matrix $A$, every $M\geq1$ and every
$\varepsilon\in(0,1)$, we have 
\[
\L\left(AX,M\delta\|A\|_{\text{HS}}\right)\leq\left(C_{\varepsilon}M\rho\right)^{\lceil(1-\varepsilon)r_{s}(A)\rceil},
\]
where $C_{\varepsilon}=C/\sqrt{\varepsilon}$ for some absolute constant
$C > 0$. 
\end{theorem}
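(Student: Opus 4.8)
This is the small-ball inequality of Rudelson and Vershynin, and the natural route is the Fourier-analytic (Esseen-type) method combined with a tensorization step that exploits the independence of the $X_i$'s. I would start with two reductions. First, factor $A=U\widetilde A$ with $U$ an isometry onto the column space of $A$ and $\widetilde A$ of size $(\text{rank}\,A)\times n$; since $U$ preserves Euclidean norms this changes none of $\L(AX,\cdot)$, $\|A\|_{\text{HS}}$, $r_s(A)$, so one may assume $A$ has full row rank. Second — and this is where $r_s(A)$ and the loss $\varepsilon$, $C_\varepsilon=C/\sqrt\varepsilon$ enter — one passes to a \emph{well-conditioned} submatrix: by a restricted-invertibility (Bourgain--Tzafriri / Spielman--Srivastava) selection of columns one finds $\sigma\subseteq[n]$ with $|\sigma|\ge\lceil(1-\varepsilon)r_s(A)\rceil$ such that $A_\sigma$ has all its singular values comparable, on the appropriate scale, to $\sqrt\varepsilon\,\|A\|_{\text{HS}}/\sqrt n$; since $X_{\sigma^c}$ is independent of $X_\sigma$ we have $\L(AX,t)\le\L(A_\sigma X_\sigma,t)$, and one is reduced to proving the inequality for such a well-conditioned matrix (the vacuous case $C_\varepsilon M\rho\ge1$ being discarded first, after which the $\varepsilon$-loss in the exponent is harmless).

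For the main estimate I would use the Gaussian-smoothing form of Esseen's inequality: for any $s>0$ and any random vector $Y$ in $\R^d$, $\L(Y,R)\le e^{R^2/(2s^2)}\,\E_{g\sim N(0,I_d)}\big[\,|\widehat{\mu_Y}(g/s)|\,\big]$ (apply $\mathbf 1_{\{\|z\|\le R\}}\le e^{(R^2-\|z\|^2)/(2s^2)}$ and represent $e^{-\|z\|^2/(2s^2)}$ as a Gaussian average). Applied to $Y=A_\sigma X_\sigma$, the independence of the coordinates lets the characteristic function factor over the columns $c_k$ of $A_\sigma$, $\widehat{\mu_{A_\sigma X_\sigma}}(\xi)=\prod_k\widehat{\mu_{X_k}}(\langle c_k,\xi\rangle)$, and each scalar factor is controlled through the input `anti-concentration $\Rightarrow$ characteristic-function decay': symmetrizing gives $|\widehat{\mu_{X_k}}(u)|^2=\E\cos\!\big(u(X_k-X'_k)\big)$, and combining $1-\cos\theta\gtrsim\text{dist}(\theta,2\pi\Z)^2$ with $\Pr[\,|X_k-X'_k|\ge\delta\,]\ge1-\rho$ yields a quantitative lower bound on $1-|\widehat{\mu_{X_k}}(u)|^2$ (an upper bound on $|\widehat{\mu_{X_k}}(u)|$ of order $\rho^{1/2}$ in the averaged regime) valid whenever $|u|\delta\lesssim1$ — which is exactly the relevant range, since $|\langle c_k,g/s\rangle|\lesssim\|A_\sigma\|/s$ and $s$ is chosen of order $R/\sqrt d$. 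Carrying out the Gaussian integral over $g$ then multiplies the $d$ near-independent directions together and produces the claimed $(C'M\rho)^d$; optimizing $s$ balances the prefactor $e^{R^2/(2s^2)}$ against the decay of $\widehat\mu$ and fixes the exact constant.

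The step I expect to be the real obstacle is precisely this integration, because the columns $c_k$ are in general far from orthogonal, so the scalars $\langle c_k,g\rangle$ are strongly correlated across $k$ and $\E_g\big[\prod_k|\widehat{\mu_{X_k}}(\langle c_k,g\rangle/s)|\big]$ cannot simply be factored. The clean way around this is the identity $\E_g\big[\,|\widehat{\mu_{A_\sigma X_\sigma}}(g/s)|^2\,\big]=\E_{Z}\big[\exp(-\|A_\sigma Z\|_2^2/(2s^2))\big]$ with $Z=X_\sigma-X'_\sigma$, which reduces matters to lower-bounding the quadratic form $\|A_\sigma Z\|_2^2$ through the eigenstructure of $A_\sigma^\top A_\sigma$ and the anti-concentration of the $Z_k$. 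The subtlety is that a first-moment bound on $\|A_\sigma Z\|_2^2$ only beats $R^2$ by an $O(1)$ factor, so it yields a constant rather than exponential-in-$d$ decay; one must instead apply exponential Markov to $\|A_\sigma Z\|_2^2$ and estimate each coordinate's contribution by a dyadic decomposition (splitting $Z_k$ into shells $j\delta\le|Z_k|<(j+1)\delta$ and bounding each shell by $(j+1)\rho$), or equivalently integrate the $X_k$ out one at a time against the quadratic form — this is what recovers the full rank $d$ in the exponent and, together with the well-conditioning secured in the reductions, the factor $1/\sqrt\varepsilon$ in $C_\varepsilon$. Finally, in the degenerate range $\delta\to0$ (atoms) the analytic machinery can be bypassed altogether: passing to a maximal invertible submatrix supported on the large singular directions and changing variables reduces everything to applying the Erd\H{o}s--Littlewood--Offord and Rogozin inequalities coordinate-wise, exactly as in \cref{lemma:ELO-large-rank} and \cref{rmk:elo-rogozin-large-rank}.
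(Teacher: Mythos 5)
This theorem is not proved in the paper --- it is quoted from Rudelson and Vershynin \cite{rudelson2014small} and used as a black box; after stating it the authors only remark that more general versions appear in that reference. So there is no in-paper argument for your proposal to match or diverge from. That said, your blind sketch does track the actual Rudelson--Vershynin argument reasonably well: the Esseen/Gaussian-smoothing bound $\L(Y,R)\le e^{R^2/2s^2}\E_g|\widehat{\mu_Y}(g/s)|$, the ``anti-concentration $\Rightarrow$ characteristic-function decay'' estimate via symmetrization and $1-\cos\theta\gtrsim \mathrm{dist}(\theta,2\pi\Z)^2$, and a restricted-invertibility selection of roughly $\lceil(1-\varepsilon)r_s(A)\rceil$ columns whose smallest singular value is $\gtrsim\sqrt{\varepsilon}\,\|A\|_{\mathrm{HS}}/\sqrt n$ are all genuine ingredients, and you correctly flag the Gaussian integral over the correlated inner products $\langle c_k,g\rangle$ as the crux, to be handled via $\E_g|\widehat{\mu_{A_\sigma X_\sigma}}(g/s)|^2=\E_Z\exp(-\|A_\sigma Z\|^2/2s^2)$. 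What the sketch leaves unresolved is the quantitative bookkeeping: applying Cauchy--Schwarz to pass from $\E_g|\widehat{\mu}|$ to $(\E_g|\widehat{\mu}|^2)^{1/2}$ halves the exponent, so to land on $(C_\varepsilon M\rho)^{\lceil(1-\varepsilon)r_s(A)\rceil}$ rather than its square root one must either avoid that step (e.g., via $|\widehat{\mu_{X_k}}|\le\exp(-\tfrac12(1-|\widehat{\mu_{X_k}}|^2))$ before integrating) or argue that each coordinate factor $\E\exp(-cZ_k^2)$ is of order $\rho^2/\sqrt{c\delta^2}$, and it is exactly this shell-decomposition computation, together with the optimization over $s$, that produces the $1/\sqrt{\varepsilon}$ in $C_\varepsilon$. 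As a road map the proposal is sound, but for a complete proof one should simply follow \cite{rudelson2014small}.
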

More general statements of a similar nature may be found in \cite{rudelson2014small}. 

\subsection{The replication trick}
\label{subsection:replication}
In this section, we present the `replication trick', which allows us to reduce considerations about anti-concentration of sums of independent random vectors to considerations about anti-concentration of sums of independent $\emph{identically distributed}$ random vectors. This will be useful since the `correct' analog of Rogozin's inequality for general random vectors with independently coordinates is not available; to our knowledge, the best result in this direction is due to Esseen \cite{esseen1966kolmogorov}, who proved an inequality of this form for such random vectors satisfying  additional symmetry conditions, which will not be available in our applications. The statement/proof of the `atomic' version of the replication trick (\cref{prop:replication-atom}) is similar in spirit to Corollaries 7.12 and 7.13 in \cite{tao2006additive} with an important difference: we have no need for the lossy `domination' and `duplication' steps in \cite{tao2006additive}; instead, we ensure the non-negativity of the Fourier transform at various places by using the previously stated simple fact that the Fourier transform of the distribution of an origin-symmetric random vector is real valued, and restricting ourselves to even powers thereof. 
\begin{proposition}
\label{prop:replication-atom}
Let $X_{1},\dots,X_{n}$ be independent random vectors valued in $\R^{d}$.
For each $i\in[n]$, let $\tilde{X}_{i}:=X_{i}-X_{i}'$, where $X_{i}'$
is an independent copy of $X_{i}$. Let $S_{n}:=X_{1}+\dots+X_{n}$,
and for any $i\in[n]$, $m\in\N$, let $\tilde{S}_{i,m}:=\tilde{X}_{i}^{(1)}+\dots\tilde{X}_{i}^{(m)}$,
where $\tilde{X}_{i}^{(1)},\dots,\tilde{X}_{i}^{(m)}$ are independent
copies of $\tilde{X}_{i}$. Then for any $v\in\R^{d}$,
\[
\Pr\left[S_{n}=v\right]\leq\prod_{i=1}^{n}\Pr\left[\tilde{S}_{i,a_{i}/2}=0\right]^{\frac{1}{a_{i}}}
\]
for any $a_{1},\dots,a_{n}\in4\cdot\N$ 
such that $a_{1}^{-1}+\dots+a_{n}^{-1}=1$.
\end{proposition}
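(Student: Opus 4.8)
The plan is to pass to the Fourier side, where products of independent sums become products of characteristic functions, and then push everything through Hölder's inequality with the exponents $a_i$. First I would invoke the "inversion at atoms" formula to write
\[
\Pr[S_n = v] = \lim_{T_1,\dots,T_d\to\infty}\frac{1}{\text{vol}(B[T_1,\dots,T_d])}\int_{B[T_1,\dots,T_d]}e^{2\pi i\langle v,t\rangle}\prod_{i=1}^n\widehat{\mu_{X_i}}(t)\,dt,
\]
using the independence of the $X_i$ to factor $\widehat{\mu_{S_n}} = \prod_i \widehat{\mu_{X_i}}$. The key point is that $|\widehat{\mu_{X_i}}(t)| \le 1$ pointwise (it is the characteristic function of a probability measure), so bounding the integrand by $\prod_i |\widehat{\mu_{X_i}}(t)|$ and then applying the generalized Hölder inequality with exponents $a_1,\dots,a_n$ (which satisfy $\sum a_i^{-1} = 1$) gives, for each box $B$,
\[
\left|\frac{1}{\text{vol}(B)}\int_B e^{2\pi i\langle v,t\rangle}\prod_{i=1}^n\widehat{\mu_{X_i}}(t)\,dt\right| \le \prod_{i=1}^n\left(\frac{1}{\text{vol}(B)}\int_B |\widehat{\mu_{X_i}}(t)|^{a_i}\,dt\right)^{1/a_i}.
\]

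Next I would identify $|\widehat{\mu_{X_i}}(t)|^{a_i}$ with a genuine characteristic function. Since $\tilde X_i = X_i - X_i'$ is origin-symmetric, $\widehat{\mu_{\tilde X_i}}(t) = |\widehat{\mu_{X_i}}(t)|^2$ is real and non-negative; hence $|\widehat{\mu_{X_i}}(t)|^{a_i} = \widehat{\mu_{\tilde X_i}}(t)^{a_i/2} = \widehat{\mu_{\tilde S_{i,a_i/2}}}(t)$, using that $a_i/2 \in 2\N$ is an integer and that the Fourier transform of a sum of i.i.d.\ copies is the power of the individual transform. (This is precisely the place the hypothesis $a_i \in 4\N$ is used: it makes $a_i/2$ an even positive integer so that $\widehat{\mu_{\tilde S_{i,a_i/2}}}$ is itself non-negative, which is needed for the averaged integral to be monotone/well-behaved and to equal the atom probability below.) Then, applying the inversion-at-atoms formula once more — this time to the origin-symmetric random vector $\tilde S_{i,a_i/2}$ at the point $0$ — gives
\[
\lim_{T_1,\dots,T_d\to\infty}\frac{1}{\text{vol}(B[T_1,\dots,T_d])}\int_{B[T_1,\dots,T_d]} \widehat{\mu_{\tilde S_{i,a_i/2}}}(t)\,dt = \Pr[\tilde S_{i,a_i/2} = 0],
\]
with the $e^{2\pi i\langle 0,t\rangle}$ factor being trivial.

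Finally I would combine the pieces: take the limit over the boxes on the left-hand side (where it exists and equals $\Pr[S_n=v]$), and on the right-hand side bound each factor by its limsup, which by the previous display is exactly $\Pr[\tilde S_{i,a_i/2}=0]^{1/a_i}$. This yields $\Pr[S_n = v] \le \prod_{i=1}^n \Pr[\tilde S_{i,a_i/2} = 0]^{1/a_i}$, as desired.

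The main obstacle I anticipate is analytic bookkeeping with the box limits: the inversion-at-atoms formula is stated as a limit over $B[T_1,\dots,T_d]$, and Hölder's inequality is being applied to the pre-limit averaged integrals, so one must argue that the limit of a product of box-averages is controlled by the product of the limits of the box-averages. This is clean precisely because each $|\widehat{\mu_{X_i}}|^{a_i} \ge 0$, so the box-averages on the right are non-negative and one can pass to $\limsup$ factor by factor without worrying about cancellation; the non-negativity granted by restricting to even powers (the role of $a_i \in 4\N$) is exactly what makes this step safe, mirroring the remark in \cref{subsection:replication} that this avoids the lossy domination/duplication steps of \cite{tao2006additive}. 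A secondary point to handle carefully is that the atom-inversion limit genuinely exists for $S_n$ at $v$ and for each $\tilde S_{i,a_i/2}$ at $0$; both follow from the cited "inversion at atoms" property since these are honest distributions of $\R^d$-valued random vectors.
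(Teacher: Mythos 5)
Your proposal is correct and follows essentially the same route as the paper's proof: inversion at atoms, factoring $\widehat{\mu_{S_n}}$ by independence, generalized H\"older with exponents $a_i$ (absorbing the $\mathrm{vol}(B)^{-1}$ via $\sum a_i^{-1}=1$), the identity $|\widehat{\mu_{X_i}}|^{a_i}=(\widehat{\mu_{\tilde X_i}})^{a_i/2}=\widehat{\mu_{\tilde S_{i,a_i/2}}}$, and a second application of inversion at atoms at $0$. One small correction to your commentary: you claim $a_i\in 4\N$ is what makes $\widehat{\mu_{\tilde S_{i,a_i/2}}}$ non-negative, but in fact $\widehat{\mu_{\tilde X_i}}=|\widehat{\mu_{X_i}}|^2\ge 0$ already, so any positive integer power is non-negative and only $a_i\in 2\N$ (so that $a_i/2\in\N$ and $\tilde S_{i,a_i/2}$ is a well-defined sum) is strictly needed by the argument; the stated hypothesis $a_i\in 4\N$ is merely a (slightly) stronger sufficient condition, not the mechanism behind non-negativity.
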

Here, $4\cdot \N$ denotes the subset of natural numbers given by $\{4m \colon m \in \N\}$.
\begin{proof}
As before, we let $\mu_X$ denote the distribution of the $d$-dimensional random vector $X$. We have:
\begin{eqnarray*}
\mu_{S_{n}}(v) & = & \lim_{T_{1},\dots,T_{d}\to\infty}\frac{1}{\text{vol}B[T_{1},\dots,T_{d}]}\int_{B[T_{1},\dots,T_{d}]}e^{-i\langle t,v\rangle}\widehat{\mu_{S_{n}}}(t)dt\\
 & = & \lim_{T_{1},\dots,T_{d}\to\infty}\frac{1}{\text{vol}B[T_{1},\dots,T_{d}]}\int_{B[T_{1},\dots,T_{d}]}e^{-i\langle t,v\rangle}\prod_{i=1}^{n}\widehat{\mu_{X_{i}}}(t)dt\\
 & \leq & \lim_{T_{1},\dots,T_{d}\to\infty}\frac{1}{\text{vol}B[T_{1},\dots,T_{d}]}\prod_{i=1}^{n}\left(\int_{B[T_{1},\dots,T_{d}]}\left|\widehat{\mu_{X_{i}}}(t)\right|^{a_{i}}dt\right)^{\frac{1}{a_{i}}}\\
 & = & \lim_{T_{1},\dots,T_{d}\to\infty}\frac{1}{\text{vol}B[T_{1},\dots,T_{d}]}\prod_{i=1}^{n}\left(\int_{B[T_{1},\dots,T_{d}]}\left(\widehat{\mu_{\tilde{X_{i}}}}(t)\right)^{\frac{a_{i}}{2}}dt\right)^{\frac{1}{a_{i}}}\\
 & = & \lim_{T_{1},\dots,T_{d}\to\infty}\frac{1}{\text{vol}B[T_{1},\dots,T_{d}]}\prod_{i=1}^{n}\left(\int_{B[T_{1},\dots,T_{d}]}\widehat{\mu_{\tilde{S}_{i,a_{i}/2}}}(t)dt\right)^{\frac{1}{a_{i}}}\\
 & = & \prod_{i=1}^{n}\left(\lim_{T_{1},\dots,T_{d}\to\infty}\frac{1}{\text{vol}B[T_{1},\dots,T_{d}]}\int_{B[T_{1},\dots,T_{d}]}\widehat{\mu_{\tilde{S}_{i,a_{i}/2}}}(t)dt\right)^{\frac{1}{a_{i}}}\\
 & = & \prod_{i=1}^{n}\left(\mu_{\tilde{S}_{i,a_{i}/2}}(0)\right)^{\frac{1}{a_{i}}},
\end{eqnarray*}
where the first line follows from the Fourier inversion formula at atoms; the second line follows from the independence of $X_1,\dots,X_n$; the third line follows from H\"older's inequality; the fourth line follows from the fact that $\widehat{\mu_{\tilde{X}_i}}(t) = |\widehat{\mu_{X_i}}(t)|^{2}$ (since the distribution of $\tilde{X_i}$ is the autocorrelation of the distribution of $X_i$); the fifth line follows from the independence of $\tilde{X}_i^{(1)}\dots,\tilde{X}_i^{(a_i/2)}$; and the last line follows again from the Fourier inversion formula at atoms. 
\end{proof}
\begin{remark}
\label{rmk:replication-atom}
The same proof shows that when $X_{1},\dots,X_{n}$ are independent
\emph{origin symmetric }random vectors, then for any $v\in\R^{d}$
\[
\Pr\left[S_{n}=v\right]\leq\prod_{i=1}^{n}\Pr\left[S_{i,a_{i}}=0\right]^{\frac{1}{a_{i}}}
\]
for any $a_{1},\dots,a_{n}\in2\N$ such that $a_{1}^{-1}+\dots+a_{n}^{-1}=1$, where $S_{i,a_i}$ denotes the sum of $a_i$ independent copies of $X_i$.  
\end{remark}
The next proposition is a version of \cref{prop:replication-atom} for the L\'evy concentration function. Essentially the same proof can also be used to prove variants for norms other than the Euclidean norm. 
\begin{proposition}
\label{prop:replication-sbp}
Let $X_{1},\dots,X_{n}$ be independent random vectors valued in $\R^{d}$.
For each $i\in[n]$, let $\tilde{X}_{i}:=X_{i}-X_{i}'$, where $X_{i}'$
is an independent copy of $X_{i}$. Let $S_{n}:=X_{1}+\dots+X_{n}$,
and for any $i\in[n]$, $m\in\N$, let $\tilde{S}_{i,m}:=\tilde{X}_{i}^{(1)}+\dots\tilde{X}_{i}^{(m)}$,
where $\tilde{X}_{i}^{(1)},\dots,\tilde{X}_{i}^{(m)}$ are independent
copies of $\tilde{X}_{i}$. Then for any $\delta > 0$,
\[
\L(S_n,\delta)\leq2^{d}\prod_{i=1}^{n}\L(\tilde{S}_{i,a_i/2},4\delta)^{1/a_i}
\]
for any $a_{1},\dots,a_{n}\in4\N$ such that $a_{1}^{-1}+\dots+a_{n}^{-1}=1$.
\end{proposition}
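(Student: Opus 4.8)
The plan is to run the same Fourier-analytic scheme as in the proof of \cref{prop:replication-atom}, but to replace the inversion formula \emph{at atoms} (which only detects point masses) by a \emph{smoothing} step that detects the whole $\delta$-ball. First I would fix a test function $\psi\colon\R^d\to\R^{\ge 0}$ with three properties: (i) $\psi\ge\mathbf 1_{B(0,\delta)}$ pointwise; (ii) $\widehat\psi\ge 0$ pointwise; and (iii) $\psi\le 2^d\,\mathbf 1_{B(0,R\delta)}$ for an absolute constant $R$. The canonical choice is a rescaled self-convolution of a ball indicator: with $u:=\mathbf 1_{B(0,\delta)}/\mathrm{vol}(B(0,\delta))$, set $\psi:=2^d\,\mathrm{vol}(B(0,\delta))\,(u\ast u)$. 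Then (ii) is immediate from $\widehat\psi=2^d\,\mathrm{vol}(B(0,\delta))\,|\widehat u|^2$; (iii) holds because $u\ast u$ is supported in $B(0,2\delta)$ and, by Cauchy--Schwarz, maximized at $0$ with $\psi(0)=2^d$, so one may take $R=2$ (and certainly $R\le 4$); and (i) follows from the elementary fact that $B(0,\delta)\cap B(x,\delta)$ contains a ball of radius $\delta/2$ whenever $\|x\|\le\delta$, hence has volume $\ge 2^{-d}\mathrm{vol}(B(0,\delta))$. I would also record that $\widehat\psi\in\L^1(\R^d)$ — since $\widehat{\mathbf 1_{B(0,\delta)}}$ decays like $\|\xi\|^{-(d+1)/2}$ — so that Fourier inversion holds pointwise and the Fubini swaps below are licit, regardless of whether $\mu_{S_n}$ has atoms.

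With $\psi$ in hand the estimate is a transcription of \cref{prop:replication-atom}, with ``evaluate at a point'' systematically replaced by ``integrate against $\widehat\psi$''. By (i) and Fourier inversion, $\Pr[\|S_n-u\|_2\le\delta]\le\E[\psi(S_n-u)]=\int\widehat\psi(t)\,e^{-2\pi i\langle u,t\rangle}\,\overline{\widehat{\mu_{S_n}}(t)}\,dt\le\int\widehat\psi(t)\,|\widehat{\mu_{S_n}}(t)|\,dt$, using (ii) in the last step. Independence plus the autocorrelation identity give $|\widehat{\mu_{S_n}}(t)|=\prod_i|\widehat{\mu_{X_i}}(t)|=\prod_i\widehat{\mu_{\tilde X_i}}(t)^{1/2}$, so writing $\widehat{\mu_{\tilde X_i}}^{1/2}=(\widehat{\mu_{\tilde X_i}}^{a_i/2})^{1/a_i}$ and applying H\"older with exponents $a_1,\dots,a_n$ (legitimate since $\sum a_i^{-1}=1$ and every factor is non-negative) bounds the integral by $\prod_i\bigl(\int\widehat\psi(t)\,\widehat{\mu_{\tilde X_i}}(t)^{a_i/2}\,dt\bigr)^{1/a_i}$. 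Finally $\widehat{\mu_{\tilde X_i}}(t)^{a_i/2}=\widehat{\mu_{\tilde S_{i,a_i/2}}}(t)$ (here $a_i\in 4\N$ ensures $a_i/2\in\N$), and one more Fourier inversion identifies $\int\widehat\psi\,\widehat{\mu_{\tilde S_{i,a_i/2}}}=\E[\psi(\tilde S_{i,a_i/2})]$, which by (iii) and origin-symmetry of $\tilde S_{i,a_i/2}$ is at most $2^d\,\L(\tilde S_{i,a_i/2},R\delta)$. Multiplying out, $2^{d\sum_i a_i^{-1}}=2^d$; taking the supremum over $u$ and using monotonicity of $\L(\cdot,\cdot)$ in its second argument to pass from $R\delta$ to $4\delta$ gives the claim.

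I expect the only non-routine point to be the construction of $\psi$: the constraint $\widehat\psi\ge 0$ is the genuinely rigid one, and it must coexist with the two-sided pointwise sandwich by ball indicators and with $\L^1$-integrability of $\widehat\psi$. The self-convolution gadget settles all three at once, but the bookkeeping needed to land the prefactor at exactly $2^d$ (which pins down the normalization $\psi(0)=2^d$ and the matching lower-bound constant $2^{-d}$) and to pin down the support radius requires a little care. Everything downstream is the same template as the proof of \cref{prop:replication-atom}. As noted after the statement, the identical argument with $B(0,\cdot)$ replaced by the unit ball of any norm yields the corresponding inequality for that norm.
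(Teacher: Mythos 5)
Your argument is correct and follows the paper's Fourier-analytic scheme in all essentials: dominate the $\delta$-ball indicator by a self-convolution whose Fourier transform is non-negative, apply H\"older with weights $a_i$, and use the autocorrelation identity $|\widehat{\mu_{X_i}}|^2 = \widehat{\mu_{\tilde X_i}}$. The only implementational difference is that you apply pointwise Fourier inversion to $\psi$ directly at the level of measures (justified since $\widehat\psi\in\L^1(\R^d)$), whereas the paper first mollifies each $X_i$ so that densities exist and then invokes Parseval; your route sidesteps that regularization and even lands the slightly sharper radius $2\delta$ in place of $4\delta$.
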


\begin{proof}
Let $\boldsymbol{1}_{B_{\delta}(0)}$ denote the indicator function of
the ball of radius $\delta$ centered at the origin. We will make use of
the readily verified elementary inequality 
\begin{equation}
\label{eqn:convolution-switch}
\text{vol}(B_{\delta}(0))\boldsymbol{1}_{B_{\delta}(0)}(x)\leq\boldsymbol{1}_{B_{2\delta}(0)}\ast\boldsymbol{1}_{B_{2\delta}(0)}(x)\leq\text{vol}(B_{2\delta}(0))\boldsymbol{1}_{B_{4\delta}(0)}(x).
\end{equation}
By adding to each $X_i$ an independent random vector with distribution given by a `bump function' with arbitrarily small support around the origin, we may assume that the distributions of all the random vectors under consideration are absolutely continuous with respect to the Lebesgue measure on $\R^{d}$, and thus have densities. For such a random vector $Y$, we will denote its density with respect to the $d$-dimensional Lebesgue measure by $f_Y$. 
Then, for any $v\in \R^d$, we have:
\begin{eqnarray*}
\Pr\left[\|S_{n}-v\|_{2}\leq \delta\right] & = & \int_{x\in\R^{d}}\boldsymbol{1}_{B_{\delta}(0)}(x)f_{S_{n}}(x+v)dx\\
 & \leq & \text{vol}(B_{\delta}(0))^{-1}\int_{x\in\R^{d}}\left(\boldsymbol{1}_{B(2\delta)}\ast\boldsymbol{1}_{B(2\delta)}\right)(x)f_{S_{n}}(x+v)dx\\
 & = & \text{vol}(B_{\delta}(0))^{-1}\int_{\xi\in\R^{d}}e^{2\pi i\langle\xi,v\rangle}\left(\boldsymbol{1}_{B(2\delta)}\ast\boldsymbol{1}_{B(2\delta)}\right)^{\wedge}(\xi)\widehat{f_{S_{n}}}(\xi)d\xi\\
 & = & \text{vol}(B_{\delta}(0))^{-1}\int_{\xi\in\R^{d}}e^{2\pi i\langle\xi,v\rangle}\left(\widehat{\boldsymbol{1}_{B(2\delta)}}(\xi)\right)^{2}\prod_{i=1}^{n}\widehat{f_{X_{i}}}(\xi)d\xi\\
 & = & \text{vol}(B_{\delta}(0))^{-1}\int_{\xi\in\R^{d}}e^{2\pi i\langle\xi,v\rangle}\prod_{i=1}^{n}\left(\left(\widehat{\boldsymbol{1}_{B(2\delta)}}(\xi)\right)^{\frac{2}{a_{i}}}\widehat{f_{X_{i}}}(\xi)\right)d\xi\\
 & \leq & \text{vol}(B_{\delta}(0))^{-1}\prod_{i=1}^{n}\left(\int_{\xi\in\R^{d}}\left(\widehat{\boldsymbol{1}_{B(2\delta)}}(\xi)\right)^{2}\left|\widehat{f_{X_{i}}}(\xi)\right|^{a_{i}}d\xi\right)^{\frac{1}{a_{i}}}\\
 & = & \text{vol}(B_{\delta}(0))^{-1}\prod_{i=1}^{n}\left(\int_{\xi\in\R^{d}}\left(\widehat{\boldsymbol{1}_{B(2\delta)}}(\xi)\right)^{2}\left(\widehat{f_{\tilde{X}_{i}}}(\xi)\right)^{\frac{a_{i}}{2}}d\xi\right)^{\frac{1}{a_{i}}}\\
 & = & \text{vol}(B_{\delta}(0))^{-1}\prod_{i=1}^{n}\left(\int_{\xi\in\R^{d}}\left(\boldsymbol{1}_{B(2\delta)}\ast\boldsymbol{1}_{B(2\delta)}\right)^{\wedge}(\xi)\widehat{f_{\tilde{S}_{i,a_{i}/2}}}(\xi)d\xi\right)^{\frac{1}{a_{i}}}\\
 & = & \text{vol}(B_{\delta}(0))^{-1}\prod_{i=1}^{n}\left(\int_{x\in\R^{d}}\left(\boldsymbol{1}_{B(2\delta)}\ast\boldsymbol{1}_{B(2\delta)}\right)(x)f_{\tilde{S}_{i,a_{i/2}}}(x)dx\right)^{\frac{1}{a_{i}}}\\
 & \leq & \text{vol}(B_{\delta}(0))^{-1}\text{vol}(B_{2\delta}(0))\prod_{i=1}^{n}\left(\int_{x\in\R^{d}}\boldsymbol{1}_{B(4\delta)}(x)f_{\tilde{S}_{i,a_{i/2}}}(x)dx\right)^{\frac{1}{a_{i}}}\\
 & = & 2^{d}\prod_{i=1}^{n}\left(\Pr\left[\|\tilde{S}_{i,a_{i/2}}\|_{2}\leq4\delta\right]\right)^{\frac{1}{a_{i}}},
\end{eqnarray*}
where the second line follows from \cref{eqn:convolution-switch}; the third line follows from Parseval's formula; the fourth line follows from the convolution formula and the independence of $X_1,\dots,X_n$; the sixth line follows from H\"older's inequality, along with the fact that $\widehat{\textbf{1}_{B(2\delta)}}(\xi)$ is real valued for all $\xi \in \R^{d}$; the seventh line follows from the fact that $|\widehat{f_{X_i}}(\xi)|^{2} = \widehat{f_{\tilde{X}_i}}(\xi)$ for all $\xi \in \R^{d}$; the ninth line follows again from Parseval's formula; and the tenth line follows from \cref{eqn:convolution-switch}. 
Taking the supremum over all $v\in \R^{d}$ gives the desired conclusion. 
\end{proof}
\begin{remark}
\label{rmk:replication-sbp}
As in \cref{rmk:replication-atom}, if $X_1,\dots,X_n$ are origin-symmetric, then the same conclusion holds with $\tilde{S}_{i,a_i/2}$ replaced by $S_{i,a_i}$, for any $a_1,\dots,a_n \in 2\N$ with $a_1^{-1}+\dots+a_n^{-1} = 1$.
\end{remark}

\section{Proofs}
\subsection{Proofs of Hal\'asz-type inequalities}
\label{section:proofs-of-inequalities}
By combining the tools from \cref{subsection:anti-concentration,subsection:replication}, we can now prove our Hal\'asz-type inequalities. All of them follow the same general outline. We begin by proving \cref{thm:halasz-int}. 
\begin{proof}[Proof of \cref{thm:halasz-int}]
Let $\A_{1},\dots,\A_{\ell}$ be the partition of $\{a_{1},\dots,a_{n}\}$
as in the statement of the theorem. For each $i\in[\ell]$, let $A_{i}$
denote $d\times|\A_{i}|$ dimensional matrix whose columns are given
by the elements of $\A_{i}$. With this notation, we can rewrite the
random vector $\sum_{i=1}^{n}\epsilon_{i}a_{i}$ as $\sum_{j=1}^{\ell}A_{j}Y_{j}$,
where $Y_{j}$ is uniformly distributed on $\{\pm1\}^{|\A_{j}|}$
and $Y_{1},\dots,Y_{\ell}$ are independent.

Since the random vectors $X_{1}:=A_{1}Y_{1},\dots,X_{n}:=A_{n}Y_{n}$
are origin-symmetric, and since $\ell\in2\N$, it follows from \cref{prop:replication-atom,rmk:replication-atom} that for any $u\in\R^{d}$, 
\begin{eqnarray*}
\Pr\left[\sum_{i=1}^{n}\epsilon_{i}a_{i}=u\right] & = & \Pr\left[\sum_{j=1}^{\ell}X_{j}=u\right]\\
 & \leq & \prod_{i=1}^{\ell}\Pr\left[X_{j}^{(1)}+\dots+X_{j}^{(\ell)}=0\right]^{\frac{1}{\ell}},
\end{eqnarray*}
where $X_j^{(1)},\dots, X_j^{(\ell)}$ are i.i.d. copies of $X_j$. Further, since $\text{rank}(A_{j})=r_{j}$ by assumption, it follows
from \cref{lemma:ELO-large-rank} that 
\begin{eqnarray*}
\Pr\left[X_{j}^{(1)}+\dots+X_{j}^{(\ell)}=0\right] & = & \Pr\left[A_{j}Y_{j}^{(1)}+\dots+A_{j}Y_{j}^{(\ell)}=0\right]\\
 & \leq & \left(2^{-\ell}{\ell \choose \ell/2}\right)^{r_{j}}.
\end{eqnarray*}
Substituting this bound in the previous inequality completes the proof. 
\end{proof}

By using \cref{rmk:elo-rogozin-large-rank} instead of \cref{lemma:ELO-large-rank}, we can use the same proof to obtain the following more general statement. 
\begin{theorem}
\label{thm:atom-general}
Let $a_{1},\dots,a_{n}$ be a collection vectors in $\R^{d}$ which can be partitioned as $\A_{1},\dots,\A_{\ell}$ such that $\dim_{\R^d}(\Span\{a:a\in\A_{i}\})=:r_{i}$. Let $x_1,\dots,x_n$ be independent random variables, and for each $i\in [n]$, let $\tilde{x}_i:= x_i - x_i'$, where $x_i'$ is an independent copy of $x_i$. Then, 
\[
\sup_{u\in \R^{d}}\Pr\left[\sum_{i=1}^{n}x_{i}a_{i}=u\right]\leq 2^{d}\inf_{(b_1,\dots,b_\ell)\in \mathcal{B}}\left(\frac{C}{\ell\lambda}\right)^{\sum_{i=1}^{\ell}\frac{r_i}{2b_i}}, 
\]
where $\lambda := \min_{i\in [n]}(1-\L_{\tilde{x}_i}(0)) $ and $\mathcal{B} = \{(b_1,\dots,b_\ell)\in (4\N)^\ell : b_1^{-1}+\dots + b_\ell^{-1} = 1\}$.
\end{theorem}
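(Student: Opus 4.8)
The plan is to follow verbatim the structure of the proof of \cref{thm:halasz-int}, substituting the more general anti-concentration input at the two key steps. First I would set up the same notation: given the partition $\A_1,\dots,\A_\ell$, write $\sum_{i=1}^n x_i a_i = \sum_{j=1}^\ell A_j Y_j$, where $A_j$ is the $d\times|\A_j|$ matrix whose columns are the elements of $\A_j$, and $Y_j$ is the random vector whose coordinates are the independent $x_i$'s with $a_i \in \A_j$. Crucially, $\ell$ is no longer assumed even here, so I cannot apply the origin-symmetric version of the replication trick (\cref{rmk:replication-atom}); instead I would invoke \cref{prop:replication-atom} directly for the independent (not necessarily symmetric) random vectors $X_j := A_j Y_j$. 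This is exactly why the symmetrizations $\tilde{x}_i = x_i - x_i'$ and the quantity $\lambda$ enter the statement.

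Next I would carry out the optimization over exponents that \cref{prop:replication-atom} allows: for any $(b_1,\dots,b_\ell)\in\mathcal{B}$ (so $b_j \in 4\N$ and $\sum_j b_j^{-1}=1$), the proposition gives
\[
\Pr\left[\sum_{i=1}^n x_i a_i = u\right] \leq \prod_{j=1}^\ell \Pr\left[\tilde{S}_{j,b_j/2} = 0\right]^{1/b_j},
\]
where $\tilde{S}_{j,b_j/2}$ is the sum of $b_j/2$ independent copies of $\tilde{X}_j := X_j - X_j' = A_j \tilde{Y}_j$, and $\tilde{Y}_j$ has independent coordinates $\tilde{x}_i$. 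Since $\rank(A_j) = r_j$, \cref{rmk:elo-rogozin-large-rank} applied to the matrix $A_j$, the random vector $\tilde{Y}_j$, and $\ell$ replaced by $b_j/2$ yields
\[
\Pr\left[\tilde{S}_{j,b_j/2}=0\right] \leq \left(\frac{C}{b_j/2}\right)^{r_j/2}\max_{|I|=r_j}\prod_{i\in I}\frac{1}{\sqrt{1-\L(\tilde{x}_i,0)}} \leq \left(\frac{2C}{b_j}\right)^{r_j/2}\lambda^{-r_j/2},
\]
using $1-\L(\tilde{x}_i,0)\geq\lambda$ for every $i$. Substituting and collecting exponents gives a bound of the form $\prod_{j=1}^\ell \left(C'/(b_j\lambda)\right)^{r_j/(2b_j)}$, and then taking the infimum over $\mathcal{B}$ produces the claimed expression (up to absorbing constants and the $2^d$ factor — I should double-check whether the $2^d$ is genuinely needed here, since \cref{prop:replication-atom} for atoms does not carry a $2^d$; presumably it is a harmless overestimate kept for uniformity with the small-ball version, or it compensates for relaxing $C/b_j$ to $C/(\ell\lambda)$ via $b_j \geq \ell$ in the worst case — I would verify that $b_j^{-1} \le$ something comparable to $\ell^{-1}$ is not actually needed and state the bound with $\ell\lambda$ in the denominator as written).

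The main obstacle I anticipate is reconciling the precise form of the exponent base: the statement writes $\left(C/(\ell\lambda)\right)^{\sum r_i/(2b_i)}$ with a single common base $C/(\ell\lambda)$ rather than the heterogeneous bases $C/(b_j\lambda)$ that fall out of the argument. Since each $b_j \geq 4 > \ell$ can fail (indeed $b_j$ can be much larger than $\ell$, but also as small as the constraint permits), one cannot uniformly replace $b_j$ by $\ell$ in the favourable direction; the resolution is that $C/(\ell\lambda)$ is being used as a crude upper bound valid whenever $\ell \le b_j$ does hold, or else the factor $2^d$ and a redefinition of $C$ absorb the discrepancy — I would check that for the intended applications (where all $b_j$ are taken equal to $\ell$, forcing $\ell \in 4\N$) the two forms coincide, and otherwise simply note that $\left(C/(b_j\lambda)\right)^{r_j/(2b_j)} \le \left(C/(\ell\lambda)\right)^{r_j/(2b_j)}$ may require $b_j \ge \ell$, which I would either impose implicitly through the choice of $\mathcal{B}$ or handle by keeping the sharper heterogeneous bound and remarking that it implies the stated one in the regime of interest. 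Beyond this bookkeeping point, every step is a direct citation of an already-established lemma, so no substantive new difficulty should arise.
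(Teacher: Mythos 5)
Your proposal is exactly the paper's intended argument: the authors dispose of \cref{thm:atom-general} in a single line, saying it follows from the proof of \cref{thm:halasz-int} ``by using \cref{rmk:elo-rogozin-large-rank} instead of \cref{lemma:ELO-large-rank},'' which is precisely the substitution you carry out, together with invoking \cref{prop:replication-atom} for general (non-even) $\ell$ and non-symmetric summands. Your derivation cleanly yields $\prod_{j=1}^{\ell}\bigl(C/(b_j\lambda)\bigr)^{r_j/(2b_j)}$ for any $(b_1,\dots,b_\ell)\in\mathcal{B}$, and your two worries about the stated form are both well founded: since each $b_j$ can be either smaller or larger than $\ell$ under the constraint $\sum_j b_j^{-1}=1$, one cannot in general replace $b_j$ by $\ell$ in the base, so the natural conclusion has the heterogeneous bases $C/(b_j\lambda)$ rather than a common $C/(\ell\lambda)$; and because \cref{prop:replication-atom} (unlike \cref{prop:replication-sbp}) carries no $2^d$ prefactor, that factor is indeed extraneous for the atom case. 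Both points are imprecisions in the paper's statement rather than gaps in your argument, and when all $b_j=\ell$ (the case actually used downstream) the two forms agree, so the intent is clear and your proof is the right one.
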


We now state and prove the general small-ball version of our anti-concentration inequality. 
\begin{theorem}
\label{thm:sbp-general}
Let $a_{1},\dots,a_{n}$ be a collection vectors in $\R^{d}$. Let $\A_1,\dots, \A_\ell$ be a partition of the set $\{a_1,\dots, a_n\}$, and for each $i \in [\ell]$, let $A_i$ denote the $d\times |\A_i|$ dimensional matrix whose columns are given by the elements of $\A_i$. Let $x_1,\dots,x_n$ be independent random variables, and for each $i\in [n]$, let $\tilde{x}_i:= x_i - x_i'$, where $x_i'$ is an independent copy of $x_i$. Let $\delta,\lambda \geq 0$ be such that $\min_{i\in[n]}(1-\L(\tilde{x}_i,\delta)) = \lambda$. Then, for every $M \geq 1$ and $\varepsilon \in (0,1)$,
$$\L\left(\sum_{i=1}^{n}x_ia_i, M\delta\right) \leq \inf_{(b_1,\dots,b_\ell)\in\mathcal{B}}\prod_{i=1}^{\ell}\left(\frac{CM}{\sqrt{\varepsilon b_i\lambda}\|A_{i}\|_{\text{HS}}}\right)^{\frac{\lceil(1-\varepsilon)r_{s}(A_i)\rceil}{b_i}},$$
where $r_s(A_i)$ denotes the stable rank of $A_i$, $C$ is an absolute constant, and $\mathcal{B} = \{(b_1,\dots,b_\ell)\in (4\N)^\ell : b_1^{-1}+\dots + b_\ell^{-1} = 1\}$.
\end{theorem}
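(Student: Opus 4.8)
The plan is to transcribe the three-step argument behind \cref{thm:halasz-int} and \cref{thm:atom-general} into the small-ball regime, with the Rudelson--Vershynin inequality \cref{thm:RV} playing the role of \cref{lemma:ELO-large-rank} (respectively \cref{rmk:elo-rogozin-large-rank}). First I would group the variables according to the partition: let $A_j$ be the $d\times|\A_j|$ matrix whose columns are the elements of $\A_j$, and let $Y_j$ be the vector collecting those $x_i$ with $a_i\in\A_j$, so that $\sum_{i=1}^n x_i a_i=\sum_{j=1}^\ell A_jY_j$ is a sum of $\ell$ independent random vectors. Fixing $(b_1,\dots,b_\ell)\in\mathcal B$ and applying \cref{prop:replication-sbp} with its radius parameter instantiated as $M\delta$ and weights $b_1,\dots,b_\ell$ gives
\[
\L\Bigl(\sum_{i=1}^n x_i a_i,\;M\delta\Bigr)\;\le\;2^d\prod_{j=1}^\ell \L\bigl(\widetilde S_{j,\,b_j/2},\;4M\delta\bigr)^{1/b_j},
\]
where $\widetilde S_{j,\,b_j/2}$ is a sum of $b_j/2$ i.i.d.\ copies of $A_jY_j-A_jY_j'$ with $Y_j'$ an independent copy of $Y_j$. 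Since $A_jY_j-A_jY_j'=A_j(Y_j-Y_j')$ and the coordinates of $Y_j-Y_j'$ are exactly the symmetrized variables $\tilde x_i$ (which are independent, as the $x_i$ are), we may write $\widetilde S_{j,\,b_j/2}=A_jZ_j$, where $Z_j\in\R^{|\A_j|}$ has independent coordinates, each being a sum of $b_j/2$ i.i.d.\ copies of the relevant $\tilde x_i$.

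The heart of the proof is estimating $\L(A_jZ_j,4M\delta)$ for each $j$. By Rogozin's inequality \cref{Rogozin's-thm}, every coordinate of $Z_j$ has L\'evy concentration at scale $\delta$ bounded by $C/\sqrt{(b_j/2)\,(1-\L(\tilde x_i,\delta))}\le C\sqrt2/\sqrt{b_j\lambda}=:\rho_j$, using $1-\L(\tilde x_i,\delta)\ge\lambda$; this is precisely the step that creates the $b_j$ and the $\lambda$ (rather than $1-\lambda$) in the final denominator. I would then apply \cref{thm:RV} to the matrix $A_j$ and the random vector $Z_j$, with per-coordinate bound $\rho_j$ at scale $\delta$, the given $\varepsilon$, and dilation parameter $4M/\|A_j\|_{\text{HS}}$; in the principal case where this parameter is $\ge 1$ it yields
\[
\L\bigl(A_jZ_j,\,4M\delta\bigr)\;\le\;\Bigl(\tfrac{C}{\sqrt{\varepsilon}}\cdot\tfrac{4M}{\|A_j\|_{\text{HS}}}\cdot\rho_j\Bigr)^{\lceil(1-\varepsilon)r_s(A_j)\rceil}\;=\;\Bigl(\tfrac{C'M}{\sqrt{\varepsilon b_j\lambda}\,\|A_j\|_{\text{HS}}}\Bigr)^{\lceil(1-\varepsilon)r_s(A_j)\rceil}.
\]
Substituting into the displayed product and taking the infimum over $(b_1,\dots,b_\ell)\in\mathcal B$ gives the asserted inequality (the overall factor $2^d$ enters exactly as in \cref{thm:halasz-sbp}).

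I expect the only real friction to be bookkeeping rather than anything conceptual. One must make sure Rogozin's inequality is invoked at precisely the scale $\delta$ at which \cref{thm:RV} reads off its hypothesis, and one must dispatch the degenerate case in which $\|A_j\|_{\text{HS}}>4M$ for some block, where the natural dilation parameter above falls below $1$: there I would instead apply \cref{thm:RV} with dilation parameter $1$ and then use monotonicity of the L\'evy concentration function in its radius, at the cost of an absolute constant. With those points handled, the argument is a direct translation of the proof of \cref{thm:atom-general}, with \cref{thm:RV} substituted for \cref{rmk:elo-rogozin-large-rank}.
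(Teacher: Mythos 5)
Your proposal is essentially the paper's own proof, reproduced step for step: write $\sum_i x_ia_i = \sum_j A_jY_j$, apply \cref{prop:replication-sbp} at radius $M\delta$ to reduce to $\L\bigl(A_j(\tilde Y_j^{(1)}+\dots+\tilde Y_j^{(b_j/2)}),4M\delta\bigr)$, control each coordinate of the replicated block by Rogozin (\cref{Rogozin's-thm}) to get a per-coordinate bound $\rho_j \lesssim 1/\sqrt{b_j\lambda}$ at scale $\delta$, and then feed this into Rudelson--Vershynin (\cref{thm:RV}). The decomposition, the invocation of each tool, and the bookkeeping of the $2^d$ factor all match.

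The one place you go beyond the paper is your remark about the block(s) with $\|A_j\|_{\text{HS}}>4M$, where the natural Rudelson--Vershynin dilation parameter $4M/\|A_j\|_{\text{HS}}$ drops below $1$. You are right to flag this -- the paper silently passes over it -- but the patch you propose does not close it: applying \cref{thm:RV} with dilation parameter $1$ and then shrinking the radius by monotonicity of $\L$ gives only $\L(A_jZ_j,4M\delta)\le\bigl(C_\varepsilon\rho_j\bigr)^{\lceil(1-\varepsilon)r_s(A_j)\rceil}$, whereas the claimed bound is smaller by a factor of $\bigl(4M/\|A_j\|_{\text{HS}}\bigr)^{\lceil(1-\varepsilon)r_s(A_j)\rceil}$, which is not an absolute constant when $\|A_j\|_{\text{HS}}/M$ is large. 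So the ``cost of an absolute constant'' assertion is not correct in that regime. Since the paper's proof has the same lacuna, your reconstruction is faithful; just be aware that your proposed fix for the degenerate case does not actually recover the stated inequality there.
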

\begin{proof}
As before, we begin by rewriting the random vector $\sum_{i=1}^{n}x_{i}a_{i}$
as $\sum_{i=1}^{\ell}A_{i}Y_{i}$. From \cref{prop:replication-sbp}, it follows that for any $(b_{1},\dots,b_{\ell})\in\mathcal{B}$,
\begin{eqnarray*}
\L\left(\sum_{i=1}^{n}x_{i}a_{i},M\delta\right) & = & \L\left(\sum_{i=1}^{\ell}A_{i}Y_{i},M\delta\right)\\
 & \leq & 2^{d}\prod_{i=1}^{\ell}\L\left(A_{i}\left(\tilde{Y}_{i}^{(1)}+\dots+\tilde{Y}_{i}^{(b_{i}/2)}\right),4M\delta\right)^{\frac{1}{b_{i}}}.
\end{eqnarray*}
Next, since $1-\L(\tilde{x}_{i},\delta)\geq\lambda$ for all $i\in[n]$, it follows from \cref{Rogozin's-thm} that $$\L\left(\tilde{x}_i^{1}+\cdots+\tilde{x}_i^{(b_i/2)},\delta\right) \leq \frac{C}{\sqrt{b_i\lambda}},$$
where $C$ is an absolute constant. In particular, all of the (independent) coordinates of the random vector $\tilde{Y}_{i}^{(1)}+\dots+\tilde{Y}_{i}^{(b_{i}/2)}$ have $\delta$-L\'evy concentration function bounded by $C/\sqrt{b_i\lambda}$. Hence, it follows from \cref{thm:RV} that
\[
\L\left(A_{i}\left(\tilde{Y}_{i}^{(1)}+\dots+\tilde{Y}_{i}^{(b_{i}/2)}\right),4M\delta\right)\leq\left(\frac{CM}{\sqrt{\varepsilon b_i\lambda}\|A_{i}\|_{\text{HS}}}\right)^{\lceil(1-\varepsilon)r_{s}(A_i)\rceil},
\]
where $C$ is an absolute constant. Substituting this in the first inequality completes the proof.  
\end{proof}
\begin{remark}
When the $x_i$'s are origin symmetric random variables, we may use \cref{rmk:replication-sbp} instead of \cref{prop:replication-sbp} to obtain a similar conclusion -- with the infimum now over the larger set $\mathcal{B}'=\{(b_1,\dots,b_\ell)\in (2\N)^{\ell}\colon b_1^{-1}+\dots+b_\ell^{-1}=1\}$ -- under the assumption that $\min_{i\in[n]}(1-\L(x_i,\delta))=\lambda$. In particular, if $\ell$ is even, then taking $b_1=\dots=b_\ell = \ell$ gives \cref{thm:halasz-sbp}.  
\end{remark}

\subsection{Proof of \cref{thm:hadamard}}
As in \cref{subsec-approach}, let $H_{k,n}$ denote a $k\times n$ matrix with all its entries in $\{\pm 1\}$ and all of whose rows are orthogonal. For convenience of notation, we isolate the following notion. 
\begin{definition}\label{def: rank partition}
For any $r,\ell \in \N$, a matrix $M$ is said to admit an $(r,\ell)$-\emph{rank partition} if there exists a decomposition of the columns of $M$ into $\ell$ disjoint subsets, each of which corresponds to a submatrix of rank at least $r$.
\end{definition}
Note that the existence of an $(r,\ell)$-rank partition is a uniform version of the condition appearing in \cref{thm:halasz-int}. The next proposition shows that any $H_{k,n}$ with $k$ admits an $(r,\ell)$-rank partition with $r$ and $\ell$ sufficiently large. 
\begin{proposition}
\label{prop:rank-partition-hadamard}
Let $r,\ell \in \N$ such that $2\leq \ell, r \leq k$ and $(e^{2}\ell)^{k}<(n/r)^{k-r}$. Then, $H_{k,n}$ admits an $(r,\ell)$-rank partition.
\end{proposition}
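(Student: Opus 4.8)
The plan is to use the orthogonality of the rows only through one elementary consequence — that no low-dimensional subspace of $\R^k$ can contain too many columns of $H_{k,n}$ — and then to build the rank partition greedily. Write $h_1,\dots,h_n\in\{\pm1\}^k$ for the columns of $H_{k,n}$. The identity $H_{k,n}H_{k,n}^{T}=nI_k$ is the same as $\sum_{j=1}^{n}h_jh_j^{T}=nI_k$, so for the orthogonal projection $P$ onto any $d$-dimensional subspace $U\subseteq\R^k$ we get
\[
\sum_{j=1}^{n}\langle h_j,Ph_j\rangle=\operatorname{tr}\!\Big(P\sum_{j=1}^{n}h_jh_j^{T}\Big)=n\operatorname{tr}(P)=nd .
\]
Since $\langle h_j,Ph_j\rangle=\|h_j\|_2^{2}=k$ whenever $h_j\in U$, the number of indices $j\in[n]$ with $h_j\in U$ is at most $nd/k$. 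This is the only place orthogonality enters, and it is, morally, the resolution of obstacle (ii) from the introduction.

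Second, I would build the partition greedily on the index set $[n]$. Put $I^{(0)}:=[n]$; for $i=1,\dots,\ell-1$ choose $J_i\subseteq I^{(i-1)}$ with $|J_i|=r$ such that the columns $\{h_j:j\in J_i\}$ are linearly independent — which is possible exactly when the columns indexed by $I^{(i-1)}$ have rank $\ge r$ — and set $I^{(i)}:=I^{(i-1)}\setminus J_i$; finally put $J_\ell:=I^{(\ell-1)}$. Then $J_1,\dots,J_\ell$ partition $[n]$, and each corresponding submatrix has rank $\ge r$ provided that at every stage $i\in[\ell]$ the columns indexed by $I^{(i-1)}$ have rank $\ge r$. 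Since $|I^{(i-1)}|=n-(i-1)r\ge n-(\ell-1)r$, a failure at some stage would place at least $n-(\ell-1)r$ columns inside an $(r-1)$-dimensional subspace, which by the first paragraph contains at most $n(r-1)/k$ of them. Hence it suffices to verify
\[
\frac{n(r-1)}{k}<n-(\ell-1)r .
\]

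Third, I would deduce this from the hypothesis. Taking $k$-th roots in $(e^{2}\ell)^{k}<(n/r)^{k-r}$ gives $\ell<e^{-2}(n/r)^{(k-r)/k}\le(n/r)^{(k-r)/k}$. Because $2\le r\le k<n$ we have $x:=n/r\ge 1$, and the map $t\mapsto x^{t}$ is convex on $[0,1]$, hence lies below the chord joining $(0,1)$ and $(1,x)$; with $t=(k-r)/k$ this gives $(n/r)^{(k-r)/k}\le 1+\tfrac{k-r}{k}\cdot\tfrac{n}{r}\le 1+\tfrac{(k-r+1)n}{kr}$. Therefore $\ell<1+\tfrac{(k-r+1)n}{kr}$; multiplying by $r$ and rearranging (using $k-r+1=k-(r-1)$) yields exactly $\tfrac{n(r-1)}{k}<n-(\ell-1)r$, which is the inequality required above.

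I expect the projection/trace estimate to be the only substantive step, and also the one needing care: it must bound the columns of $H_{k,n}$ lying in $U$ \emph{with multiplicity}, since a priori the columns may repeat; Odlyzko's lemma would only control the number of distinct $\{\pm1\}^k$ vectors in $U$, whereas the trace computation handles multiplicities automatically. The greedy extraction and the final convexity manipulation are routine, and the argument in fact establishes the conclusion under the weaker hypothesis $n(r-1)/k<n-(\ell-1)r$ (alternatively, one may apply the matroid base-packing theorem to the rank-$r$ truncation of the column matroid, which gives the partition whenever $\ell r\le n$); either of these refinements may be worth recording.
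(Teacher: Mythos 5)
Your proof is correct, and it takes a genuinely different route from the paper's.

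The paper's argument exploits orthogonality through the Cauchy--Binet identity $\det(H_{k,n}H_{k,n}^T) = \sum_{A}\det(A)^2$, which (together with the trivial Hadamard bound $\det(A)^2 \le k^k$) shows that $H_{k,n}$ has at least $(n/k)^k$ non-singular $k\times k$ minors; the greedy step then finds, at each stage, a non-zero minor sharing few columns with the already-chosen set, via a union bound over minors meeting a small column set. Your argument instead exploits orthogonality through the tight-frame identity $\sum_j h_j h_j^T = nI_k$, which via $\operatorname{tr}(P\sum_j h_j h_j^T) = nd$ gives the clean bound that no $d$-dimensional subspace of $\R^k$ can contain more than $nd/k$ of the columns, counted with multiplicity. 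The greedy step then becomes simpler: one never needs to look at $k\times k$ minors at all, only at the rank of what remains. Both approaches isolate a single ``pseudorandomness'' consequence of $H_{k,n}H_{k,n}^T = nI_k$, but yours is more elementary (no determinants, no binomial estimates) and, as you note, establishes the conclusion under the quantitatively weaker and cleaner hypothesis $n(r-1)/k < n-(\ell-1)r$; with the paper's choices $r = \lfloor k/2\rfloor$ this allows $\ell$ up to order $n/k$, rather than the $\sqrt{n/k}$ that the stated hypothesis permits, so one could afford larger $\ell$ in the downstream application of \cref{thm:halasz-int}. The one thing the Cauchy--Binet route buys, as the paper's remark points out, is that it degrades gracefully to matrices where one only knows $\det(AA^T)$ is large and the $k\times k$ minors are bounded; your trace argument has its own natural generalization (replace $nI_k$ by control on $\|AA^T\|_{\mathrm{op}}$ and on column norms), so neither is strictly more general, but yours is the shorter path to the stated proposition. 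Your observation that the trace estimate automatically handles repeated columns---whereas an Odlyzko-style count of lattice points in a subspace would not---is also a fair one.
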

\begin{proof}
The proof proceeds in two steps -- first, we show that $H_{k,n}$ contains many non-zero $k\times k$ minors, and second, we apply a simple greedy procedure to these non-zero minors to produce an $(r,\ell)$-rank partition for the desired values of $r$ and $\ell$. 

The first step follows easily from the classical Cauchy-Binet formula (see, e.g., \cite{aigner2010proofs}), which asserts that:
$$\det(H_{k,n}H_{k,n}^T) = \sum_{A \in \mathcal{M}_k}\det(A)^{2},$$
where $\mathcal{M}_k$ denotes the set of all $k\times k$ submatrices of $H_{k,n}$. In our case, $H_{k,n}H_{k,n}^{T} = n\text{Id}_{k}$, so that $\det(H_{k,n}H_{k,n}^{T}) = n^{k}$. Moreover, since each $A \in \mathcal{M}_k$ is a $k\times k$ $\{\pm 1\}$-valued matrix, $\det(A)^{2} \leq k^{k}$ (with equality attained if and only if $A$ is itself a Hadamard matrix). Hence, it follows from the Cauchy-Binet formula that $H_{n,k}$ has at least $(n/k)^{k}$ non-zero minors. 

Next, we use these non-zero minors to construct an $(r,\ell)$-rank partition in $\ell$ steps as follows: In Step $1$, choose $r$ columns of an arbitrary non-zero minor -- such a minor is guaranteed to exist by the discussion above. Let $\mathcal{C}_k$ denote the union of the columns chosen by the end of Step $k$, for any $1\leq k \leq \ell -1$. In Step $k+1$, we choose $r$ linearly independent columns which are disjoint from $\mathcal{C}_k$. Then, the $\ell$ collections of $r$ columns chosen at different steps gives an $(r,\ell)$-rank partition of $H_{k,n}$. 

Therefore, to complete the proof, it only remains to show that for each $1\leq k\leq \ell-1$, there is a choice of $r$ linearly independent columns which are disjoint from $\mathcal{C}_k$. Since $|\mathcal{C}_k|=rk$, this is in turn implied by the stronger statement that there is a choice of $r$ linearly independent columns which are disjoint from any collection $\mathcal{C}$ of at most $r\ell$ columns. In order to see this, we note that the number of $k\times k$ submatrices of $H_{k,n}$ which have at least $k-r$ columns contained in $\mathcal{C}$ is at most:
\begin{eqnarray*}
\sum_{s=0}^{r}{r\ell \choose k-s}{n \choose s} & \leq & {r\ell \choose k}\sum_{s=0}^{r}{n \choose s} 
\\
 & \leq & \left(\frac{er\ell}{k}\right)^{k}\left(\frac{en}{r}\right)^{r}\\ 
 & < & \left(\frac{n}{k}\right)^{k},
\end{eqnarray*}
where the first inequality uses $2\leq \ell$ and the final inequality follows by assumption. Since there are at least $(n/k)^{k}$ non-zero minors of $H_{k,n}$, it follows that there exists a $k\times k$ submatrix $A_{k+1}$ of $H_{n,k}$ of full rank which shares at most $k-r$ columns with $\mathcal{C}_k$. In particular, $A_{k+1}$ contains $r$ linearly independent columns which are disjoint from $\mathcal{C}_k$, as desired. 
\end{proof}

The previous proposition essentially completes the proof of \cref{thm:hadamard}. Indeed, recall from \cref{subsec-approach} that it suffices to show the following: there exist absolute constants $0<c_1<c_2 <1$ and $C>0$ such that for all $k\in [c_1n,c_2n]$, the number of solutions $x\in \{\pm 1\}^{n}$ to $H_{k,n}x = 0$ is at most $2^{-(1+C)k}$. The previous proposition shows that $H_{k,n}$ admits an $(r,\ell)$-rank partition with $r=\lfloor k/2\rfloor$  and $\ell = \lfloor\sqrt{n/ke^{4}}\rfloor$. Hence, from \cref{thm:halasz-int}, it follows that for $k\in [1,n/15000]$, the number of solutions $x\in \{\pm 1\}^{n}$ to $H_{k,n}x = 0$ is at most $2^{n-(1+1/10)k}$,
which completes the proof. 
\begin{remark}
For our problem of providing an upper bound on the number of Hadamard matrices, we could have used the somewhat simpler \cref{lemma:large rank for many} (instead of \cref{prop:rank-partition-hadamard}), which shows that there are very few $H_{k,n}$ which do not admit an $(r,\ell)$-rank partition for sufficiently large $r,\ell$. However, we used \cref{prop:rank-partition-hadamard} to show that it is easy to find such a rank partition even for a given $k\times n$ system of linear equations $A$ -- indeed, the proof of \cref{prop:rank-partition-hadamard} goes through as long as $\det(AA^T)$ is `large' (which is indeed the case for random or `pseudorandom' $A$), and all $k\times k$ minors of $A$ are uniformly bounded (which is guaranteed in settings where $A$ has restricted entries, as in our case).  
\end{remark}

\subsection{Proof of \cref{thm:normal}}
\label{subsec-proof-normality}
In this section, we show how to obtain a non-trivial upper bound on the number of $\{\pm 1\}$-valued normal matrices using our general framework. As mentioned in the introduction, this bound by itself is not stronger than the one obtained by Deneanu and Vu \cite{deneanu2017random}; however, it can be used in their proof in a modular fashion to obtain an improvement over their bound, thereby proving \cref{thm:normal}. As the proof of Deneanu and Vu is quite  technical, we defer the details of this second step to \cref{appendix-completing-normal-proof}.

Following Deneanu and Vu, we consider the following generalization of the notion of normality: 
\begin{definition}
Let $N$ be a fixed (but otherwise arbitrary) $n\times n$ matrix. An $n\times n$ matrix $M$ is said to be $N$-\emph{normal} if and only if $$MM^T-M^TM=N.$$ 
\end{definition} 
For any $n\times n$ matrix $N$, we let $\mathcal N(N)$ denote the set of all $n\times n$, $\{\pm 1\}$-valued matrices which are $N$-normal. In particular, $\mathcal{N}(0)$ is the set of all $n\times n$, $\{\pm 1\}$-valued normal matrices. The notion of $N$-normality is crucial to the proof of Deneanu and Vu, which is based on an inductive argument -- they show that the quantity $2^{(c_{DV}+o(1))n^{2}}$ in \cref{thm:DV} is actually a \emph{uniform} upper bound on the size of the set $\mathcal{N}(N)$ for any 
$N$. While this general notion of normality is not required to obtain \emph{some} non-trivial upper bound on the number of normal matrices, either using our framework or theirs, we will state and prove the results of this section for $N$-normality, since this greater generality will be essential in \cref{appendix-completing-normal-proof}.\\ 

We begin by introducing some notation, and discussing how to profitably recast the problem of counting $N$-normal matrices as a problem of counting the number of solutions to an underdetermined system of linear equations. 
Given any matrix $X$, we let $r_i(X)$ and $c_i(X)$ denote its $i^{th}$ row and column respectively. With this notation, note that for a given matrix $M$, being $N$-normal is equivalent to satisfying the following equation for all $i,j \in [n]$:
\begin{equation}\label{equivalent to normal} r_i(M)r_j^T(M)-c_i(M)^Tc_j(M)=N_{ij}.
\end{equation}
In particular, writing $M$ in block form as:
$$M=\left[\begin{array}{cc} A_k & B_k\\
                            C_k & D_k\end{array}\right],$$
where $A_k$ is a $k\times k$ matrix, we see that \eqref{equivalent to normal} amounts to the following equations: 
\begin{enumerate}[$(i)$]
  \item For all $i,j\in[k]$:  $$r_i(A_k)r_j(A_k)^T+r_i(B_k)r_j(B_k)^T-c_i(A_k)^Tc_j(A_k)-c_i(C_k)^Tc_j(C_k)=N_{ij}.$$
\item For all $i\in[k],j\in[n-k]$: 
$$r_i(A_k)r_j(C_k)^T+r_i(B_k)r_j(D_k)^T-c_i(A_k)^Tc_j(B_k)-c_i(C_k)^Tc_j(D_k)=N_{i,k+j}.$$
\item  For all $i,j\in[n-k]$: $$r_i(C_k)r_j(C_k)^T+r_i(D_k)r_j(D_k)^T-c_i(B_k)^Tc_j(B_k)-c_i(D_k)^Tc_j(D_k)=N_{k+i,k+j}.$$
\end{enumerate}

We now rewrite this system of equations in a form that will be useful for our application. Following Deneanu and Vu, we will count the size of $\mathcal{N}(N)$ by constructing $N$-normal matrices in $n+1$ steps, and bounding the number of choices available at each step. The steps are as follows: in Step $0$, we select $n$ entries $d_1,\dots,d_n$ to serve as diagonal entries of the matrix $M$; in Step $k$ for $1\leq k\leq n$, we select $2(n-k)$ entries so as to completely determine the $k^{th}$ row and the $k^{th}$ column of $M$ -- of course, these $2(n-k)$ entries cannot be chosen arbitrarily, and must satisfy some constraints coming from the choice of entries in Steps $0,\dots,k-1$. 

More precisely, let $M_k$ denote the structure obtained at the end of Step $k$. Then,
\begin{equation}
\label{eqn:M_k}
M_k=\left[\begin{array}{c|c}A_k & B_k\\ \hline 
C_k & \begin{array}{ccc}d_{k+1} & * &*\\
 *&\ddots&* \\ *&*&d_n \\ \end{array} \end{array}\right],
\end{equation}
where the $*$'s denote the parts of $D_k$ which have not been determined by the end of Step $k$. Observe that the matrix $A_k$, together with the first column of $B_k$, the first row of $C_k$, and the diagonal element $d_{k+1}$ forms the matrix $A_{k+1}$; in particular, the matrix $A_{k+1}$ is already determined at the end of Step $k$. Moreover, both $B_{k+1}$ and $C_{k+1}$ are determined at the end of Step $k$ up to their last row and last column respectively. 

In Step $k+1$, we choose $r_{k+1}(B_{k+1})$ and $c_{k+1}(C_{k+1})$. In order to make this choice in a manner such that the resulting $M_{k+1}$ admits even a single extension to an $N$-normal matrix, it is necessary that for all $i\in[k]$: 
$$r_{k+1}(A_{k+1})r_i(A_{k+1})^T+r_{k+1}(B_{k+1})r_{i}^T(B_{k+1})-c_{k+1}(A_{k+1})^Tc_i(A_{k+1})-c_{k+1}(C_{k+1})^Tc_i(C_{k+1})=N_{k+1,i}.$$

Since $A_{k+1}$ is completely determined by the end of Step $k$, and since $N$ is fixed, we can rewrite the above equation as: for all $i\in [k]$,
\begin{equation}\label{N depends on previous}
r_{k+1}(B_{k+1})r_{i}^T(B_{k+1})-c_{k+1}(C_{k+1})^Tc_i(C_{k+1})=N'_{k+1,i},
\end{equation}
for some $N'_{k+1,i}$ which is uniquely determined at the end of Step $k$. Let $N'_{k}$ be the $k$-dimensional column vector whose $i^{th}$ entry is given by $N'_{k+1,i}$, let $T_{k}:=[U\text{ } V]$ be the $k\times 2(n-k-1)$ matrix formed by taking $U$ to be the matrix consisting of the first $k$ rows of $B_{k+1}$ and $V^{T}$ to be the matrix consisting of the first $k$ columns of $C_{k+1}$, and let $x_k$ be the $2(n-k-1)$-dimensional column vector given by $x_k:=\left[\begin{array}{c} r_{k+1}^T(B_{k+1})\\
-c_{k+1}(C_{k+1}) \end{array}\right]$. With this notation, \cref{N depends on previous} can be written as: 
\begin{equation} \label{key linear equation} T_kx_k=N'_k.
\end{equation} 

The next proposition is the analogue of \cref{prop:rank-partition-hadamard} in the present setting. 
\begin{proposition}\label{lemma:large rank for many}
Let $0 < \gamma < 1$ be fixed, and let $M$ be a random $m\times n'$ $\{\pm 1\}$-valued random matrix. Let $\mathcal{E}_{\gamma,\ell}$ denote the event that $M$ does not admit a $(\gamma m,\ell)$-rank partition. Then,
  $$\Pr[\mathcal{E}_{\gamma,\ell}]\leq 2^{-(1-\gamma)^{2}mn'+(1-\gamma)^{2}(m^{2}\ell+m^{2})+O(n')}.$$
\end{proposition}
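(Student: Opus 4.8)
The plan is to reduce the statement, via a greedy argument, to the elementary fact that an $m\times t$ random $\{\pm1\}$ matrix with $t$ comfortably larger than $m$ has rank close to $m$ with overwhelming probability --- a fact which follows at once from Odlyzko's lemma (\cref{odlyzko}). Write $r:=\lceil\gamma m\rceil$ and $t_0:=n'-(\ell-1)r$. First I would dispose of the trivial case: if the right-hand side of the claimed bound is $\ge 1$ there is nothing to prove, so I may assume it is $<1$, and a short computation (using that the constant implied by $O(n')$ may be taken $\ge 1$) then shows this forces $t_0>m$ --- the only consequence of non-triviality I will need.

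Next I would set up the greedy reduction. Suppose $M$ fails to admit a $(\gamma m,\ell)$-rank partition (\cref{def: rank partition}), and run the procedure that, at step $i=1,\dots,\ell$, having already committed a set $S_{i-1}$ of $(i-1)r$ columns, picks $r$ linearly independent columns of $M$ among those not in $S_{i-1}$; this is possible whenever the submatrix of $M$ on the columns outside $S_{i-1}$ has rank at least $r$. Completion of all $\ell$ steps would produce $\ell$ disjoint sets of columns, each spanning a space of dimension $r\ge\gamma m$, i.e.\ a $(\gamma m,\ell)$-rank partition; so the procedure must fail at some step $i\le\ell$, where $|S_{i-1}|\le(\ell-1)r$ and hence the submatrix on the remaining (at least $n'-(\ell-1)r=t_0$) columns has rank $\le r-1$. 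Since rank only drops on restricting to fewer columns, I conclude that $\mathcal{E}_{\gamma,\ell}$ is contained in the event that $\rank(M_T)\le r-1$ for some set $T$ of exactly $t_0$ columns of $M$.

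The probabilistic input is then a union bound. Fix $T$ with $|T|=t_0$; interpreting ``random $\{\pm1\}$-valued'' as i.i.d.\ uniform entries, the rows of $M_T$ are i.i.d.\ uniform in $\{\pm1\}^{t_0}$. If $\rank(M_T)\le r-1$ then some set $S$ of $r-1$ of its rows has span containing all $m$ rows; conditioning on the $S$-rows, their span $W\subseteq\R^{t_0}$ has dimension $\le r-1$, and each of the other $m-r+1$ rows lies in $W$ with conditional probability at most $2^{r-1}/2^{t_0}\le 2^{-(1-\gamma)t_0}$ by \cref{odlyzko} (here I use $r-1\le\gamma m\le\gamma t_0$, which is exactly where $t_0>m$ is needed). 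Conditional independence and union bounds over the $\le 2^{m}$ choices of $S$ and the $\le 2^{n'}$ choices of $T$ give
\[
\Pr[\mathcal{E}_{\gamma,\ell}]\;\le\;2^{n'+m}\,2^{-(1-\gamma)t_0(m-r+1)}\;\le\;2^{n'+m}\,2^{-(1-\gamma)^2 m t_0},
\]
using $m-r+1\ge(1-\gamma)m$. Finally I would unwind the constants: $t_0\ge n'-\ell\gamma m-\ell$ gives $-(1-\gamma)^2 m t_0\le -(1-\gamma)^2 mn'+(1-\gamma)^2\gamma\ell m^2+(1-\gamma)^2\ell m$, and then $(1-\gamma)^2\gamma\ell m^2\le(1-\gamma)^2(m^2\ell+m^2)$ together with $(1-\gamma)^2\ell m+n'+m=O(n')$ in the non-trivial regime (where $\ell m\le n'$) yields the claimed inequality.

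I expect the main obstacle to be organizational rather than mathematical: one has to define $t_0$ so that the greedy reduction genuinely holds for every column set that can survive the procedure, and to confirm that the non-triviality of the target bound places us in the regime $t_0>m$ where the crude estimate $2^{r-1-t_0}\le 2^{-(1-\gamma)t_0}$ is affordable. Once these two points are pinned down, the rest is a routine union bound on top of \cref{odlyzko}.
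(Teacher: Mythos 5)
Your proof is correct, but it takes a genuinely different route from the paper's. The paper fixes a specific partition of the columns into $t:=\lfloor n'/m\rfloor$ consecutive $m\times m$ blocks $A_1,\dots,A_t$, observes that if $M$ admits no $(\gamma m,\ell)$-rank partition then fewer than $\ell$ of these blocks can have rank $>\gamma m$, and then exploits the \emph{independence} of the disjoint blocks together with the auxiliary \cref{lemma: large rank for 1} (itself proved via \cref{odlyzko}) and a binomial tail estimate. You instead run a greedy column-selection argument to show that $\mathcal{E}_{\gamma,\ell}$ forces some set of $t_0=n'-(\ell-1)\lceil\gamma m\rceil$ columns to span a space of dimension $<\lceil\gamma m\rceil$, and then union-bound over all $\le 2^{n'}$ column subsets $T$ and all $\le 2^{m}$ choices of a spanning row set $S$, applying \cref{odlyzko} once per row. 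Both proofs reduce to the same Odlyzko-type estimate, but the paper's argument is more modular: it decouples the combinatorial reduction (``few bad blocks'') from the rank estimate (\cref{lemma: large rank for 1}, which is reused elsewhere) and avoids any union bound over column subsets by working with a fixed partition; independence then does the heavy lifting. Your argument trades a fixed partition for a greedy one, which buys a slightly sharper secondary term ($(1-\gamma)^2\gamma\ell m^2$ in place of $(1-\gamma)^2\ell m^2$) but pays a $2^{n'}$ union-bound factor, which is harmless here since it is absorbed into the $O(n')$ error. Your observation that non-triviality of the target bound forces $\ell m< n'$ (hence $t_0>m$, making $r-1\le\gamma m\le\gamma t_0$ and making $2^{n'+m}$ and $(1-\gamma)^2\ell m$ all $O(n')$) is exactly the bookkeeping needed, and is handled correctly.
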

The proof of this proposition is based on the following lemma, which follows easily from Odlyzko's lemma (\cref{odlyzko}). 

\begin{lemma}\label{lemma: large rank for 1}
Let $0 < \gamma < 1$ be fixed, and let $M$ be a random $m\times m$ $\{\pm 1\}$-valued random matrix. Then,
$$\Pr[\rank(M)\leq \gamma m]\leq 2^{-(1-\gamma)^2m^2+O(m)}.$$
\end{lemma}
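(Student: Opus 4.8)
The plan is to count the $m\times m$ matrices with entries in $\{\pm 1\}$ whose rank is at most $\gamma m$ and then divide by the total number $2^{m^2}$ of such matrices; the only non-elementary ingredient is Odlyzko's lemma. First I would set $r:=\lfloor \gamma m\rfloor$ and suppose $M$ is an $m\times m$ $\{\pm1\}$ matrix with $\rank(M)\le r$. Since the row space of $M$ has dimension at most $r$, one can choose a set $T\subseteq[m]$ of exactly $r$ rows whose span equals the full row space of $M$ (take a maximal linearly independent subset of the rows, of size $\le r$, and enlarge it arbitrarily to size $r$). Writing $W_T$ for the span of the rows indexed by $T$, we have $\dim W_T\le r$, so $|W_T\cap\{\pm1\}^m|\le 2^r$ by Odlyzko's lemma (\cref{odlyzko}), and every row of $M$ — in particular every row not indexed by $T$ — lies in $W_T\cap\{\pm1\}^m$.

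This description yields an over-count of the rank-$\le r$ matrices: choose $T$ (at most $\binom{m}{r}$ ways), choose the $r$ rows indexed by $T$ freely in $\{\pm1\}^m$ ($2^{rm}$ ways), and choose each of the remaining $m-r$ rows inside $W_T\cap\{\pm1\}^m$ (at most $(2^{r})^{m-r}$ ways). Hence the number of $m\times m$ $\{\pm1\}$ matrices of rank at most $r$ is at most
$$\binom{m}{r}\,2^{rm}\,2^{r(m-r)}=\binom{m}{r}\,2^{2rm-r^2}\le 2^{m}\cdot 2^{(2\gamma-\gamma^2)m^2+O(m)}=2^{\left(1-(1-\gamma)^2\right)m^2+O(m)},$$
where I used $\binom{m}{r}\le 2^{m}$ and $2rm-r^2=(2\gamma-\gamma^2)m^2+O(m)$, the $O(m)$ absorbing the effect of replacing $r=\lfloor\gamma m\rfloor$ by $\gamma m$ in the exponent. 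Dividing by $2^{m^2}$ gives
$$\Pr[\rank(M)\le \gamma m]\le 2^{\left(1-(1-\gamma)^2\right)m^2+O(m)-m^2}=2^{-(1-\gamma)^2 m^2+O(m)},$$
which is exactly the asserted bound.

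I do not expect any real obstacle here: the argument is essentially elementary counting on top of \cref{odlyzko}. The only points requiring a modicum of care are that the over-counting coming from the (non-unique) choice of the spanning set $T$ is harmless since we only want an upper bound, and that the passage from the integer $r=\lfloor\gamma m\rfloor$ to $\gamma m$ in the exponent contributes only a lower-order $O(m)$ term — both of which are routine. The slightly less trivial companion statement \cref{lemma:large rank for many} will then be deduced from this lemma by a union bound over the ways in which a $(\gamma m,\ell)$-rank partition can fail, but that is a separate step.
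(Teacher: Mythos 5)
Your proof is correct and takes essentially the same approach as the paper's: choose a set of rows spanning the row space (paying a $\binom{m}{r}$ factor), apply Odlyzko's lemma to bound the choices for the remaining rows, and arrive at the exponent $-(m-r)^2+O(m)$. The only cosmetic differences are that you phrase it as a count rather than a conditional-probability argument, and you avoid the paper's sum over the exact rank $s$ by padding a maximal independent set to size $r=\lfloor\gamma m\rfloor$; the underlying combinatorics is identical.
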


\begin{proof}
For any integer $1\leq s\leq m$, let $\mathcal{R}_s$ denote the event that $\rank(M)=s$. Since 
$$\Pr[\rank(M)\leq \gamma m] = \Pr\left[\bigvee_{s=1}^{m}\mathcal{R}_{s}\right]\leq \sum_{s=1}^{\gamma m}\Pr[\mathcal{R}_s],$$
it suffices to show that $\Pr[\mathcal{R}_s]\leq 2^{-(1-\gamma)^{2}m^{2}+O(m)} $ for all $s\in [\gamma m]$. To see this, note by symmetry that 
$$\Pr[\mathcal{R}_s]\leq {m \choose s}\Pr\left[\mathcal{R}_s \wedge \mathcal{I}_{[s]}\right],$$
where $\mathcal{I}_{[s]}$ is the event that the first $s$ rows of $M$ are linearly independent. Moreover, letting $r_{[s+1,n]}(M)$ denote the set $\{r_{s+1}(M),\dots,r_m(M)\}$ of the last $m-s$ rows of $M$, and $V_s$ denote the random vector space spanned by the first $s$ rows of $M$, we have: 
\begin{eqnarray*}
\Pr\left[\mathcal{R}_{s}\wedge\mathcal{I}_{[s]}\right] & \leq & \Pr\left[r_{[s+1,n]}\subseteq V_s\right]\\
 & = & \sum_{v_{1},\dots,v_{s}\in\{\pm1\}^{m}}\Pr\left[r_{[s+1,n]}(M)\subseteq V_s|r_{i}(M)=v_{i},1\leq i\leq s\right]\Pr\left[r_{i}(M)=v_{i},1\leq i\leq s\right]\\
 & = & \sum_{v_{1},\dots,v_{s}\in\{\pm1\}^{m}}\left(\prod_{j=s+1}^{m}\Pr\left[r_{j}(M)\subseteq V_s|r_{i}(M)=v_{i},1\leq i\leq s\right]\right)\Pr\left[r_{i}(M)=v_{i},1\leq i\leq s\right]\\
 & \leq & \sum_{v_{1},\dots,v_{s}\in\{\pm1\}^{m}}2^{(s-m)(m-s)}\Pr\left[r_{i}(M)=v_{i},1\leq i\leq s\right]\\
 & = & 2^{-(m-s)^{2}},
\end{eqnarray*}
where the second line follows from the law of total probability; the third line follows from the independence of the rows of the matrix $M$; and the fourth line follows from Odlyzko's lemma (\cref{odlyzko}) along with the fact that conditioning on the values of $r_1(M),\dots,r_s(M)$ fixes $V_s$ to be a subspace of dimension at most $s$.   

Finally, since $m-s\geq (1-\gamma)m$ and ${m \choose s} \leq 2^{m}$, we get the desired conclusion.  
\end{proof}

\begin{proof}[Proof of \cref{lemma:large rank for many}] 
For each $i\in [t]$, where $t = \lfloor n'/m\rfloor$, let $A_i$ denote the $m\times m$ submatrix of $M$ consisting of the columns $c_{(i-1)m+1}(M),\dots,c_{im}(M)$. Then, 
$$\Pr[\mathcal{E}_{\gamma,\ell}] \leq \Pr\left[|\{i\in [t]:\rank(A_i)\leq \gamma m\}|> t-\ell \right].$$
By \cref{lemma: large rank for 1}, we have for each $i\in [t]$ that
$$\Pr[\rank(A_i)\leq \gamma m]\leq 2^{-(1-\gamma)^2m^2+O(m)}.$$
Therefore, since the entries of the different $A_i$'s are independent, the probability of having more than $t-\ell$ indices $i\in [t]$ for which $\rank(A_i)\leq \gamma m$ is at most:
\begin{eqnarray*}
\sum_{k=t-\ell+1}^{t}{t \choose k}\left(2^{-(1-\gamma)^{2}m^{2}+O(m)}\right)^{k} & \leq & t2^{t}2^{-(1-\gamma)^{2}m^{2}(t-\ell)+O(tm)}\\
 & \leq & 2^{-(1-\gamma)^{2}m^{2}t+(1-\gamma)^{2}m^{2}\ell+O(tm)}\\
 & \leq & 2^{-(1-\gamma)^{2}mn'+(1-\gamma)^{2}(m^{2}\ell+m^{2})+O(n')},
\end{eqnarray*}
which completes the proof.
\end{proof}

We need one final piece of notation. For $1\leq k \leq n$, we define the set of $k$-partial matrices -- denoted by $\mathcal{P}_{k}$ -- to be $\{\pm 1, \ast\}$-valued matrices of the form \cref{eqn:M_k}. For any $n\times n$ $\{\pm1\}$-valued matrix $M$, let $M_k$ denote $k$-partial matrix obtained by restricting $M$. For any $1\leq k \leq n$ and any $n\times n$ matrix $N$, we define: 
$$\mathcal{S}_{k}(N):= \{P\in \mathcal{P}_k:P=M_k\text{ for some }M\text{ which is }N\text{-normal}\}.$$
In words, $\mathcal{S}_k(N)$ denotes all the possible $k$-partial matrices arising as restrictions of $N$-normal matrices. The following proposition is the main result of this section. 
\begin{proposition}
\label{prop-normal-small-improvement}
There exist absolute constants $\beta,\delta>0$ such that for any $n\times n$ matrix $N$, 
$$|\mathcal{S}_{\beta n}(N)|\leq 2^{(2\beta-\beta^2)n^2-\delta n^2+o(n^2)}.$$ 
\end{proposition}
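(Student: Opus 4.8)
The plan is to count the $\beta n$-partial matrices $P \in \mathcal{S}_{\beta n}(N)$ by building them up step by step, exactly mirroring the construction described before the proposition (Steps $0$ through $\beta n$), and bounding the number of choices at each step. At Step $0$ we pick the $n$ diagonal entries, contributing a harmless factor $2^n$. At Step $k+1$ (for $0 \le k \le \beta n - 1$) we must choose the pair $(r_{k+1}(B_{k+1}), c_{k+1}(C_{k+1}))$, i.e.\ the vector $x_k \in \{\pm 1\}^{2(n-k-1)}$, subject to the linear constraint $T_k x_k = N'_k$ from \cref{key linear equation}, where $T_k$ is a $k \times 2(n-k-1)$ matrix whose entries are $\{\pm 1\}$-valued (they are previously-chosen entries of $M$). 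The trivial count gives $2^{2(n-k-1)}$ choices at Step $k+1$, which reproduces the bound $2^{(2\beta - \beta^2)n^2 + o(n^2)}$; the whole point is to save a constant fraction in the exponent by exploiting the constraint $T_k x_k = N'_k$ via our Hal\'asz-type inequality, \cref{thm:halasz-int}.

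The first key step is to argue that for a suitable constant window $k \in [c_1 n, c_2 n] \cap [0, \beta n]$ (choosing $\beta$ so this window is non-empty and of length $\Theta(n)$), the matrix $T_k$ admits a $(\lfloor k/2 \rfloor, \ell)$-rank partition with $\ell = \Theta(\sqrt{n/k})$, just as in the Hadamard argument. Here is the one subtlety compared with \cref{prop:rank-partition-hadamard}: there we used $\det(T_k T_k^T)$ being large (which came from orthogonality of the rows of $H_{k,n}$), but here $T_k$ is an essentially arbitrary $\{\pm 1\}$ matrix coming from a partially-built normal matrix, with no orthogonality to exploit. So instead I would invoke \cref{lemma:large rank for many}: since $T_k$ is $k \times 2(n-k-1)$ with $k = \Theta(n)$, the probability over a uniformly random such matrix that it fails to admit a $(\gamma k, \ell)$-rank partition is $2^{-(1-\gamma)^2 \cdot 2k(n-k-1) + o(n^2)}$, which is exponentially small in $n^2$. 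The point is that the number of $\{\pm 1\}$-valued candidates for $T_k$ is only $2^{2k(n-k-1)}$, so by a union bound, all but a $2^{-\Omega(n^2)}$-fraction of partial matrices $P$ have every $T_k$ (for $k$ in our window) admitting the desired rank partition. We bound the "bad" partial matrices trivially (they contribute $2^{(2\beta-\beta^2)n^2 - \Omega(n^2)}$), and for the "good" ones we get to use \cref{thm:halasz-int}.

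The second key step is the actual counting for good partial matrices. For a good $P$ and $k$ in the window, once Steps $0, \dots, k$ are fixed, $T_k$ and $N'_k$ are determined, and the number of valid $x_k \in \{\pm 1\}^{2(n-k-1)}$ equals $2^{2(n-k-1)} \cdot \Pr_{\epsilon}[T_k \epsilon = N'_k]$ where $\epsilon$ is uniform on $\{\pm 1\}^{2(n-k-1)}$ (after accounting for the fixed signs appropriately — note $x_k$ has the form $(r_{k+1}^T(B_{k+1}), -c_{k+1}(C_{k+1}))$, so its coordinates are still independent uniform $\pm 1$ up to sign flips, which do not affect the count). By \cref{thm:halasz-int} applied to the columns of $T_k$ with the rank partition into $\ell = \Theta(\sqrt{n/k})$ parts each of rank $\ge k/2$, this probability is at most $(2^{-\ell}\binom{\ell}{\ell/2})^{(\ell \cdot (k/2))/\ell} = (2^{-\ell}\binom{\ell}{\ell/2})^{k/2} \le (C/\sqrt{\ell})^{k/2} = 2^{-\Omega(k \log(n/k))} \le 2^{-\Omega(k)}$. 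Multiplying over the $\Theta(n)$ values of $k$ in the window gives a total saving of $2^{-\sum_{k \in \text{window}} \Omega(k)} = 2^{-\Omega(n^2)}$ on top of the trivial bound $\prod_{k=0}^{\beta n - 1} 2^{2(n-k-1)} \cdot 2^n = 2^{(2\beta - \beta^2)n^2 + o(n^2)}$. Combining the good and bad contributions gives $|\mathcal{S}_{\beta n}(N)| \le 2^{(2\beta-\beta^2)n^2 - \delta n^2 + o(n^2)}$ for an absolute $\delta > 0$, with $\beta$ and the window chosen so that all the $\Theta(n)$-length-window and $k \le n/15000$-type constraints from \cref{thm:halasz-int} are met.

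I expect the main obstacle to be bookkeeping the union bound correctly: I must bound the number of \emph{bad} $\beta n$-partial matrices, not the number of bad matrices $T_k$, so I need to sum over $k$ in the window the (number of ways to fill Steps $0, \dots, k$) times (probability that the resulting $T_k$ is bad) times (number of ways to complete Steps $k+1, \dots, \beta n$), and check that the $2^{-\Omega(n^2)}$ gain from "$T_k$ bad" beats the at-most-$2^{(2\beta-\beta^2)n^2+o(n^2)}$ cost of the remaining free choices — which it does, since $(1-\gamma)^2 \cdot 2k(n-k-1)$ with $k = \Theta(n)$ is $\Omega(n^2)$ with a constant one can make exceed any fixed multiple of $n^2$ by... actually no, the constant is fixed by $\gamma$ and the window, so I should rather argue: fix $k^\star$ to be a single representative value in the window (say the midpoint), only demand the rank partition property of $T_{k^\star}$, and split $\mathcal{S}_{\beta n}(N)$ according to whether $T_{k^\star}$ is good or bad; this avoids the delicate multi-$k$ union bound entirely, at the cost of a smaller (but still $\Omega(n^2)$) saving coming from a single value of $k$. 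I would carry out this cleaner single-$k^\star$ version. The remaining routine points — verifying the numerical hypotheses of \cref{prop:rank-partition-hadamard}/\cref{lemma:large rank for many} and of \cref{thm:halasz-int}, and choosing $\beta, c_1, c_2, \gamma, \delta$ — are straightforward constant-chasing analogous to the Hadamard proof.
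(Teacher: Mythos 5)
Your initial (multi-$k$) sketch is essentially the paper's proof: split $\mathcal{S}_{\beta n}(N)$ according to whether some $T_k$ with $k$ in a window of length $\Theta(n)$ fails to admit a suitable rank partition, count the failures via \cref{lemma:large rank for many}, and use \cref{thm:halasz-int} at every step $k$ in the window for the good case. But the ``cleaner single-$k^\star$ version'' you opt for at the end does not work, and the claim that it ``still gives $\Omega(n^2)$ saving'' is false. The Hal\'asz gain at a single step $k^\star$ is capped at $O(k^\star) = O(n)$: the number of $\{\pm1\}$ solutions to $T_{k^\star}x=N'_{k^\star}$ is trivially at least $2^{2(n-k^\star-1)-\rank(T_{k^\star})} \ge 2^{2(n-k^\star-1)-k^\star}$ (solutions modulo $2$ of a rank-$\le k^\star$ system, intersected with those whose coordinates one can flip to satisfy the real equation, is a standard lower bound; even just by dimension count, one cannot save more than $\rank(T_{k^\star})\le k^\star$ bits at step $k^\star$). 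Concretely, with $k^\star=\Theta(n)$ one is forced into $\ell_{k^\star}=O(1)$, and $\bigl(2^{-\ell}\binom{\ell}{\ell/2}\bigr)^{\gamma k^\star}=2^{-\Theta(k^\star)}=2^{-\Theta(n)}$, not $2^{-\Theta(n^2)}$. So the good-case bound in the single-$k^\star$ split is only $2^{(2\beta-\beta^2)n^2-\Theta(n)}$, which is not enough for the proposition.

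The union bound you were worried about is harmless, and reverting to your first version resolves the gap. For each fixed $k$ in the window $[\beta n/2,\beta n]$, the number of $\beta n$-partial matrices whose $T_k$ fails the rank-partition property is at most (number of bad $k\times n'$ $\{\pm1\}$ matrices, i.e.\ $2^{kn'-(1-\gamma)^2 kn'/4 + O(n')}$ by \cref{lemma:large rank for many}) times (trivial count $2^{(2\beta-\beta^2)n^2 - kn' + O(n)}$ for the remaining free entries), giving $2^{(2\beta-\beta^2)n^2 - (1-\gamma)^2 kn'/4 + O(n)}$. Since $k=\Theta(n)$ and $n'=\Theta(n)$, this is already $2^{(2\beta-\beta^2)n^2-\Omega(n^2)}$ for each $k$; summing over the $O(n)$ values of $k$ only contributes a factor $2^{O(\log n)}$, absorbed into $o(n^2)$. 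In the complementary (good) case every $T_k$ in the window admits the rank partition, and multiplying the per-step Hal\'asz saving $2^{-\Theta(k\log\ell_k)}=2^{-\Theta(k)}$ over all $\Theta(n)$ steps in the window yields the required $2^{-\Omega(n^2)}$. This is exactly what the paper does; note also that the paper uses the rank threshold $\gamma k$ (not $\lfloor k/2\rfloor$) and $\ell_k = n'/(2k) = \Theta(1)$ (not $\Theta(\sqrt{n/k})$); these parameter choices do not affect the conclusion, but it is worth matching them to avoid confusion with the Hadamard case where the source of the rank partition is entirely different.
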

Given this proposition, it is immediate to obtain a non-trivial upper bound on the number of $\{\pm 1\}$-valued $N$-normal matrices. Indeed, any $N$-normal matrix must be an extension of a matrix in $\mathcal{S}_{\beta n}(N)$; on the other hand, any matrix in $\mathcal{S}_{\beta n}(N)$ can be extended to at most $2^{(1-\beta)^2n^2}$ $N$-normal matrices (as $D_{\beta n}$ is an $n(1-\beta)\times n(1-\beta)$ $\{\pm 1\}$-valued matrix). Hence, the number of $N$-normal matrices is at most $2^{(2\beta-\beta^2)n^2-\delta n^2+(1-\beta)^2n^2}=2^{(1-\delta)n^2+o(n^2)}$.  
\begin{proof}
For any $m$-partial matrix $P$ and for any $1\leq k \leq m$, let $T_k(P)$ denote the $k \times 2(n-k-1)$ matrix obtained from $P$ as in \cref{key linear equation}. We will estimate the size of $\mathcal{S}_{\beta n}(N)$ by considering the following two cases. 

First, we bound the number of partial matrices $P$ in $\mathcal{P}_{\beta n}$ such that for some $\beta n/2 \leq k \leq \beta n$, $T_{k}(P)$ does not admit a $(\gamma k,\ell_k)$-rank-partition, where $\ell_k=n'/2k, n'=2(n-k-1)$, and $0<\gamma <1$ is some constant to be chosen later. For this, note that \cref{lemma:large rank for many} shows that there are at most 
$$2^{kn'-(1-\gamma)^2kn'+(1-\gamma)^2(k^2\ell_k + k^{2})+O(n')}=2^{kn'-(1-\gamma)^{2}kn'/4+O(n')}$$ choices for such a $T_k(P)$, provided $k < n'/4$, which holds for (say) $\beta < 1/4$. Since the remaining unknown entries of $P$ which are not in $T_{k}(P)$ are $\{\pm 1\}$-valued, this shows that the number of $\beta n$ partial matrices satisfying this first case is bounded above by 
$$2^{(2\beta-\beta^2)n^2-n^{2}(1-\gamma)^{2}\beta(1-\beta)/4+o(n^2)},$$
for all $\beta < 1/4$. 
 
Second, we bound the number of partial matrices $P \in \mathcal{S}_{\beta n}(N)$ which have the additional property that $T_k(P)$ admits a $(\gamma k, \ell_k)$-rank-partition for all $\beta n/2 \leq k \leq \beta n$. In this case, \cref{thm:halasz-int} shows that for any $\beta n/2 \leq k \leq \beta n$, the number of $\{\pm 1\}$-valued solutions 
to \cref{key linear equation} $T_k(P)x_k=N'$
is at most 
\begin{equation} \label{eq:estimate for number of solutions in one step}2^{2(n-k-1)}\ell_k^{-\gamma k/2}\leq 2^{2(n-k)-\frac{\gamma k}{2}\log_{2}\frac{n}{2k}},\end{equation}
where in the last inequality, we have used $2(n-k-1)\geq n$ for all $k\leq \beta n$, which is certainly true for $\beta < 1/4$.  
In other words, for a fixed $T_k(P)$, there are at most $2^{2(n-k-1)-\frac{\gamma k}{2}\log_{2}\frac{n}{2k}}$ ways to extend it to $T_{k+1}(P')$ for some $P'\in \mathcal{S}_{\beta n}(N)$. Hence, it follows that the number of matrices in $\mathcal{S}_{\beta n}(N)$ with this additional property (stated at the beginning of the paragraph) is at most:
  $$2^{(2\beta-\beta^2)n^2-\sum_{k=\beta n/2}^{\beta n}\frac{\gamma k}{2}\log \frac{n}{2k}}\leq 2^{(2\beta-\beta^2)n^2-n^{2}\gamma \beta^{2}\log_{2}(1/2\beta)/8+o(n^2)},$$
for $\beta < 1/4$.    
Combining these two cases completes the proof. 
\end{proof}
\begin{remark}
\label{rmk:value-of-delta}
In particular, if we take $\gamma = 3/4$, it follows that for $\beta$ sufficiently small (say $\beta \leq 2^{-10}$), we can take $\delta \geq \beta^{2}$.  
\end{remark}

\section{Acknowledgements}
V.J. would like to thank Ethan Yale Jaffe for his insightful comments. A.F. and Y.Z. would like to thank Gwen McKinley, Guy Moshkovitz, and Clara Shikhelman for helpful discussions at the initial stage of this project.
\bibliographystyle{plain}
\bibliography{hadamard}

\appendix
\section{Completing the proof of \cref{thm:normal}}
\label{appendix-completing-normal-proof}
We now show how to combine the strategy of Deneanu and Vu with \cref{subsec-proof-normality} in order to prove \cref{thm:normal}. We begin with a few definitions. 

\begin{definition}
Let $S_n$ denote the symmetric group on $n$ letters. For any $\sigma\in S_n$ and for any $n\times n$ matrix $M$, we define 
$$M_{\sigma}:=P_{\sigma}MP_{\sigma}^T,$$
where $P_{\sigma}$ is the permutation matrix representing $\sigma$. In other words, $M_{\sigma}$ is the matrix obtained from $M$ by permuting the row and columns according to $\sigma$. 
\end{definition}

The previous definition motivates the following equivalence relation $\sim$ on the set of $n\times n$ matrices: given two $n\times n$ matrices $M$ and $M'$, we say that $M \sim M'$ if and only if there exists $\sigma \in S_n$ such that $M' = M_\sigma$. The next definition isolates a notion of normality which is invariant under this equivalence relation.     

\begin{definition}
Let $N$ be a fixed $n\times n$ matrix. We say that an $n\times n$ matrix $M$ is $N$-\emph{normal-equivalent} if and only if there exists some $\sigma\in S_n$ such that $MM^T-M^TM=N_{\sigma}.$
\end{definition}
By definition, it is clear that for any $N$ and for any $M \sim M'$, $M$ is $N$-normal-equivalent if and only if $M'$ is $N$-normal equivalent. On the other hand, as we will see below, one can find a permutation $\rho_{M}$ for any matrix $M$ such that for the matrix $M':=M_{\rho_{M}}$, the ranks of many of the matrices $T_k(M'), 1\leq k\leq n$ are large, where $T_k(M')$ denotes the matrix from \cref{key linear equation}. Therefore, by Odlyzko's lemma, we will be able to obtain good upper bounds on the probability of the random matrix $M':=M_{\rho_M}$ being $C$-normal, for any fixed $C$, which then translates to an upper bound on the probability of $N$-normality of $M$ as follows: for any fixed $N$,
\begin{eqnarray*}
\Pr\left[M\text{ is }N\text{-normal}\right] & \leq & \Pr\left[M\text{ is \ensuremath{N}-normal-equivalent}\right]\\
 & = & \Pr\left[M_{\rho_{M}}\text{ is \ensuremath{N}-normal-equivalent}\right]\\
 & \leq & \sum_{\sigma\in S_{n}}\Pr\left[M_{\rho_{M}}\text{ is $N_{\sigma}$-normal}\right]\\
 & \leq & n!\sup_{\sigma\in S_{n}}\Pr\left[M_{\rho_{M}}\text{ is $N_{\sigma}$-normal}\right]\\
 & \leq & 2^{o(n^{2})}\sup_{C\in\mathcal{M}_{n\times n}}\Pr\left[M_{\rho_{M}}\text{ is \ensuremath{C}-normal}\right],
\end{eqnarray*}
where $\mathcal{M}_{n\times n}$ denotes the set of all $n\times n$ matrices, and we have used the fact that $n!=2^{o(n^2)}$. Hence, it suffices to provide a good \emph{uniform} upper bound on the probability that the random matrix $M_{\rho_M}$ is $N$-normal for any fixed $N$. 

To make the special property of the matrix $M_{\rho_M}$ precise, we need the following functions, defined for all integers $1\leq s\leq t\leq n$:

$$R_{s,t}(i):=\left\{\begin{array}{cc} i & \text{ if }0<i\leq s\\
s &\text{ if }s<i\leq t\\
s+t-i &\text{ if }t<i\leq 2n-s-t\\
2n-2i & \text{ if }2n-s-t<i\leq n. \end{array}\right.$$

The next proposition is one of the key ideas in the proof of Deneanu and Vu. 
\begin{proposition}[Permutation Lemma, Lemma 3.5 in \cite{deneanu2017random}] Let $M$ be any (fixed) $n\times n$ matrix. Then, there exist $s,t\in \mathbb{N}$ and $\rho_M\in S_n$ such that $M_{\rho_M}$ satisfies:
  $$\rank(T_i(M_{\rho_M}))=R_{s,t}(i) \text{ for all }1\leq i\leq n.$$
\end{proposition}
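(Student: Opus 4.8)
The plan is to work directly with the rows and columns of $M$. Writing $\rho=\rho_M$ for the permutation we are constructing, $M_\rho := P_\rho M P_\rho^T$, and, for $0\le i\le n$, $P_i:=\{\rho(1),\dots,\rho(i)\}$ and $Q_i:=\{\rho(i+2),\dots,\rho(n)\}=[n]\setminus(P_i\cup\{\rho(i+1)\})$, one reads off from the definition of $T_k$ preceding \cref{key linear equation} that
\[
\rank\bigl(T_i(M_\rho)\bigr)=\dim\Span\{\Phi(j;Q_i):j\in P_i\},
\]
where $\Phi(j;Q)\in\R^{2|Q|}$ is the concatenation of the $j^{th}$ row of $M$ and the $j^{th}$ column of $M$, each restricted to the coordinates indexed by $Q$. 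Two facts are then immediate: the a priori bounds $\rank(T_i)\le i$ and $\rank(T_i)\le 2|Q_i|=2(n-i-1)$; and that passing from stage $i$ to stage $i+1$ adjoins the single vector $\Phi(\rho(i+1);Q_{i+1})$ while deleting two coordinates (those indexed by $\rho(i+2)$, one from each half) from every other vector, so that
\[
\rank\bigl(T_{i+1}(M_\rho)\bigr)-\rank\bigl(T_i(M_\rho)\bigr)\in\{-2,-1,0,1\}
\]
for \emph{every} permutation $\rho$. Since, up to the displayed boundary offsets, $R_{s,t}$ is precisely the pointwise minimum of the four affine functions $i$, $\,s$, $\,s+t-i$ and $2(n-i-1)$ --- the first and last being exactly the two a priori bounds --- it suffices to exhibit a $\rho$ realizing each of these with equality on its range.

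I would build $\rho$ greedily, picking $\rho(1),\rho(2),\dots$ one at a time so as to keep the running profile $i\mapsto\rank(T_i(M_\rho))$ lexicographically as large as possible; this organizes itself into three phases. \textbf{Phase A:} while possible, choose the next index so that $\rank(T_i)=i$, and let $s$ be the first position at which this fails. \textbf{Phase B:} continue, now choosing the next index so as to keep $\rank(T_i)=s$, and let $t$ be the first position at which this fails; then the profile equals $\min(i,s)$ for all $i\le t$, and $s=\max_i\rank(T_i(M_\rho))$. \textbf{Phase C:} order the remaining indices greedily as well; here one must show that for $i>t$ the profile decreases, tracking $s+t-i$ over a middle window and then the column bound $2(n-i-1)$ down to $0$. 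The lower bounds here (that the rank does not drop faster than claimed) come from always making the locally best choice; the upper bounds come from $\rank(T_i)\le\min(s,2(n-i-1))$ together with the fact that, past position $t$, the shrinking ambient space can no longer support the previous rank. Verifying that the decay is by exactly $1$ per step on the middle window, and by exactly $2$ thereafter, and that the plateau value is exactly $s$, is where the real computation lies.

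The step I expect to be the main obstacle is the \emph{consistency} of the Phase-A/Phase-B greedy, and with it the well-definedness of the thresholds $s$ and $t$ (independence of the arbitrary tie-breaking): each committed index does double duty --- it supplies the two coordinates deleted when we evaluate $\rank(T_i)$ and the vector adjoined when we evaluate $\rank(T_{i+1})$ --- so the myopically optimal choice need not be compatible with holding every later rank on the target profile. I would handle this by an exchange argument: show that any ordering can be improved, by transposing two indices not yet separated into the $P$-part and the $Q$-part, so as to push the ``more independent'' rows/columns of $M$ toward the front without lowering the profile at any position, and that an ordering extremal under the resulting partial order realizes $R_{s,t}$ for the $s,t$ it produces. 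An argument of this type is carried out in \cite{deneanu2017random} (Lemma 3.5); only the statement of the lemma is needed below.
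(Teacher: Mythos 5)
The paper does not prove this proposition; it is imported verbatim as Lemma~3.5 of Deneanu and Vu \cite{deneanu2017random} and used as a black box, so there is no internal proof for your attempt to be compared against. Judged on its own terms, your mechanical setup is correct: writing $\Phi(j;Q)$ for the concatenation of the $j$th row and $j$th column of $M$ restricted to the index set $Q$, the identity $\rank\bigl(T_i(M_\rho)\bigr)=\dim\Span\{\Phi(j;Q_i):j\in P_i\}$ does follow from the definition of $T_k$ preceding \cref{key linear equation}; the a priori bounds $\rank(T_i)\le\min\bigl(i,\,2(n-i-1)\bigr)$ are just the two matrix dimensions; and passing from $T_i$ to $T_{i+1}$ deletes two coordinates from every existing row and appends one new row, so the step size lies in $\{-2,-1,0,1\}$. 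You are also right that, modulo the off-by-one in the last branch of $R_{s,t}$ (the paper's $2n-2i$ versus the column bound $2n-2i-2$; evidently a small difference in how $T_k$ is set up in \cite{deneanu2017random}), $R_{s,t}$ is the lower envelope of the two a priori bounds together with a plateau at height $s$ and a descending line $s+t-i$.

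The substance of the lemma, however, is exactly the point you flag and then defer on: one must exhibit a single ordering $\rho$ that achieves this piecewise-linear envelope with \emph{equality} at every $i$ simultaneously, with slopes $+1,0,-1,-2$ on the four ranges. Each committed index plays a dual role --- it contributes a new row once it enters the $P$-side and removes two coordinates while it sits on the $Q$-side --- so a myopically greedy Phase~A can obstruct Phases~B and~C, and the thresholds $s,t$ it produces need not be stable under tie-breaking, as you yourself observe. The exchange argument you sketch (transposing an un-separated pair without lowering the rank profile at any position) is the right genre of argument, but you do not carry it out, instead citing \cite{deneanu2017random} for it. So what you have is an accurate framing and a correct identification of where the difficulty lies, not a self-contained proof; to be fair, that is exactly the treatment the paper itself gives this lemma.
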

For a fixed matrix $N$, let $\mathcal{N}_{s,t}(N)$ denote the set of $\{\pm 1\}$-valued $n\times n$ matrices $M$ such that $M$ is $N$-normal, and $\rank(T_i(M)) = R_{s,t}(i)$ for all $i\in [n]$. Then, it follows from the previous proposition that $M_{\rho_{M}}$ is $N$-normal if and only if $M_{\rho_M} \in \bigcup_{1\leq s\leq t\leq n}\mathcal{N}_{s,t}(N)$. This, in turn, can happen only if $M$ itself is one of the at most $n!\sum_{s,t}|\mathcal{N}_{s,t}(N)|$ matrices obtained by permuting the rows and columns of $\mathcal{N}_{s,t}(N)$. Hence, it suffices to provide a good upper bound on $|\mathcal{N}_{s,t}(N)|$ \emph{uniformly} in $N, s$ and $t$. 

Deneanu and Vu note (Observation 3.7 in \cite{deneanu2017random}) that $\mathcal{N}_{s,t}$ is empty unless the following restrictions on $s$ and $t$ are met: 
\begin{itemize}
\item $1\leq s \leq 2n/3$, and
\item $\frac{s}{2} < n-t < s$.
\end{itemize}
Then, letting 
$$\beta := \sup\{c>0: |\mathcal{N}_{s,t}(N)|\leq 2^{-(c+o(1))n^{2}} \text{ for all } s,t,N\},$$
and for some small fixed (but otherwise arbitrary) $\epsilon > 0$, letting
$$\alpha := \beta -\epsilon,$$
they show (Lemmas 5.1 and 5.4 in \cite{deneanu2017random}) the following:
\begin{equation}
\label{eqn:bounds}
|\mathcal{N}_{s,t}(N)|\leq2^{n^{2}}\times\begin{cases}
\min\left(2^{g_{1}(n,s,t)+o(n^{2})},2^{f(\alpha,n,s,t)+o(n^{2})}\right) & s\leq\frac{n}{2}\\
\min\left(2^{g_{2}(n,k,t)+o(n^{2})},2^{f(\alpha,n,k,t)+o(n^{2})}\right) & s\geq\frac{n}{2}
\end{cases},
\end{equation}
where 
\begin{eqnarray*}
f(\alpha,n,s,t) & := & (1-\alpha)t^{2}-s^{2}/2-n^{2}+ns\\
g_{1}(n,s,t) & := & t^{2}-3s^{2}+2sn+st-2nt\\
g_{2}(n,s,t) & := & n^{2}+s^{2}+t^{2}+st-2sn-2nt.
\end{eqnarray*}
Finally, they analyze \cref{eqn:bounds} to obtain their bound on $\beta$. For this, they note that since for fixed $s$, both $g_1$ and $g_2$ are decreasing functions of $t$ while $f$ is an increasing function of $t$, the worst restrictions on $\beta$ (i.e. those requiring $\beta$ to be small) can only be obtained in one of the following six cases: 
\begin{enumerate}
\item $t = n-s$ and $s\leq n/2$, which places the restriction $\beta \leq 0.425$;
\item $t = n-s/2$ and $s \leq n/2$, which places the restriction $\beta \leq 0.307$;
\item $t = n-s/2$ and $s \geq n/2$, which places the restriction $\beta \leq 0.3125$;
\item $t=s$ and $s \geq n/2$, which places the restriction $\beta \leq 0.323$;
\item $f(\alpha,n,s,t) = g_2(n,s,t)$ and $s \geq n/2$, which places the restriction $\beta \leq 0.307$; and finally, 
\item $f(\alpha,n,s,t) = g_1(n,s,t)$ and $s \leq n/2$, which places the worst restriction $\beta \leq 0.302$.
\end{enumerate}
Hence, any improvement in Case 6 translates to an overall improvement in their bound. Moreover, note that for $1\leq s\leq\frac{n}{10}$,
Case 6 only leads to the restriction $\beta\leq0.7$. Therefore, it
suffices to improve Case 6 for $\frac{n}{10}\leq s\leq\frac{n}{2}$.
We will do this using \cref{prop-normal-small-improvement}. 

We start by showing how to deduce the upper bound $g_{1}(n,s,t)$, as in \cite{deneanu2017random}. For any
$0\leq k\leq n$, we define 
\[
\mathcal{S}_{k,(s,t)}(N):=\{P\in\mathcal{P}_{k}:P=M_{k}\text{ for some }M\in\mathcal{N}_{s,t}(N)\},
\]
where recall that $\mathcal{P}_{k}$ denotes the set of $k$-partial
matrices, and $M_{k}$ denotes the $k$-partial matrix associated to $M$. By definition, the number of ways to extend
any $k$-partial matrix in $\mathcal{S}_{k,(s,t)}(N)$ to a $(k+1)$-partial
matrix in $\mathcal{S}_{k+1,(s,t)}(N)$ is at most the number of $\{\pm1\}$-valued
solutions to : $T_{k}x_{k}=N'_{k}$, which is at most $2^{\max\{2(n-k-1)-\rank(T_{k}),0\}}$ by Odlyzko's lemma (\cref{odlyzko}). 
Hence, it follows that the total number of matrices in $\mathcal{N}_{s,t}(N)$
is at most 
\begin{eqnarray*}
|\mathcal{N}_{s,t}(N)| & \leq & 2^{o(n^{2})}\prod_{k=0}^{n}2^{\max\{2(n-k-1)-\rank(T_{k}),0\}}\\
 & = & 2^{o(n^{2})}\prod_{k=0}^{n}2^{\max\{2(n-k)-R_{s,t}(i),0\}}\\
 & = & 2^{g_{1}(n,s,t)+o(n^{2})},
\end{eqnarray*}
where the second equality follows from the definition of $\mathcal{N}_{s,t}(N)$
and the last equality follows by direct computation. 

To obtain our improvement, we note that above computation may be viewed
in the following two steps:
\begin{itemize}
\item $|\mathcal{N}_{s,t}(N)| \leq 2^{o(n^{2})}|\mathcal{S}_{\beta n,(s,t)}(N)|\prod_{k=\beta n+1}^{n}2^{\max\{2(n-k-1)-\rank(T_{k}),0\}}$, which is true for any $0< \beta < 1$
\item For $0< \beta < 1$ such that $\beta n < s$, 
\begin{eqnarray*}
|\mathcal{S}_{\beta n,(s,t)}(N)| & \leq & 2^{o(n^{2})}\prod_{k=0}^{\beta n}2^{\max\{2(n-k-1)-\rank(T_{k}),0\}}\\
 & = & 2^{o(n^{2})}\prod_{k=0}^{\beta n}2^{2(n-k)-\rank(T_{k})}\\
 & = & 2^{o(n^{2})}\prod_{k=0}^{\beta n}2^{2n-3k}\\
 & = & 2^{(2\beta-\beta^{2})n^{2}-\beta^{2}n^{2}/2+o(n^{2})}
\end{eqnarray*}
In particular, by our assumption on $s$, we know that this holds for (say) $\beta = 2^{-10}$. 
\end{itemize}
However, by \cref{prop-normal-small-improvement} and \cref{rmk:value-of-delta}, we already know that for $\beta = 2^{-10}$,
\begin{eqnarray*}
|\mathcal{S}_{\beta n,(s,t)}(N)| & \leq & |\mathcal{S}_{\beta n}(N)|\\
 & \leq & 2^{(2\beta - \beta^{2})n^{2} - \beta^{2}n^{2}+o(n^{2})}
\end{eqnarray*}
Using this improved bound in the previous computation, we get that
$|\mathcal{N}_{s,t}(N)|\leq2^{h(n,s,t)+o(n^{2})}$, where 
\[
h(n,s,t)=g_1(n,s,t)- \frac{\beta^{2}n^{2}}{2}.
\]
Hence, we have showed that Case 6 can be replaced by the following two cases:
\begin{enumerate}[\text{Case} 6.1]
\item $f(\alpha,n,s,t) = g_1(n,s,t)$ and $s\leq n/10$
\item $f(\alpha,n,s,t) = h(n,s,t)$ and $n/10 \leq s \leq n/2$,
\end{enumerate}
each of which place a restriction on $\beta$ which must be larger than the constant $c_{DV}$ obtained in \cite{deneanu2017random}. This completes the proof of \cref{thm:normal}.

\end{document}